\theoremstyle{plain}
\newtheorem{theorem}{Theorem}
\newtheorem{lemma}[theorem]{Lemma}
\theoremstyle{definition}
\theoremstyle{remark}
\newtheorem{remark}[theorem]{Remark}
\DeclareMathOperator{\Div }{div}
\def\pa{\partial}
\def\cal{\mathcal}
\let\mib=\boldsymbol
\def\R{{\mathbb R}}
\def\N{{\mathbb N}}
\def\eps{\varepsilon}
\def\malpha{{\mib \alpha}}
\def\mx{{\bf x}}
\def\mxi{{\mib \xi}}
\def\malpha{{\mib \alpha}}
\def\mff{{\mathfrak f}}
\newcommand{\tensoroo}{\mathcal{T}^1_1}
\newcommand{\supp}{\mathrm{supp}\,}
\newcommand{\lara}[2]{\langle #1, #2 \rangle}
\newcommand{\grad}{\mathrm{grad}}
\newcommand{\vphi}{\varphi}
\begin{document}

\title[Galerkin-type methods]{Galerkin-type methods for strictly parabolic equations on compact Riemannian manifolds}
\author{M.\ Graf}\address{Melanie Graf, University of T\"ubingen, 
Department of Mathematics}\email{graf@math.uni-tuebingen.de}
\author{M.\ Kunzinger}\address{Michael Kunzinger, University of Vienna, 
Faculty of Mathematics}\email{michael.kunzinger@univie.ac.at}
\author{D.\ Mitrovic}\address{Darko Mitrovic, University of
Vienna, Faculty of Mathematics}\email{darko.mitrovic@univie.ac.at}

\subjclass[2010]{35K65, 42B37, 76S99}

\keywords{parabolic equations, Cauchy problem on a Riemannian manifold, Galerkin method}

\begin{abstract}
We prove existence of weak solutions to the Cauchy problem corresponding to various strictly parabolic equations 
on a compact Riemannian manifold $(M,g)$. This also includes strictly parabolic equations with stochastic forcing with linear diffusion. Existence is proved through a variant of the Galerkin method and can be used to construct a convergent finite element method.
\end{abstract}
\maketitle

\section{Introduction}

The main subject of this article is the Cauchy problem for a strictly parabolic equation of the form
\begin{align}
\label{main-eq}
\pa_t u +\Div \mff_{\mx}(u)&=\Div(\Div ( A_{\mx}(u) ) ), \ \ \mx \in M,\\
\label{ic}
u|_{t=0} &=u_0(\mx) \in L^2(M), 
\end{align} on a smooth (Hausdorff), orientable, compact $d$-dimensional Riemannian manifold $(M,g)$.  Here, for a fixed $\lambda$, $\mx \mapsto \mff_\mx(\lambda) \in \mathfrak{X}(M)$ is a vector field on $M$ while $\mx\mapsto A_\mx(\lambda)\in \mathcal{T}^1_1(M)$ is a symmetric $(1,1)$ tensor field on $M$.

We suppose that the map $(\mx,\lambda)\mapsto \mff_{\mx}(\lambda) \equiv \mff(\mx,\lambda)$, 
$\mff: M\times \R \to \mathfrak{X}(M)$ is $C^1$ and that, for every $ \lambda\in \R$,
$\mx \mapsto \mff_{\mx}(\lambda)\in \mathfrak{X}(M)$.
Also, $(\mx,\lambda)\mapsto A_\mx(\lambda): M\times\R \to T^1_1M$ is supposed to be $C^2$ and to 
satisfy   $\mx\mapsto A_\mx(\lambda)\in \mathcal{T}^1_1(M)$ for each $\lambda\in \R$ 
and we assume that the  $\lambda$-derivative $\pa_\lambda A_\mx$, which we denote in the sequel by $A'_\mx$ is symmetric positive definite, i.e.\ that the equation \eqref{main-eq} is strictly parabolic. In particular, this also implies that $\lambda \mapsto \langle A_{\mx}(\lambda) \mxi,\mxi \rangle$ 
is strictly increasing for any $\mxi\in T_\mx M$. In other words, we suppose that for a constant {$c>0$} independent of $\lambda$:
\begin{equation}
\label{strict}
\langle  A'_\mx(\lambda) \mxi, \mxi \rangle \geq c \|\mxi\|_g^2, \ \ \mxi \in T_\mx(M). %\ \ \text{i.e. locally} \ \ \langle g^{-1}A'(\mx,\lambda) \mxi,\mxi \rangle \geq c_1 |\mxi|^2, \ \ \mxi \in \R^d,
\end{equation} 

Before we proceed, we note that the standard form of the diffusion operator in Euclidean space is
\begin{equation}
\label{stand}
\Div a(\mx,u) \nabla u
\end{equation} with a positive definite matrix-valued function $a$ (or very often only non--negative definite; see e.g. \cite{CK} and references therein). 
For $u=u(t,\mx)$ we can rewrite the latter expression as $\Div \Div \big(A(\mx,u(t,\mx))\big)-\Div \big(\Div A(\mx,\lambda)\big|_{\lambda=u(t,\mx)}\big)$ where $A(\mx,\lambda)=\int_0^\lambda a(\mx,z)dz$, which reduces the standard form of the diffusion operator \eqref{stand} to the form from \eqref{main-eq}. We also note that that \eqref{main-eq} locally reduces to a standard parabolic equation (see the proof of Theorem \ref{th:increased_regularity} below) and that the form given in \eqref{main-eq} is chart independent. For further discussions see Remark \ref{rem:Laplace} below.

In the next section, we shall precisely define the notions and notations that we have been using above.

%Together with hyperbolic and elliptic equations, parabolic equations essentially form the field of partial differential equations. 
The literature on parabolic equations is vast so we will just mention the classical books \cite{LSU, tay}. They model transport processes governed by convection/advection (first order term) and diffusion (second order term) and therefore are of wide interest in the field of applied mathematics (see e.g. \cite{EK, med, JMN, bio} and references therein). Moreover, they play an important role in geometry since the Ricci flow (via the so called DeTurck's trick) is also an example of a parabolic equation \cite{benett}. 

Interestingly, investigations of general quasilinear parabolic equations on manifolds are fairly rare and recent. In \cite{HP, MM, polden}, one can find results concerning local existence of smooth solutions to parabolic equations on manifolds. As for weak solutions, a two-dimensional situation was considered in \cite{S}, and in a special situation (when the manifold is a torus), one can find an approach similar to ours in \cite[p.\ 319]{tay}).

In the current contribution, we introduce a Galerkin scheme for parabolic PDEs, i.e.\ we consider a finite dimensional approximation to the corresponding Cauchy problem and prove convergence of the resulting sequence of approximate solutions. Once we have developed the method for equation \eqref{main-eq}, we can easily extend it to numerous similar equations. For instance, we can add the Laplace-Beltrami operator on the right-hand side of \eqref{main-eq} and obtain existence of solution for such an equation (see Remark \ref{LB}), and we can extend the result from \cite{S} to manifolds of arbitrary dimension. Finally, we are able to apply our method to parabolic PDEs with stochastic forcing with a linear diffusion term. Parabolic equations (even degenerate parabolic equations) have already been investigated in the stochastic setting in e.g. \cite{Gess} (see also references therein) in the Euclidean space and in spacial situations in \cite{GK} on manifolds (in the form of the vanishing viscosity approximation for stochastic scalar conservation laws). This is the first such result for parabolic equations on manifolds. 
To be more precise, we shall investigate convergence of the Galerkin method for stochastic PDEs of the form
\begin{align}
\label{main-eq-stoch}
\pa_t u +\Div \mff_{\mx}(u)&=\Div(\Div ( A_{\mx}(u) ) )+ \Phi(\mx,u) dW_t, \ \ \mx \in M,
\end{align} on a smooth, compact, orientable, $d$-dimensional (Hausdorff) Riemannian manifold $(M,g)$ where for almost every $\mx \in M$ the matrix valued mapping $\lambda \mapsto A_{\mx}(\lambda)$ is linear. The object $W_t$ is the one-dimensional Wiener process defined on the stochastic basis $(\Omega, {\cal F}, ({\cal F}_t), {\bf P})$ with the sample space $\Omega$, the $\sigma$-algebra ${\cal F}$, the natural filtration ${\cal F}_t$ generated by the Wiener process $W_t$, and the probability measure ${\bf P}$. This means that the unknown function $u$ depends on $(t,\mx,\omega)\in \R^+\times M\times \Omega$.

The paper is organized as follows. In Section \ref{diffprelsec} we introduce basic notions from Riemannian geometry and Sobolev spaces on manifolds. We note that although the theory of Sobolev spaces on manifolds is well investigated, we find that this part of the paper represents a very useful collection of (not easy to find) information, especially from the viewpoint of the rapidly developing field of PDEs on manifolds.
In Section \ref{sec:galerkin}, we introduce the Galerkin scheme that we are going to use and prove the convergence of this 
scheme for bounded flux $\mff_\mx$ and diffusion $A_\mx$ under appropriate assumptions on these bounds (see \eqref{bnd-1}). 
Section \ref{sec:unbounded_flux} is devoted to proving existence of solutions under so called geometry-compatibility conditions \eqref{geomcomp} (which are actually incompressibility conditions when it comes to fluid dynamics as explained in \cite[Introduction]{GKM}), but without assuming \eqref{bnd-1}. In the next two remarks, we explain how to deal with the non-strictly parabolic equations on manifolds regularized with the Laplace-Beltrami operator \cite[Section 4]{GKM}, and how to obtain solutions and their regularity properties for parabolic equations of higher order (see also \cite{S}). In the final Section \ref{sec:stochastic}, we prove existence of solutions to \eqref{main-eq-stoch} with initial data \eqref{ic}. In the process, we recall some basic notions from stochastic calculus.
\section{Preliminaries from Riemannian geometry and functional analysis}
\label{diffprelsec}

We denote by $\mathcal{T}^1_0(M)$ the set of vector fields over $M$, by $\Omega^1(M)\equiv\mathcal{T}_1^0(M)$ the set of one forms 
over $M$, and by $\mathcal{T}^1_1(M)$ the set of $(1,1)$-tensors over $M$.

If $X\in \mathcal{T}^1_0=\mathfrak{X}(M)$ is a $C^1$ vector field on
$M$ with local representation $X=X^i\frac{\pa}{\pa x^i}$, then its divergence $\Div X\in C(M)$
is locally given by
\begin{equation}\label{divx}
\Div X = \frac{\pa X^k}{\pa x^k} + \Gamma^j_{kj}X^k.
\end{equation}
In this expression, the $\Gamma^i_{kl}$ denote the Christoffel symbols corresponding to $g$,
\[
\Gamma^i_{kl} = \frac{1}{2} g^{im}\Big(\frac{\partial g_{mk}}{\partial x^l} + \frac{\partial g_{ml}}{\partial x^k} - \frac{\partial g_{kl}}{\partial x^m} \Big),
\]
$g^{im}$ are the coefficients of the inverse metric, and throughout this paper
we always adhere to the Einstein summation convention. 
The same expression holds for $X$ a distributional vector field, and similarly for the formulae given below, which  
we formulate in the smooth case with the understanding that they carry over by continuous extension also to 
the distributional setting (cf.\ \cite{Mar,GKOS}). 

Using abstract index notation (cf., e.g., \cite[Ch.\ 2.4]{wald}), for a vector field $X^a$ we have
\[
\Div X = \nabla_a X^a.
\]
If a $C^1$ one-form $\omega\in \mathcal{T}^0_1(M)=\Omega^1(M)$ is locally given by $\omega=\omega_idx^i$, then
its divergence is defined as the {\em metric} contraction of its covariant differential $\nabla \omega\in \mathcal{T}^0_2(M)$, so
$\Div \omega = \nabla^b\omega_b$ in abstract index notation, or
\begin{equation} \label{divom}
\Div \omega = g^{ij} \pa_i\omega_j - \Gamma^k_{il} g^{il}\omega_k.
\end{equation} 
in local coordinates.
If $T = {T^a}_b\in \tensoroo(M)$, then $\Div T = \nabla_a  {T^a}_b\in \Omega^1(M)$. Locally, 
$T=T^k_i \frac{\pa}{\pa x^k}\otimes dx^i$. Then 
$\Div T = (\Div T)_i dx^i$, where
\begin{equation}\label{div11}
(\Div T)_i = \pa_j T^j_i + \Gamma^j_{jl} T^l_i - \Gamma^l_{ji} T^j_l.
\end{equation}
For $T\in \tensoroo(M)$ in $C^2$, $\Div(\Div(T))\in C(M)$. The explicit form of $\Div(\Div(T))$ in terms of local coordinates 
can be found in \cite{GKM} (it will not be needed here).
\begin{remark}\label{rem:Laplace}
Using the notations introduced above, we can add to the observations concerning the relationship between \eqref{main-eq}
and more standard forms of parabolic operators initiated after \eqref{stand}, now in the full setting of Riemannian manifolds. 
First, denote by $\delta\in \mathcal{T}^1_1(M)$ the Kronecker-delta tensor,
$\delta(\omega,X) = \omega(X)$ for $\omega\in \Omega^1(M)$ and $X\in \mathfrak{X}(M)$ (cf.\ \cite{AM}), and let $A_\mx(\lambda):= \lambda \cdot \delta(\mx)$. In any local chart, $\delta = \pa_i\otimes dx^i$, and using
\eqref{div11} we see that $\Div(A_\mx(u)) = \omega_i dx^i$, where 
\begin{align*}
\omega_i = \delta^j_i\partial_j u + \big(\Gamma^j_{jl}\delta^l_i - \Gamma^l_{ji} \delta^j_l \big) u = \delta^j_i \partial_j u = \partial_i u
\end{align*} 
Since $(\nabla u)^i = g^{ij}\partial_j u$, we get $\omega_i = g_{ik}(\nabla u)^k$. Hence
in this special case we have $\Div(A_\mx(u)) = g \nabla u = du$, and thereby
\begin{equation}\label{eq:Laplace}
\Div(\Div(A_\mx(u))) = \Div(du) = \nabla^a \nabla_a u = \Delta u,
\end{equation}
with $\Delta$ the Laplace-Beltrami operator on $M$. 
Clearly, $\Div(A_\mx(u))$ in general will not necessarily be of such a simple form. However, our analysis of
\eqref{main-eq} readily carries over to right hand sides of the form $\Div(C(x,u) \nabla u)$ ($C(.,\lambda)\in \mathcal{T}^1_1(M)$) as well,
cf.\ also Remark \ref{rem:general_Q} below.
\end{remark}
The metric $g$ induces scalar products on any tensor product $(T_{\mx}M)^r_s$, given in abstract index notation by
\[
\langle S,T\rangle = S^{a_1\dots a_r}{}_{b_1\dots b_s} T_{a_1\dots a_r}{}^{b_1\dots b_s}.
\] 
We will denote the corresponding norms by $\|\,\|_g$ irrespective of $r$ and $s$. Also, for any tensor field
 $T\in \mathcal{T}^r_s(M)$ we set
\[
\|T\|_{L^\infty(M)} := \sup_{\mx \in M} \|T(\mx)\|_g.
\]
and
\[
\|T\|_{L^2(M)} := \Big(\int_M \|T(\mx)\|^2_g\, dV(\mx) \Big)^{\frac{1}{2}}.
\]
Here, $dV$ is the oriented Riemannian volume measure (given, in any chart of the oriented atlas, by
$dV = \sqrt{|\det(g_{ij})|}\,dx^1\wedge \dots \wedge dx^n$).

In the Cauchy problem \eqref{main-eq}, \eqref{ic}, $(\mx,\lambda)\mapsto A_\mx(\lambda): M\times\R \to T^1_1M$ is $C^2$ and 
for each $\lambda\in \R$, $\mx\mapsto A_\mx(\lambda)\in \mathcal{T}^1_1(M)$. 
We shall need the following lemma.
\begin{lemma}
\label{divdiv-IbP}
Let $A\in {\cal T}^1_1(M)$ be $C^2$, $f\in C^2(M)$, $M$ compact and oriented without boundary. Then
\begin{equation}
\label{div-div-rule}
\int_M f \Div(\Div A) dV %=\int_M tr(A\circ \tilde{H}^f)dV
=-\int_M (\Div A)(\nabla f) dV.
\end{equation} 
\end{lemma}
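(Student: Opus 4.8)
The plan is to prove the integration-by-parts identity \eqref{div-div-rule} by applying the divergence theorem for vector fields twice. The fundamental tool is the fact that on a compact oriented manifold without boundary, $\int_M \Div X\, dV = 0$ for any $C^1$ vector field $X$ (this is just Stokes' theorem applied to the contraction of the volume form with $X$, or equivalently $\int_M \nabla_a X^a\, dV = 0$). Equivalently, for a scalar $h$ and vector field $X$ we have the product rule $\Div(hX) = h\,\Div X + X(h) = h\,\Div X + \langle \nabla h, X\rangle$, so integrating and using $\int_M \Div(hX)\,dV = 0$ gives the basic per-level integration by parts $\int_M h\,\Div X\,dV = -\int_M \langle \nabla h, X\rangle\,dV$.

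First I would rewrite the left-hand side. Set $\omega := \Div A \in \Omega^1(M)$, which is $C^1$ since $A$ is $C^2$. Then $\Div(\Div A) = \Div \omega = \nabla^b \omega_b$ in abstract index notation. The quantity $f\,\Div(\Div A) = f\,\nabla^b\omega_b$ should be recognized as almost a total divergence: by the Leibniz rule for the covariant derivative, $\nabla^b(f\,\omega_b) = f\,\nabla^b\omega_b + (\nabla^b f)\,\omega_b$. The left-hand term $\nabla^b(f\,\omega_b)$ is precisely $\Div(f\,\omega^\sharp)$, the divergence of the vector field obtained by raising the index of the one-form $f\omega$ with the metric, and hence integrates to zero over $M$. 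This yields
\begin{equation*}
\int_M f\,\Div(\Div A)\, dV = -\int_M (\nabla^b f)\,\omega_b\, dV = -\int_M \langle \nabla f, \Div A\rangle\, dV,
\end{equation*}
where I have used that the metric contraction $(\nabla^b f)\,\omega_b$ equals the evaluation $\omega(\nabla f) = (\Div A)(\nabla f)$. This is exactly the claimed identity \eqref{div-div-rule}.

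For rigor I would phrase the zero-integral step carefully: rather than invoking covariant Leibniz rules abstractly, I can work directly with the coordinate formulas \eqref{divx}, \eqref{divom} and \eqref{div11} from the preliminaries. Concretely, one verifies that for a $C^1$ one-form $\eta = \eta_i\,dx^i$ the scalar $\Div \eta$ agrees with $\Div$ of the vector field $\eta^\sharp$ with components $g^{ij}\eta_j$, so that $\int_M \Div\eta\, dV = 0$ on a closed manifold. Applying this with $\eta = f\,\omega = f\,(\Div A)$ and expanding $\Div(f\omega)$ by the product rule splits the integrand into $f\,\Div\omega$ plus a contraction of $df$ against $\omega$, giving the result upon integration.

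The main obstacle is a bookkeeping one rather than a conceptual one: I must be consistent about index placement and metric raising so that the pairing $(\Div A)(\nabla f)$ in the statement, which is the natural one-form--vector contraction, matches the metric-contracted expression $g^{ij}(\pa_i f)\,\omega_j$ produced by the integration by parts. Since $\nabla f$ is the metric gradient $(\nabla f)^i = g^{ij}\pa_j f$ and $\Div A$ is a one-form, the pairing $(\Div A)(\nabla f) = \omega_i\,(\nabla f)^i = g^{ij}\omega_i\,\pa_j f$ is manifestly the correct contraction, so no spurious factors appear. A minor additional point worth noting is the regularity: one differentiation lands $\Div A \in C^1$ and a second gives $\Div(\Div A) \in C^0$, matching the hypothesis that $A$ is $C^2$ and $f \in C^2$, so every integral in sight is of a continuous integrand over a compact manifold and hence finite.
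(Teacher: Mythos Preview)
Your proof is correct and follows essentially the same approach as the paper: both arguments set $\omega=\Div A$, use the Leibniz rule to write $\Div(f\omega)=f\,\Div\omega+\omega(\nabla f)$, and then invoke $\int_M \Div(f\omega)\,dV=0$ on the closed manifold (via the identification of $\Div$ on one-forms with $\Div$ on the metrically equivalent vector field and Stokes' theorem). The only difference is cosmetic---you phrase the product rule in abstract index notation $\nabla^b(f\omega_b)$ while the paper writes $\Div(f\Div A)$ directly.
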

\begin{proof}
By Stokes' theorem, $\int_M \Div X \,dV = 0$ for any $C^1$ vector field $X$. 
If $\omega=X_a$ is the one-form metrically equivalent to $X$, $\omega=X^\flat$, then
$\Div \omega = \nabla^a X_a = \nabla_a X^a$, so we also have  $\int_M \Div \omega \,dV = 0$
for any $C^1$ one-form $\omega$. Now
\[
\Div(f \Div A) = \nabla^b(f\nabla_a {A^a}_b) = \nabla^b f \nabla_a {A^a}_b + f \nabla^b\nabla_a {A^a}_b
= (\Div A)(\nabla f) + f\Div(\Div A),
\]
so
\[
0 = \int_M \Div(f\Div A)\,dV = \int_M (\Div A)(\nabla f)\,dV + \int_M f \Div(\Div A)\,dV.
\]
\end{proof}
Furthermore, under the same assumptions as in Lemma \ref{divdiv-IbP} we have:
\begin{lemma}\label{lem:trace_identity}
\begin{equation}\label{eq:divdiv-trace}
\int_M f \Div(\Div A)) dV = \int_M \mathrm{tr}(\tilde H^f\circ A^\top)\,dV = \int_M \mathrm{tr}(A\circ \tilde H^f)\,dV.
\end{equation}
Here, $\mathrm{tr}$ denotes the trace, ${(A^\top)^a}_b = {A_b}^a$ is the transpose of $A$,
and ${(\tilde{H}^f)^a}_b=\nabla^a \nabla_b f$ is the $(1,1)$-tensor field metrically equivalent
to the Hessian $\nabla_a\nabla_b f$ of $f$.
\end{lemma}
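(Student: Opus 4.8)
The plan is to start from the integration-by-parts identity already established in Lemma~\ref{divdiv-IbP} and integrate by parts a second time. Working throughout in abstract index notation, Lemma~\ref{divdiv-IbP} gives
\[
\int_M f \Div(\Div A)\,dV = -\int_M (\Div A)(\nabla f)\,dV = -\int_M (\nabla_a {A^a}_b)\,\nabla^b f\,dV,
\]
since $(\Div A)_b = \nabla_a {A^a}_b$ and $(\nabla f)^b = \nabla^b f$. I would then apply Stokes' theorem to the $C^1$ vector field $Y^a := {A^a}_b\,\nabla^b f$, whose divergence expands by the Leibniz rule as $\nabla_a Y^a = (\nabla_a {A^a}_b)\nabla^b f + {A^a}_b\,\nabla_a\nabla^b f$. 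Exactly as in the proof of Lemma~\ref{divdiv-IbP} we have $\int_M \nabla_a Y^a\,dV = 0$, so that
\[
\int_M f \Div(\Div A)\,dV = \int_M {A^a}_b\,\nabla_a\nabla^b f\,dV.
\]

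Next I would identify this integrand with the claimed trace. Using metric compatibility $\nabla g = 0$ we get $\nabla_a\nabla^b f = g^{bc}\nabla_a\nabla_c f$, hence ${A^a}_b\,\nabla_a\nabla^b f = {A^a}_b\,g^{bc}\nabla_a\nabla_c f$. On the other hand, by definition ${(\tilde H^f)^a}_b = \nabla^a\nabla_b f = g^{ac}\nabla_c\nabla_b f$, so the trace of the composition of $(1,1)$-tensors (viewed as endomorphisms) is
\[
\mathrm{tr}(A\circ \tilde H^f) = {A^a}_c\,{(\tilde H^f)^c}_a = {A^a}_c\,g^{cd}\nabla_d\nabla_a f.
\]
Since the Hessian $\nabla_a\nabla_b f$ of a function is symmetric (the Levi-Civita connection is torsion-free), $\nabla_d\nabla_a f = \nabla_a\nabla_d f$, and relabelling the contracted indices shows this coincides pointwise with ${A^a}_b\,g^{bc}\nabla_a\nabla_c f$. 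This yields the first equality in \eqref{eq:divdiv-trace}.

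For the second equality it suffices to prove the pointwise linear-algebra identity $\mathrm{tr}(A\circ\tilde H^f) = \mathrm{tr}(\tilde H^f\circ A^\top)$. The key observation is that $\tilde H^f$ is self-transpose: a direct computation with the metric, again using symmetry of the Hessian, gives ${((\tilde H^f)^\top)^a}_b = g^{ac}g_{bd}\,{(\tilde H^f)^d}_c = g^{ac}\nabla_c\nabla_b f = {(\tilde H^f)^a}_b$. Combining this with the elementary trace rules $\mathrm{tr}(BC)=\mathrm{tr}(CB)$ and $\mathrm{tr}(C^\top)=\mathrm{tr}(C)$, together with $(A\circ\tilde H^f)^\top = (\tilde H^f)^\top\circ A^\top = \tilde H^f\circ A^\top$, gives $\mathrm{tr}(\tilde H^f\circ A^\top) = \mathrm{tr}\big((A\circ\tilde H^f)^\top\big) = \mathrm{tr}(A\circ\tilde H^f)$, as required.

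I do not expect a genuine obstacle here: the argument is two integrations by parts followed by bookkeeping of index raising/lowering and of the transpose. The only points requiring care are keeping precise track of where the metric is used to raise and lower indices, and invoking the symmetry of the Hessian at the two places where the order of covariant derivatives, respectively the transpose, is exchanged.
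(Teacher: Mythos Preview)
Your proof is correct and follows essentially the same approach as the paper: both apply Lemma~\ref{divdiv-IbP} and then integrate by parts once more via Stokes' theorem applied to the vector field $Y^a={A^a}_b\nabla^b f$ (the paper writes this step as the pointwise Leibniz identity $(\Div A)(\nabla f)=\Div(A(\nabla f))-{A^a}_b\nabla_a\nabla^b f$ before integrating), after which the remaining work is the same index bookkeeping using symmetry of the Hessian and the trace/transpose rules. The only cosmetic difference is that the paper first identifies the integrand with $\mathrm{tr}(\tilde H^f\circ A^\top)$ and then passes to $\mathrm{tr}(A\circ\tilde H^f)$, whereas you arrive at the latter first and deduce the former.
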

\begin{proof} We have
\begin{align*}
(\Div A)(\nabla f) &= \nabla_a {A^a}_b \nabla^b f = \nabla_a ({A^a}_b \nabla^b f) - {A^a}_b(\nabla_a \nabla^b f)\\
&= \Div(A(\nabla f)) - (A^\top)_b{ }^a(\nabla_a \nabla^b f)\\
&= \Div(A(\nabla f)) - (A^\top)^b{ }_a(\nabla^a \nabla_b f)\\
&= \Div(A(\nabla f)) - \mathrm{tr}(\tilde H^f\circ A^\top).
\end{align*}
Integrating this identity over $M$, and applying Stokes' theorem and Lemma \ref{divdiv-IbP} gives the first equality.
The second one follows since $\tilde H^f$ is symmetric and $\mathrm{tr}B = \mathrm{tr}B^\top$ for any $(1,1)$-tensor $B$.
\end{proof}
\begin{remark}\ 
\begin{itemize}
\item[(i)] Since $\Div \Div A = \Div \Div A^\top$ and $A^{\top\top} = A$, on the right hand side of \eqref{div-div-rule} we
may replace $A$ by $A^\top$, and conversely for \eqref{eq:divdiv-trace}.
\item[(ii)] By density, the conclusion of Lemma \ref{divdiv-IbP}  remains valid
for $A\in H^2(M)$ and $f\in H^1(M)$, and that of Lemma \ref{lem:trace_identity} for $A\in H^2(M)$ and $f\in H^2(M)$.
\end{itemize}
\end{remark}

We need the following consequence of the parabolicity of the operator $\Div\Div A_{\mx}(\cdot)$. However, we first need to assume that the vector field $\mff$ and the tensor $A_{\mx}$ are bounded in the sense that there exists $\bar{C}>0$ such that (recalling
that we set $A'_{\mx}(\lambda):=\pa_{\lambda} A_{\mx}(\lambda) $) for all $\lambda$:
\begin{equation}
\label{bnd-1}
\begin{split}
\|\mff_{\mx}(\lambda) \|_{L^\infty(M)} +\| A_{\mx}(\lambda) \|_{L^\infty(M)} &+ \|\lambda  A'_{\mx}(\lambda) \|_{L^\infty(M)}  \\
&  +  \| \Div A_{\mx}(\lambda) \|_{L^\infty(M)} \leq \bar{C} (1+|\lambda|) 
\end{split}
\end{equation}
%for the constant $C(\varphi)$ independent of $\lambda$. 
%Remark that from here it follows that $\| A'_{\mx}(\lambda) \|_{L^\infty(M)} \leq \bar{C}$. 
As we shall see in Section 4, the assumption \eqref{bnd-1} can be avoided in some important situations. 
\begin{lemma}
\label{Lpar}
(i) For any $u\in H^2(M)$, 
\begin{equation}
\begin{split}
\label{P1}
\int_M \Div & \Div A_{\mx}(u)\,  u(\mx) dV(\mx)\\ 
&\leq -c \int_M \|\nabla u(\mx)\|_g^2 \, dV(\mx) + C \max\{ 1, \int_M |u(\mx)|^2\, dV(\mx) \}.
\end{split}
\end{equation} 

(i') If $\lambda\mapsto A_\mx(\lambda)$ is linear then
\begin{equation}
\begin{split}
\label{P1-1}
\int_M \Div & \Div A_{\mx}(u)\,  u(\mx) dV(\mx)\\ 
&\leq -c \int_M \|\nabla u(\mx)\|_g^2 \, dV(\mx) + C \int_M |u(\mx)|^2\, dV(\mx).
\end{split}
\end{equation} 

(ii) If $\lambda\mapsto A_\mx(\lambda)$ is linear and $u\in H^3(M)$, then (with $\Delta u = \Div \grad u$)
\begin{equation}\label{P2-L}
\begin{split}
\int_M \Div \Div & A_{\mx}(u)\,  \Delta u(\mx) dV(\mx)
\geq k \int_M |\Delta u(\mx)|^2 \, dV(\mx)\\ 
&- K \max\{ 1, \int_M |u(\mx)|^2\, dV(\mx), \int_M |\nabla u(\mx)|^2\, dV(\mx) \}
\end{split}
\end{equation}
for some constants $k,K,c,C>0$ independent of $u$.
\end{lemma}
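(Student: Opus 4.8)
The common thread is integration by parts: Lemma~\ref{divdiv-IbP} trades the two divergences falling on $A_\mx(u)$ for derivatives of the test function, after which the coercive term is furnished by strict parabolicity \eqref{strict}. For (i) I would apply \eqref{div-div-rule} with $f=u$ to obtain
\[
\int_M u\,\Div\Div A_\mx(u)\,dV=-\int_M (\Div A_\mx(u))(\nabla u)\,dV .
\]
The point is to differentiate the composition $A_\mx(u)=A(\mx,u(\mx))$ correctly: since the Christoffel symbols in \eqref{div11} are independent of $\lambda$, the chain rule yields $(\Div A_\mx(u))_i=(\Div A)_i(\mx,u)+(A'_\mx(u))^j{}_i\,\pa_j u$, where $(\Div A)(\mx,\lambda)$ denotes the divergence of $\mx\mapsto A_\mx(\lambda)$ at frozen $\lambda$. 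Contracting the second summand with $\nabla u$ gives $\langle A'_\mx(u)\nabla u,\nabla u\rangle\ge c\|\nabla u\|_g^2$ by \eqref{strict}: this is the coercive term. The first summand is estimated pointwise by $|(\Div A)(\mx,u)(\nabla u)|\le\bar C(1+|u|)\|\nabla u\|_g$ using the bound on $\|\Div A_\mx(\lambda)\|_{L^\infty}$ in \eqref{bnd-1}; a Young inequality absorbs half of $\int_M\|\nabla u\|_g^2$ and leaves a remainder controlled by $\int_M(1+|u|^2)\,dV$, which produces the $\max\{1,\int_M|u|^2\}$ on the right. Part (i') is the same computation with $A_\mx(\lambda)=\lambda B_\mx$, $B_\mx:=A'_\mx$ independent of $\lambda$: then $(\Div A)(\mx,u)=u\,\Div B_\mx$, so the non-coercive term is $\int_M u\,(\Div B_\mx)(\nabla u)\,dV$, bounded by $\|\Div B_\mx\|_{L^\infty}\int_M|u|\,\|\nabla u\|_g\,dV$, and Young leaves only $C\int_M|u|^2$ with no additive constant.

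Part (ii) is the substantive one. Writing $A_\mx(u)=uB_\mx$ and expanding $\Div\Div A_\mx(u)$ via \eqref{divom}, \eqref{div11}, the second-order part is $\langle B,\nabla^2 u\rangle:=B^{ab}\nabla_a\nabla_b u$, while all remaining terms are at most first order in $u$ with coefficients built from $B,\Div B,\Div\Div B$, hence bounded on the compact $M$. Testing against $\Delta u$ and using Young's inequality, those remaining terms are controlled by $\epsilon\int_M|\Delta u|^2+C_\epsilon\int_M(|\nabla u|^2+|u|^2)$, so everything reduces to the leading term $J:=\int_M\langle B,\nabla^2u\rangle\,\Delta u\,dV$, where $\Delta u=\mathrm{tr}_g\nabla^2u$. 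The obstacle is that $J$ cannot be treated pointwise: $\langle B,\nabla^2u\rangle\,\mathrm{tr}_g\nabla^2u$ is neither sign-definite nor bounded below by $|\Delta u|^2$, since for a general symmetric Hessian $\langle B,\nabla^2u\rangle$ need not be comparable to $\mathrm{tr}_g\nabla^2u$.

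The remedy is to integrate by parts twice in $J$, redistributing one derivative from each Hessian factor. Concretely, one integration by parts moves a derivative off the first factor; one then commutes $\nabla_a\Delta u=\nabla^c\nabla_a\nabla_c u+(\mathrm{Ric}\cdot\nabla u)_a$ by the Bochner identity, and a second integration by parts moves the remaining $\nabla^c$; the upshot is
\[
J=\int_M B^{ab}\,(\nabla_a\nabla_c u)(\nabla_b\nabla^c u)\,dV+(\text{lower order}).
\]
Because $B^{ab}\ge c\,g^{ab}$ by \eqref{strict}, the principal integral is bounded below by $c\int_M\|\nabla^2u\|_g^2\,dV$, and since $\Delta u=\mathrm{tr}_g\nabla^2u$, Cauchy--Schwarz gives $\|\nabla^2u\|_g^2\ge\frac1d|\Delta u|^2$; this is the coercive contribution, with $k=c/d$.

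I expect the real work to be exactly the bookkeeping of the terms generated by these manipulations, and this is the main obstacle. On a manifold the covariant derivatives do not commute, so the two integrations by parts and the Bochner commutation produce (a) derivatives landing on $B$, i.e.\ bounded coefficients times $\nabla u$ or $\nabla^2u$, and (b) curvature terms of the form $\mathrm{Ric}\cdot\nabla u$. Each such term is at most quadratic in $(\nabla u,\nabla^2u)$ with coefficients bounded in terms of $B$, its first two derivatives, and the curvature of $(M,g)$; a Young inequality then absorbs the $\epsilon\|\nabla^2u\|_g^2$ pieces into the coercive term and relegates the rest to $\int_M(|\nabla u|^2+|u|^2)$. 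Collecting the estimates and renaming constants yields (ii) with suitable $k,K>0$; the additive $1$ in the maximum is not actually needed in the linear case but is harmless.
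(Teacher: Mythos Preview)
Your proposal is correct and follows essentially the same route as the paper: parts (i) and (i') are argued identically, and for (ii) both arguments reach the coercive term $\int_M B^{ab}(\nabla_a\nabla_c u)(\nabla_b\nabla^c u)\,dV=\int_M\mathrm{tr}((\tilde H^u)^2 A)\,dV$ via two integrations by parts together with the curvature commutation of $\nabla_a\Delta u$, and then use $B\ge c\,g$ and $\|\tilde H^u\|_g^2\ge \tfrac1d|\Delta u|^2$. The only organizational difference is the order of operations in (ii): the paper integrates by parts once and then expands $\Div(A_\mx u)(\nabla\Delta u)$, which leaves a third-order remainder $\int_M u\,(\Div B)(\nabla\Delta u)\,dV$ that it handles by an $H^1$--$H^{-1}$ duality estimate, whereas you expand $\Div\Div(uB)$ completely before integrating by parts, so the corresponding remainder appears as the innocuous term $\int_M u\,(\Div\Div B)\,\Delta u\,dV$ and is dealt with by Young's inequality directly.
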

\begin{proof} {By density, it suffices to assume $u$ to be smooth.}

(i) By \eqref{div-div-rule}, we have
\begin{equation}
\label{P3}
\int_M \Div \Div A_{\mx}(u)\, u(\mx) dV(\mx) =-\int_M  \Div A_{\mx}(u)\,  \nabla u(\mx) dV(\mx).
\end{equation}
Here, 
\begin{align*}
\Div (A_{\mx}(u))\nabla u &= \nabla_b (A_{\mx}(u))^b{}_a \nabla^a u =  (\nabla_b A^b{}_a)(\mx,u(\mx))\nabla^a u 
+ (A')^b{}_a \nabla_b u \nabla^a u\\
&= \big(\Div A_{\mx}(\lambda)\big|_{\lambda=u}\big)(\nabla u) + \lara{A'_{\mx}(u)\nabla u }{\nabla u}.
\end{align*}
By \eqref{bnd-1} we have $|\big(\Div A_{\mx}(\lambda)\big|_{\lambda=u}\big)(\nabla u)| \le \bar C(1+|u|)\|\nabla u\|_g$.
Combining this with \eqref{strict}, we obtain
\begin{align}
\label{new-11}
-\Div (A_{\mx}(u))\nabla u &\le - c \|\nabla u\|_g^2 + \bar C (1+|u|)\|\nabla u\|_g \\
& \le - c \|\nabla u\|_g^2 + \bar C^2 N + \frac{1}{N} \|\nabla u\|_g^2 + \bar CN|u|^2 + \frac{\bar C}{N} \|\nabla u\|_g^2,
\nonumber
\end{align}
where we applied the Peter-Paul inequality twice in the last step. By choosing  $N>0$ large enough, we conclude the proof. 

(i') We get the conclusion by noticing that linearity of $A_\mx$ implies {due to} \eqref{bnd-1}:
$$
\| \Div (A_{\mx}(\lambda)) \|_{L^\infty(M)} \leq C \lambda
$$% \melanietodo{should be $ \| \Div (A_{\mx}(\lambda)) \|_{L^\infty(M)} \leq C \lambda$ ?} 
 with which we omit the term $\bar C^2 N$ in \eqref{new-11}.

(ii) In this case we have $A_\mx(u) = A_\mx\cdot u$, and by Stokes' theorem
\[
\int_M \Div \Div ( A_{\mx}u)\,  \Delta u(\mx)\, dV(\mx) = - \int_M  \Div ( A_{\mx}u)( \nabla \Delta u)\, dV(\mx). 
\]
We have
\begin{align*}
\Div ( A_{\mx}u)( \nabla \Delta u) = u \nabla_a A^a{}_b \nabla^b\nabla^c\nabla_c u + A^a{}_b\nabla_a u \nabla^b\nabla^c\nabla_c u,
\end{align*}
where (with $R$ the Riemann tensor, cf., e.g., \cite[(3.2.3)]{wald})
\begin{align*}
A^a{}_b\nabla_a u \nabla^b\nabla^c\nabla_c u = A^a{}_b\nabla_a u \nabla^c\nabla^b\nabla_c u
+ A^a{}_b\nabla_a u R^{cb}{}_c{}^d \nabla_d u. 
\end{align*}
Again by Stokes' theorem,
\begin{align*}
\int A^a{}_b\nabla_a & u \nabla^c\nabla^b\nabla_c u\, dV = - \int \nabla^c( A^a{}_b \nabla_a u) \nabla^b\nabla_c u\,dV \\
&= - \int (\nabla^c  A^a{}_b)\nabla_a u \nabla^b\nabla_c u \, dV - \int  A^a{}_b\nabla^c\nabla_a u \nabla^b\nabla_c u \,dV.
\end{align*}
The integrand in the last term can be rewritten as
\[
\nabla^c\nabla_a u A^a{}_b \nabla^b\nabla_c u = \mathrm{tr}(\tilde H^u\cdot A \cdot \tilde H^u) = \mathrm{tr}((\tilde H^u)^2\cdot A)
\]
Collecting terms, we arrive at 
\begin{align} 
\label{nice}
\int_M \Div  \Div ( A_{\mx}u)\, & \Delta u(\mx)\, dV(\mx) = \int_M \mathrm{tr}((\tilde H^u)^2\cdot A) \,dV - \int_M u \Div(A)(\nabla \Delta u)\,dV\\
& +\int_M (\tilde H^u)^b{}_c \nabla^c A^a{}_b \nabla_a u\, dV - \int_M A^a{}_b\nabla_a u R^{cb}{}_c{}^d \nabla_d u\, dV.
\nonumber
\end{align} 
Let us consider each term in the previous expression individually. In general, if $R$ is a symmetric 
matrix and $S$ is symmetric and positive definite, then (with $\|\,.\,\|_F$ the Frobenius norm) we have 
\[
\mathrm{tr}(R^2\cdot S) \ge \lambda  \mathrm{tr}(R^2)=\lambda \|R\|^2_F, %\ge c\lambda \mathrm{tr}(R)^2, 
\]
where $\lambda$ is the smallest eigenvalue of $S$. % and $c$ only depends on the dimension.
 Thus there exists a constant $k>0$ such that
\begin{equation}
\label{nice1}
\int \mathrm{tr}((\tilde H^u)^2\cdot A) \,dV \geq k \int_M  \|\tilde H^u\|^2_F \, dV.
\end{equation} 
For the second term, we have (with $\dim(M)=d$)
\begin{equation}
\label{nice2}
\begin{split}
\big|\int_M u &\Div(A)(\nabla \Delta u)\,dV \big| \leq \|u \Div(A)\|_{H^1} \|\nabla \Delta u \|_{H^{-1}(M)}\\
&\leq {K_0} \|u \Div(A)\|_{H^1} \| \Delta u \|_{L^2(M)}   
\\&\leq \tilde{K}_1 \|u \Div(A)\|^2_{H^1} +\frac{k}{4d^2} \int_M |\Delta u|^2 dV \leq K_1 \|u \|^2_{H^1} +\frac{k}{4d^2} \int_M |\Delta u|^2 dV, 
\end{split}
\end{equation}
for an appropriate constant $K_1$ (and $k$ from \eqref{nice1}). We estimate the third term in a similar manner:
\begin{equation}
\label{nice3}
\begin{split}
\big|\int_M (\tilde H^u)^b{}_c & \nabla^c A^a{}_b \nabla_a u\, dV \big|
\\&
{
\leq \frac{k}{4a} \int_M \|\tilde H^u\|_g^2 dV+ K_2 \|\nabla A\|^2_{\infty} \int_M  \|\nabla u\|_g^2 dV}\\
&
{
\leq \frac{k}{4} \int_M  \|\tilde H^u\|^2_F  \, dV+ K_2 \|\nabla A\|^2_{\infty} \int_M  \|\nabla u\|_g^2\, dV.}
\end{split} 
\end{equation}
{Here %we used the fact that for the Frobenius norm of a symmetric matrix $R$ one has $\|R\|_F^2 =
%\mathrm{tr}(R^2)$,%\le \mathrm{tr}(R)^2$,  and 
$a$ is an appropriate constant to estimate $\|\,.\|_g$ by $\|\,.\,\|_F$ on $M$.}
The last term is estimated directly:
\begin{equation}
\label{nice4}
\big| \int_M A^a{}_b\nabla_a u R^{cb}{}_c{}^d \nabla_d u\, dV \big| \leq 
{K_3} \int_M \|\nabla u\|_g^2 dV. 
\end{equation} 
Combining \eqref{nice}--\eqref{nice4}  and using $\frac{3k}{4} \int_M   \|\tilde H^u\|^2_F  \, dV\geq \frac{3k}{4d^2} \int_M  |\Delta u|^2 dV $,  we obtain \eqref{P2-L}.
% conclude
% \begin{equation}
% \label{nice5}
% \int_M \Div  \Div ( A_{\mx}u)\,  \Delta u(\mx)\, dV(\mx) \leq -\frac{k}{2} \int_M  \|\tilde H^u\|_g^2 dV 
% %&+\max\{1, K_1 \|\Div(A)\|^2_{\infty}, K_2 \|\nabla^c A^a{}_b\|^2_{\infty},\| A^a{}_b  R^{cb}{}_c{}^d\|_{\infty} \} 
% +C\|u\|^2_{H^1(M)}, 
% \end{equation} 
% as claimed.
\end{proof}

Let us now recall some basic properties of Sobolev spaces on manifolds that we shall need in the sequel,
referring to \cite{[CP],[H]} for basic definitions.
As before, let $(M,g)$ be a compact orientable Riemann manifold, and let, for $u\in {\cal D}'(M)$
$$
\Delta u:= \Div \grad(u),
$$ 
so $\Delta$ denotes the Laplace-Beltrami operator.
Denoting by $\delta$ the codifferential ($\delta \alpha := (-1)^{nk+n+1}*\!d\!*\!\alpha$ for $\alpha\in \Omega^k(M)$, $*$ the
Hodge-star operator), this corresponds to $-\Delta=(d+\delta)^2=d \delta+ \delta d$ on forms ($-\Delta =\delta d$ on functions). 
Then $-\Delta$ is a positive, essentially self-adjoint operator on $L^2(M)$, cf.\ \cite{[G]}. 
Moreover, it is elliptic and formally self-adjoint. The same is therefore true of $I-\Delta$. Let $\bar{\Delta}$ be the closure of $\Delta$, 
then $-\bar{\Delta}$ has all the above properties and is in fact self-adjoint. We remark that all of the above is true if $M$ is merely complete. 
By \cite{[S]}, cf. \cite[p.298.]{[T]}, the operator $(I-\bar{\Delta})^{s/2}$ is a pseudo-differential operator of order $s$ for any $s\in \R$. Moreover, it is itself elliptic, self adjoint and positive with inverse $(I-\bar{\Delta})^{-s/2}$. Often (see e.g. \cite{[T]}), no distinction is made between $\Delta$ and $\bar{\Delta}$.

Set $\Lambda^s:=(I-\bar{\Delta})^{s/2}$, then $(\Lambda^s)^{-1}=\Lambda^{-s}$ and by \cite[Th.\ 8.5]{[CP]}, for every $r\in \R$:
$$
\Lambda^s: H^r(M)\to H^{r-s}(M)
$$ is a linear isomorphism with inverse $\Lambda^{-s}$.

Now, $\Lambda^{-1}: L^2(M)\to H^1(M)\hookrightarrow L^2(M)$ is a compact operator (by Rellich's theorem), so by the spectral theorem for compact operators there exists a countable orthonormal basis $(e_n)$ 
of $L^2(M)$ consisting of eigenfunctions of $\Lambda^{-1}$, and we denote the 
eigenvalue of $e_n$ by $\lambda_n^{-1}$. This implies $\Lambda^1 e_n=\lambda_n e_n$ and, since $\Lambda^1$ is elliptic, we have $e_n \in C^\infty(M)$. {Also, $\lambda_n \nearrow +\infty$ as
$n\to\infty$.}

Since $\Lambda^{-s}:L^2(M)\to H^s(M)$ is an isomorphism, we may introduce on $H^s(M)$ an equivalent scalar product by 
\begin{equation}
\label{SP}
\langle u, v \rangle_s:=\langle \Lambda^s u, \Lambda^s v \rangle_{L^2}.
\end{equation}

With respect to $\langle\cdot,\cdot \rangle_s$, $(e_m)_{m\in \N}$ is an orthogonal system:
\begin{align}\label{eq:e_j_sobolev_products}
\langle e_m,e_n \rangle_s= \langle \Lambda^s e_m,\Lambda^s e_n \rangle_{L^2}=\langle \Lambda^{2s} e_m, e_n  \rangle_{L^2}
=\langle \lambda_m^{2s} e_m,e_n\rangle_{L^2}=\lambda_m^{2s} \delta_{mn},
\end{align} 
and so the corresponding orthonormal system is
\begin{equation}\label{eq:Hs_basis}
(e^{(s)}_m)_{m\in \N} := (\lambda_m^{-s}e_m)_{m\in\N}.
\end{equation}
It is complete in $H^s(M)$ since 
$$
\langle u, e_m \rangle_s=0 \ \ \forall m\in \N \, \implies \, \lambda^{-2s}_m \langle 
\Lambda^{2s}u,e_m \rangle_{L^2}=0 \ \ \forall m \; \implies \Lambda^{2s}u=0 \, \implies \, u=0.
$$ 

Moreover, we have
$$
u\in H^s(M) \, \Longleftrightarrow \Lambda^s u\in L^2 \Longleftrightarrow 
\sum\limits_{m\in \N} |\langle \Lambda^su, e_m \rangle_{L^2}|^2=\sum\limits_{m\in \N} |\langle u,e_m \rangle_{L^2}|^2 \lambda_m^{2s}<\infty.
$$
An important consequence of the above, which we shall make use of repeatedly below, is that projections onto $\mathrm{span}(e_1,\dots,e_n)$ do 
not depend on the Sobolev-index:
\begin{equation}\label{eq:projection_independent_of_s}
\sum_{m=1}^n \langle u,e_m\rangle_{L^2} e_m = \sum_{m=1}^n \langle u,e_m^{(s)}\rangle_{s} e^{(s)}_m.
\end{equation}
Let us show that the standard scalar product in $H^1(M)$ and the one defined in \eqref{SP} coincide. By \cite[Def.\ 2.1]{[H]},  
$$
\langle u, v \rangle_{H^1}=\langle u,v \rangle_{L^2}+\langle \nabla u, \nabla v \rangle_{L^2}
$$ 
and by Green's second identity 
\[
\langle \nabla u,\nabla v \rangle_{L^2} = \int_M \langle \nabla u, \nabla v \rangle\, dV=
-\frac{1}{2}\int_M  v\Delta u +  u \Delta v \, dV
\]
Consequently, 
\begin{align*}
2\langle u,v \rangle_{H^1} &= 2\langle u, v \rangle_{L^2} - \langle \Delta u, v \rangle_{L^2} - \langle u,  \Delta v \rangle_{L^2}
=\langle (I - \Delta)u,v \rangle_{L^2}+\langle u, (I - \Delta) v \rangle_{L^2}\\
&  = 2\langle \Lambda^2u,v \rangle_{L^2}
= 2  \langle\Lambda u, \Lambda v \rangle_{L^2}=2 \langle u, v \rangle_1.
\end{align*}
As we shall make use of certain variants of vector-valued Sobolev spaces, we conclude this section by
recalling some basic definitions and properties, referring to \cite{kreut} for details and references.
For any $s\in \R$, $T>0$, we define $L^2((0,T),H^s(M))$ to be the space of measurable functions
$u:(0,T)\to H^s(M)$ such that 
\[
\|u\|_{L^2((0,T),H^s(M))} := \Big(\int_0^T \|u(t,\,.\,)\|_{H^s(M)}^2\,dt \Big)^{1/2} < \infty.
\]
By $H^1((0,T),H^s(M))$ we denote the space of all $u\in L^2((0,T),H^s(M))$ that are weakly
differentiable (with respect to $t$) and such that
\[
\|u\|_{H^1((0,T),H^s(M))}^2 :=  \|u\|_{L^2((0,T),H^s(M))}^2 + \|\pa_t u\|_{L^2((0,T),H^s(M))}^2 <\infty.
\] 

Finally, we shall require the following fundamental result (cf.\ \cite[Th.\ II.5.16]{BoyFab}):
\begin{theorem}\label{th:aubin} (Aubin-Lions-Simon) 
	Let $E_1\subset\subset E\subset E_0$ be Banach spaces such that $E_1$ is compactly embedded in $E$ 
	and $E$ is continuously embedded in $E_0$. For $1\leq p,q \leq \infty$, let
	$$
	W=\{ u\in L^p((0,T); E_1):\, \pa_t u \in L^q((0,T);E_0) \}.
	$$Then,
	\begin{itemize}
		\item[(i)] If $p  < \infty$, then the embedding of $W$ into $L^p((0,T); E)$ is compact.
		
		\item[(ii)] If $p  = \infty$ and $q  >  1$, then the embedding of $W$ into $C([0, T]; E)$ is compact.
	\end{itemize}
\end{theorem}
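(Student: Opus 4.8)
The plan is to derive both parts from the interplay of two complementary sources of control. The compact embedding $E_1\subset\subset E$ provides \emph{spatial} compactness at frozen time, while the bound on $\partial_t u$ in $L^q((0,T);E_0)$ provides \emph{temporal} equicontinuity of the time-translates, but only in the weaker norm of $E_0$. These are reconciled by an Ehrling-type interpolation inequality, and the resulting quantitative estimates are then inserted into a vector-valued Fr\'echet--Kolmogorov compactness criterion (in the form due to Simon) for part (i), and into the Arzel\`a--Ascoli theorem for part (ii). Throughout I would fix a bounded set $B\subset W$, say $\|u\|_{L^p((0,T);E_1)} + \|\partial_t u\|_{L^q((0,T);E_0)} \le R$ for all $u\in B$, and show that $B$ is relatively compact in the respective target space.

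First I would establish the Ehrling lemma: for every $\varepsilon>0$ there is a constant $C_\varepsilon>0$ with
\[
\|w\|_E \le \varepsilon\,\|w\|_{E_1} + C_\varepsilon\,\|w\|_{E_0} \qquad (w\in E_1).
\]
This follows by contradiction. If it failed, there would exist $\varepsilon_0>0$ and vectors $w_n\in E_1$, normalized so that $\|w_n\|_E=1$, with $1 > \varepsilon_0\|w_n\|_{E_1} + n\|w_n\|_{E_0}$; hence $(w_n)$ is bounded in $E_1$ and $\|w_n\|_{E_0}\to 0$. By $E_1\subset\subset E$ a subsequence converges in $E$ to some $w$ with $\|w\|_E=1$, while $E\subset E_0$ forces the same limit to vanish in $E_0$, so $w=0$ --- a contradiction.

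Next I would control time-translates. Writing $(\tau_h u)(t):=u(t+h)$ for $0<h<T$, the fundamental theorem of calculus for the Bochner integral together with H\"older's inequality gives
\[
\|u(t+h)-u(t)\|_{E_0} = \Big\|\int_t^{t+h}\partial_t u(s)\,ds\Big\|_{E_0} \le h^{1-1/q}\,\|\partial_t u\|_{L^q((0,T);E_0)}.
\]
Applying the Ehrling inequality to $w=u(t+h)-u(t)$ and integrating the $p$-th power over $t\in(0,T-h)$ yields, for every $\varepsilon>0$,
\[
\|\tau_h u-u\|_{L^p((0,T-h);E)} \le 2\varepsilon\,\|u\|_{L^p((0,T);E_1)} + C_\varepsilon\,h^{1-1/q}\,\|\partial_t u\|_{L^q((0,T);E_0)},
\]
so that, choosing $\varepsilon$ small and then $h$ small, the translates converge to $u$ in $L^p((0,T-h);E)$ \emph{uniformly} over $B$. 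The remaining hypothesis of the Fr\'echet--Kolmogorov--Simon criterion --- relative compactness in $E$ of the averages $\int_{t_1}^{t_2}u\,dt$ --- is immediate, since these lie in a bounded subset of $E_1$ (by H\"older) and $E_1\subset\subset E$. This proves (i).

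For part (ii) I would invoke Arzel\`a--Ascoli for $E$-valued continuous functions applied to the (H\"older-)continuous representatives, which exist because $\|u(t+h)-u(t)\|_{E_0}\le h^{1-1/q}\|\partial_t u\|_{L^q((0,T);E_0)}$ when $p=\infty$. The $L^\infty((0,T);E_1)$-bound makes $\{u(t):u\in B\}$ bounded in $E_1$, hence relatively compact in $E$, for every $t$; and equicontinuity follows from
\[
\|u(t+h)-u(t)\|_E \le 2\varepsilon\,\|u\|_{L^\infty((0,T);E_1)} + C_\varepsilon\,h^{1-1/q}\,\|\partial_t u\|_{L^q((0,T);E_0)},
\]
which is uniformly small over $B$. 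The hypothesis $q>1$ is essential precisely here: it makes $1-1/q$ strictly positive, so the $E_0$-translation bound decays as a genuine power of $h$ and delivers \emph{uniform} equicontinuity, which fails for $q=1$. I expect the main obstacle to lie not in these interpolation-and-translation estimates, which are routine once Ehrling is available, but in the supporting functional-analytic bookkeeping: the precise vector-valued Fr\'echet--Kolmogorov characterization underlying (i) --- where the crude pointwise translation bound is insufficient at the endpoint $q=1$ and one must instead argue through the averaging condition together with a convolution estimate --- and the passage from the a.e.-defined $u$ to a bona fide element of $C([0,T];E)$ in (ii), where one must verify that the $E_1$-bound persists for \emph{every} $t$ by lower semicontinuity.
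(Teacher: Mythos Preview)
The paper does not prove this theorem: it simply records the statement and cites \cite[Th.\ II.5.16]{BoyFab} (Boyer--Fabrie), treating it as a known prerequisite. There is therefore no ``paper's own proof'' to compare your attempt against.

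That said, your outline is the standard route (Ehrling interpolation, time-translate control via the fundamental theorem, Simon's vector-valued Fr\'echet--Kolmogorov criterion for (i), Arzel\`a--Ascoli for (ii)), and you correctly flag the two genuine technical points: that the crude $h^{1-1/q}$ translation bound fails at $q=1$ in part (i) and must be replaced by an argument through averages or convolution, and that in part (ii) one must pass from the a.e.\ $E_1$-bound to an everywhere bound via weak-$*$ lower semicontinuity before invoking Arzel\`a--Ascoli. With those caveats your sketch is sound, though of course carrying it out in full --- especially the $q=1$ endpoint of (i) --- requires exactly the machinery developed in the cited reference.
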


\section{Galerkin approximation}\label{sec:galerkin}

We fix an orthonormal basis $\{ e_k\}_{k\in \N}$ in $L^2(M)$ consisting of eigenfunctions of the Laplace-Beltrami operator,  
as described in Section \ref{diffprelsec}
and look for approximate solutions to \eqref{main-eq}, \eqref{ic} in the form
\begin{equation}
\label{app-1}
u_n(t,\mx)=\sum\limits_{k=1}^n \alpha^n_k(t) e_k(\mx).
\end{equation} 

Next, we insert $u_n$ from \eqref{app-1} into \eqref{main-eq} and look for $\alpha_k$, $k=1,\dots, n$, so that \eqref{main-eq} is 
satisfied in the space $\mathrm{span}( e_1,\dots, e_n)$. In other words, we multiply the expression
\begin{equation}
\label{app-12}
\pa_t u_n +\Div \mff_{\mx}(u_n)=\Div(\Div ( A_{\mx}(u_n) ) )
\end{equation} 
by $e_j$ for every $j=1,\dots, n$, and integrate over $M$. 
We get after taking into account orthonormality of the basis $(e_k)_{k\in \N}$:
\begin{equation}
\label{app-2}
\begin{split}
\dot{\alpha}^n_j(t)& =\int_M \Big\langle \mff_{\mx}(\sum\limits_{k=1}^n {\alpha_k^n}(t) e_k(\mx)),  \nabla e_j(\mx)\Big\rangle  dV(\mx) \\
&+\int_M \mathrm{tr} \Big(A_{\mx}\Big(\sum\limits_{k=1}^n {\alpha_k^n}(t) e_k(\mx)\Big)  \tilde{H}^{ e_j}(t,\mx)\Big) dV(\mx).
\end{split}
\end{equation}

Using Lemma \ref{divdiv-IbP} and conditions \eqref{bnd-1}, one can prove existence of solutions to the latter system supplemented with appropriate initial data. Indeed, the standard Cauchy theorem provides existence of a local solution and conditions \eqref{bnd-1} enable us to extend the solution for an arbitrary time interval as shown in e.g. \cite[Theorem 5.2.1]{Oks} even in the stochastic case, which we shall consider later. More precisely, we have the following lemma.

\begin{lemma}
\label{L1} Under the assumption \eqref{bnd-1}, for any fixed $n\in \N$, the system of ODEs \eqref{app-2} with initial data $\alpha_j(0)=\alpha_{j0} \in \R$ has a globally defined solution.
\end{lemma}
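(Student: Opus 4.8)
The plan is to view the system \eqref{app-2} as a first-order autonomous (in the deterministic case) ODE system $\dot{\boldsymbol\alpha} = F(\boldsymbol\alpha)$ on $\R^n$, where $\boldsymbol\alpha = (\alpha^n_1,\dots,\alpha^n_n)$, and to establish global existence by the standard two-step argument: local existence via Picard--Lindel\"of, followed by an a~priori bound that rules out finite-time blow-up. First I would check that the right-hand side $F$ is (locally Lipschitz) continuous in $\boldsymbol\alpha$: each component is a finite sum of integrals over the compact manifold $M$ of expressions built from $\boldsymbol\alpha$ through the $C^1$ map $\mff$ and the $C^2$ map $A$ evaluated at $u_n = \sum_k \alpha_k e_k$, paired against the fixed smooth fields $\nabla e_j$ and $\tilde H^{e_j}$. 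Since the $e_k$ are smooth and $M$ is compact, the integrands depend continuously (indeed $C^1$) on $\boldsymbol\alpha$, and differentiation under the integral sign together with the regularity of $\mff$ and $A$ gives local Lipschitz continuity. Picard--Lindel\"of then yields a unique maximal solution on some interval $[0,T_{\max})$.

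The second and main step is the a~priori energy estimate that forces $T_{\max} = +\infty$. The natural quantity is $E(t) := \tfrac12\|u_n(t,\cdot)\|_{L^2(M)}^2 = \tfrac12\sum_{k=1}^n (\alpha^n_k(t))^2$, using orthonormality of the $e_k$. Multiplying \eqref{app-2} by $\alpha^n_j(t)$ and summing over $j=1,\dots,n$ reconstructs $\langle \pa_t u_n, u_n\rangle_{L^2}$ on the left and, on the right, exactly the expression $\int_M \Div\Div A_\mx(u_n)\, u_n\, dV$ (via Lemma \ref{lem:trace_identity}) together with the flux term $\int_M \langle \mff_\mx(u_n),\nabla u_n\rangle\, dV$. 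Here I would invoke Lemma \ref{Lpar}(i): its estimate \eqref{P1} bounds the diffusion contribution by $-c\|\nabla u_n\|_{L^2}^2 + C\max\{1,\|u_n\|_{L^2}^2\}$, so the bad gradient term has a favorable sign and can be discarded (or absorbed). The flux term is handled by \eqref{bnd-1} together with a Peter--Paul inequality to absorb $\|\nabla u_n\|_{L^2}^2$ into the $-c\|\nabla u_n\|_{L^2}^2$ reserve, leaving a remainder controlled by $C(1+\|u_n\|_{L^2}^2)$. This yields a differential inequality of the form $\tfrac{d}{dt}E(t) \le C(1 + E(t))$.

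Grönwall's inequality then gives $E(t) \le (E(0)+Ct)e^{Ct}$, hence $\|u_n(t,\cdot)\|_{L^2(M)}$, and therefore $|\boldsymbol\alpha(t)|$, stays bounded on every bounded time interval. Since a maximal solution of an ODE with a locally Lipschitz right-hand side that remains in a bounded set cannot have finite $T_{\max}$, we conclude $T_{\max}=+\infty$, proving global existence. I expect the main obstacle to be the flux estimate: one must be careful to route the convective term $\int_M \langle\mff_\mx(u_n),\nabla u_n\rangle\,dV$ through \eqref{bnd-1} and absorb the resulting $\|\nabla u_n\|_{L^2}$ into the dissipation with the correct constants, rather than ending up with an uncontrolled $\|\nabla u_n\|_{L^2}^2$ term. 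The reference to \cite[Theorem 5.2.1]{Oks} signals that the same Grönwall/absorption scheme is intended to carry over verbatim to the stochastic system treated later.
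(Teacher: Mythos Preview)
Your argument is correct, but it takes a noticeably heavier route than the paper's. The paper observes that for fixed $n$ the functions $e_1,\dots,e_n$, $\nabla e_j$, and $\tilde H^{e_j}$ are fixed smooth objects on the compact manifold $M$, so the growth bound \eqref{bnd-1} on $\mff_\mx(\lambda)$ and $A_\mx(\lambda)$ translates \emph{directly} into a linear-growth bound on the right-hand side of \eqref{app-2}: one simply has $|F(\boldsymbol\alpha)| \le \bar C(1+|\boldsymbol\alpha|)$ for some $n$-dependent constant, because $|u_n(\mx)| \le \sum_k|\alpha_k^n|\,|e_k(\mx)| \le C_n|\boldsymbol\alpha|$ pointwise. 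Integrating this gives $|\boldsymbol\alpha(t)| \le |\boldsymbol\alpha(0)| + \bar C\int_0^t(1+|\boldsymbol\alpha(s)|)\,ds$, and Gr\"onwall finishes the job. No parabolicity, no Lemma~\ref{Lpar}, no Peter--Paul absorption is needed.

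Your energy approach via $E(t)=\tfrac12\|u_n\|_{L^2}^2$, Lemma~\ref{Lpar}(i), and absorption of $\|\nabla u_n\|_{L^2}^2$ into the dissipation is essentially the argument of Lemma~\ref{L2} rewritten in differential form and then fed into the blow-up criterion. It works, and it has the advantage of yielding $n$-independent constants and the extra $\|\nabla u_n\|_{L^2}^2$ control along the way---but for Lemma~\ref{L1} itself that information is not required, and the paper deliberately postpones it to Lemma~\ref{L2}. The trade-off is that the paper's proof is a two-line consequence of linear growth of the vector field $F$, whereas yours invokes the full structural hypotheses \eqref{strict} and \eqref{bnd-1} and the integration-by-parts machinery of Lemmas~\ref{divdiv-IbP}--\ref{Lpar}.
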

\begin{proof}
According to the Cauchy theorem, system \eqref{app-2} with the given initial data has a solution defined on $[0,T)$ for some $T>0$. 
Then applying the bounds \eqref{bnd-1} to \eqref{app-2}, we obtain an estimate of the form
\[
|\alpha(t)| \le |\alpha(0)| + \bar C \int_0^t (1+|\alpha(s)|)\,ds,
\]
where $\alpha = (\alpha_1,\dots,\alpha_n)$ and $|\,|$ is any norm on $\R^n$. Consequently, 
$|\alpha(t)| \le (|\alpha(0)|+\bar C t) e^{\bar C t}$ by Gronwall's inequality. Hence
the functions $\alpha^n_j$, $j=1,\dots, n$, cannot blow up as $t\nearrow T$, 
implying that they can be extended to all of $\R$.
\end{proof}

If we rewrite \eqref{ic} as
$$
u_0(\mx)=\sum\limits_{k=0}^\infty \alpha_{k0} e_k(\mx)
$$ and take $\alpha^n_j(0)=\alpha_{j0}$ in \eqref{app-2}, we know that there exists a sequence of approximate 
solutions to \eqref{main-eq}, \eqref{ic} in the sense that \eqref{app-2} is satisfied for any $n\in \N$. 
Denote this sequence by $(u_n)$. We want to prove that it converges strongly in $L^2((0,T)\times M)$ 
(for any $T>0$).

\begin{lemma}
\label{L2}
The sequence $(u_n)$ is bounded in $L^2((0,T);H^1(M))$.
\end{lemma}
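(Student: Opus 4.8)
The plan is to derive the energy estimate by testing the Galerkin equation against $u_n$ and then integrating in time. Concretely, I would multiply the $j$-th equation in \eqref{app-2} by $\alpha^n_j(t)$ and sum over $j=1,\dots,n$. Because the basis $(e_k)$ is orthonormal in $L^2(M)$, the left-hand side becomes $\sum_j \dot\alpha^n_j \alpha^n_j = \tfrac12 \frac{d}{dt}\|u_n(t,\cdot)\|_{L^2(M)}^2$. On the right-hand side, the two integrals reassemble (by linearity of the basis expansion and the fact that projections onto $\mathrm{span}(e_1,\dots,e_n)$ respect the $L^2$ pairing) into exactly the quantity $\int_M \Div\Div A_{\mx}(u_n)\, u_n \, dV - \int_M \langle \mff_{\mx}(u_n), \nabla u_n\rangle\, dV$, where I use Lemma \ref{lem:trace_identity} to identify the trace term with $\int_M \Div\Div A_{\mx}(u_n)\, u_n\, dV$ and Lemma \ref{divdiv-IbP} in the form $\int_M \Div \mff_{\mx}(u_n)\, u_n\, dV = -\int_M \langle \mff_{\mx}(u_n),\nabla u_n\rangle\, dV$. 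This reduces the identity to a statement purely about $u_n$, with no reference to the ODE indices.

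Next I would apply Lemma \ref{Lpar}(i), which bounds the diffusion term by $-c\int_M \|\nabla u_n\|_g^2\, dV + C\max\{1,\int_M |u_n|^2\, dV\}$, giving the crucial negative coercive contribution from parabolicity. For the flux term, the bound \eqref{bnd-1} gives $|\langle \mff_{\mx}(u_n),\nabla u_n\rangle| \le \bar C(1+|u_n|)\|\nabla u_n\|_g$, which I would absorb using a Peter--Paul (Young) inequality: the $\|\nabla u_n\|_g^2$ portion is taken small enough to be soaked up by the $-c\int \|\nabla u_n\|_g^2$ term, leaving a remainder controlled by $1 + |u_n|^2$. Combining, I obtain a differential inequality of the form
\[
\frac{d}{dt}\|u_n(t,\cdot)\|_{L^2(M)}^2 + c'\int_M \|\nabla u_n\|_g^2\, dV \le C'\big(1 + \|u_n(t,\cdot)\|_{L^2(M)}^2\big)
\]
for constants $c',C'>0$ independent of $n$ and $t$.

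From here the argument is standard. Dropping the nonnegative gradient term and applying Gronwall's inequality yields a uniform bound $\|u_n(t,\cdot)\|_{L^2(M)}^2 \le (\|u_n(0,\cdot)\|_{L^2(M)}^2 + C'T)e^{C'T}$ on $[0,T]$. The initial norms are bounded because $u_n(0,\cdot)$ is the $L^2$-orthogonal projection of $u_0 \in L^2(M)$ onto $\mathrm{span}(e_1,\dots,e_n)$, hence $\|u_n(0,\cdot)\|_{L^2(M)} \le \|u_0\|_{L^2(M)}$. Feeding this uniform $L^\infty((0,T);L^2(M))$ bound back into the integrated differential inequality and retaining the gradient term then controls $\int_0^T \int_M \|\nabla u_n\|_g^2\, dV\, dt$ uniformly in $n$, which together gives a uniform bound on $\|u_n\|_{L^2((0,T);H^1(M))}$.

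I expect the main obstacle to be the bookkeeping that turns the componentwise ODE identity \eqref{app-2} back into the coordinate-free integral identity \eqref{P3}: one must verify carefully that testing against $u_n = \sum_k \alpha^n_k e_k$ reproduces precisely the integrals appearing in Lemmas \ref{divdiv-IbP} and \ref{lem:trace_identity}, using that $u_n$ is smooth (being a finite combination of the smooth eigenfunctions $e_k$) so those lemmas apply, and that the trace term in \eqref{app-2} indeed equals $\int_M \Div\Div A_{\mx}(u_n)\, u_n\, dV$. Everything downstream of that identity is a routine application of the already-established Lemma \ref{Lpar}(i) and Gronwall's inequality.
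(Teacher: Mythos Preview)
Your proposal is correct and follows essentially the same route as the paper: test the Galerkin equation against $u_n$, invoke Lemma~\ref{Lpar}(i) for the diffusion term, absorb the flux term via Peter--Paul, apply Gronwall for the $L^\infty_t L^2_\mx$ bound, and re-insert to control $\int_0^T\|\nabla u_n\|_{L^2}^2$. The only cosmetic difference is that the paper phrases the first step as ``multiply \eqref{app-12} by $u_n$ and integrate'' rather than summing the componentwise ODEs against $\alpha_j^n$, but these are the same operation since $u_n\in\mathrm{span}(e_1,\dots,e_n)$.
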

\begin{proof}
We first multiply \eqref{app-12} by $u_n$ and integrate over $(0,t)\times M$ 
for any fixed $t\in (0,T)$. We have after integration by parts (cf.\ Lemma \ref{divdiv-IbP}):
\begin{equation}
\label{H1-1}
\begin{split}
&\frac{1}{2}\int_M  |u_n(t,\mx)|^2  dV(\mx) - \frac{1}{2}\int_M |u_0(\mx)|^2 dV(\mx) 
\\ &+\int_0^t \int_M \Div(A_{\mx}(u_n(\tau,\mx)))\cdot\nabla u_n(\tau,x)\, dV(\mx)d\tau \\
&  \hspace*{7em} =\int_0^t \int_M \mff_{\mx}(u_n(\tau,\mx)) 
\cdot\nabla u_n(\tau,\mx) d\mx d\tau.
\end{split}
\end{equation} Abbreviating $u_n(\tau,\mx)$ by $u_n$ henceforth, Lemma \ref{Lpar} (i) shows
\begin{align*}
\int_0^t \int_M \Div(A_{\mx}(u_n))\cdot\nabla u_n \, dV(\mx)d\tau \geq c  \int_0^t & \int_M   \| \nabla u_n \|_g^2 dV(\mx)d\tau\\
&- CT \mathrm{vol}(M) -C\int_0^t \int_M | u_n |^2\, dV(\mx) d\tau.
\end{align*}
Also, the Peter-Paul inequality gives
\begin{align*}
\int_0^t \int_M \mff_{\mx}(u_n) \cdot\nabla u_n \, dV(\mx) d\tau \leq 
\frac{c}{2}\int_0^t \int_M \| & \nabla u_n \|_g^2\, dV(\mx)d\tau\\ 
&+ \frac{2}{c}\int_0^t \int_M \|\mff_{\mx}(u_n)\|_g^2\, dV(\mx) d\tau.
\end{align*} 
Inserting this into \eqref{H1-1} and using \eqref{bnd-1} we see that for certain constants $C_1, C_2>0$
\begin{equation}\label{eq:18}
\begin{split}
\frac{1}{2}\int_M  & |u_n(t,\mx)|^2\, dV(\mx) + \frac{c}{2}\int_0^t \int_M \|\nabla u_n\|^2 \, dV(\mx)\\
& \le CT \mathrm{vol}(M) + \frac{1}{2}\int_M |u_0(\mx)|^2 dV(\mx) + \frac{2}{c} \int_0^t \int_M \|\mff_{\mx}(u_n)\|_g^2 dV(\mx)\\ 
& \hspace*{5em} + \int_0^t \int_M |u_n|^2 dV(\mx) d\tau 
\le C_1 + C_2 \int_0^t \int_M |u_n|^2 dV(\mx) d\tau
\end{split}
\end{equation}
Therefore Gronwall's inequality implies that $\int_M  |u_n(t,\mx)|^2\, dV(\mx) \le C_T$ for some constant $C_T$
depending only on $T$. Re-inserting into \eqref{eq:18}, we arrive at the desired conclusion:
\begin{equation}
\label{H1-2}
\int_0^T \int_M \left(|u_n |^2+\| \nabla u_n \|_g^2 \right) dV(\mx) dt \leq \widetilde C_T.
\end{equation} 
\end{proof} 
Next, we need to estimate the $t$-derivative of $(u_n)$.  
\begin{lemma}
\label{Lt-der}
For any $T>0$ there exists a constant $C>0$ such that
$$
\|u_n\|_{H^1((0,T);H^{-1}(M))} \leq C.
$$
\end{lemma}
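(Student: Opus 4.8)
The plan is to bound $\|\pa_t u_n\|_{L^2((0,T);H^{-1}(M))}$ by testing the Galerkin equation against an arbitrary $v\in H^1(M)$ and controlling each resulting term using the $L^2((0,T);H^1(M))$-bound from Lemma \ref{L2}. The crucial structural fact I would exploit is \eqref{eq:projection_independent_of_s}: since the projection $P_n$ onto $\mathrm{span}(e_1,\dots,e_n)$ is orthogonal simultaneously for all the Sobolev scalar products, testing \eqref{app-12} against a general $v$ reduces to testing against $P_nv$, and $\|P_nv\|_{H^1}\le \|v\|_{H^1}$. Concretely, for fixed $t$ and any $v\in H^1(M)$ with $\|v\|_{H^1}\le 1$, I would write $\langle \pa_t u_n, v\rangle = \langle \pa_t u_n, P_n v\rangle$, and then use that $u_n$ solves \eqref{app-12} in $\mathrm{span}(e_1,\dots,e_n)$ to replace $\pa_t u_n$ by $-\Div\mff_\mx(u_n)+\Div\Div A_\mx(u_n)$ paired against $P_nv$.

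Next I would integrate by parts on the spatial integrals to move derivatives off the nonlinear terms and onto $P_n v$. For the first-order term, Lemma \ref{divdiv-IbP}-type integration (Stokes) gives
\begin{align*}
\Big|\int_M \Div\mff_\mx(u_n)\,P_nv\,dV\Big| = \Big|\int_M \langle \mff_\mx(u_n),\nabla (P_nv)\rangle\,dV\Big| \le \|\mff_\mx(u_n)\|_{L^2(M)}\,\|\nabla(P_nv)\|_{L^2(M)},
\end{align*}
and for the second-order term, Lemma \ref{lem:trace_identity} (or the one-step integration by parts \eqref{div-div-rule}) yields
\begin{align*}
\Big|\int_M \Div\Div A_\mx(u_n)\,P_nv\,dV\Big| = \Big|\int_M \Div A_\mx(u_n)\,\nabla(P_nv)\,dV\Big| \le \|\Div A_\mx(u_n)\|_{L^2(M)}\,\|\nabla(P_nv)\|_{L^2(M)}.
\end{align*}
By the bounds \eqref{bnd-1}, $\|\mff_\mx(u_n)\|_{L^2(M)}$ and $\|\Div A_\mx(u_n)\|_{L^2(M)}$ are each dominated by $\bar C(1+\|u_n\|_{L^2(M)})$. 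Taking the supremum over $\|v\|_{H^1}\le 1$ then gives a pointwise-in-$t$ estimate $\|\pa_t u_n(t)\|_{H^{-1}(M)}\le \bar C(1+\|u_n(t)\|_{L^2(M)})(1+\|\nabla u_n(t)\|_{L^2(M)})$, which I would square, integrate in $t$ over $(0,T)$, and bound using \eqref{H1-2}.

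The main obstacle, and the point requiring care, is the handling of the second-order term: one must check that the integration by parts used to pass from $\int \Div\Div A_\mx(u_n)\,P_nv$ to $\int \Div A_\mx(u_n)\,\nabla(P_nv)$ is legitimate at the regularity available. Since each $u_n$ lies in $\mathrm{span}(e_1,\dots,e_n)\subset C^\infty(M)$, $A_\mx(u_n)$ is as regular as the $C^2$ dependence of $A$ allows and $P_nv$ is smooth, so Lemma \ref{divdiv-IbP} applies directly and no density argument is needed at this stage. The only term that genuinely needs a derivative on $u_n$ rather than on the test function is the $\langle A'_\mx(u_n)\nabla u_n,\nabla(P_nv)\rangle$ contribution appearing after expanding $\Div A_\mx(u_n)$ as in the proof of Lemma \ref{Lpar}(i); this is why the $\|A_\mx(\lambda)\|_{L^\infty}$ and $\|\Div A_\mx(\lambda)\|_{L^\infty}$ bounds in \eqref{bnd-1}, together with the $L^2((0,T);H^1(M))$ control of $u_n$, are exactly what is needed to close the estimate. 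Assembling these pieces produces the claimed uniform bound $\|u_n\|_{H^1((0,T);H^{-1}(M))}\le C$.
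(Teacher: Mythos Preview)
Your argument is correct and is essentially the duality formulation of the paper's proof: the paper bounds $\|\pa_t u_n\|_{H^{-1}}$ directly by invoking the continuity of $\Div\colon L^2(M,TM)\to H^{-1}(M)$ applied to $\mff_\mx(u_n)$ and $\Div A_\mx(u_n)$, whereas you unwind this continuity by pairing against test functions and integrating by parts. Your explicit use of \eqref{eq:projection_independent_of_s} to pass from $v$ to $P_nv$ with $\|P_nv\|_{H^1}\le\|v\|_{H^1}$ is in fact a point the paper glosses over (it tacitly treats \eqref{app-12} as a genuine equality), so your version is slightly more careful; one small cleanup is that the pointwise bound is more naturally additive, $\|\pa_t u_n(t)\|_{H^{-1}}\le C(1+\|u_n(t)\|_{L^2}+\|\nabla u_n(t)\|_{L^2})$, which integrates directly via \eqref{H1-2}.
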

\begin{proof}
Recall that
\begin{equation}
\label{H-1}
\|u_n\|_{H^1((0,T);H^{-1}(M))}^2 = \int_0^T \|u_n(t,\mx)\|_{H^{-1}(M)}^2 dt + \int_0^T \| \partial_t u_n(t,\mx) \|_{H^{-1}(M)}^2 dt.
\end{equation} 
Since, according to Lemma \ref{L2}, we have 
$$\
\|u_n\|_{L^2((0,T);H^{-1}(M))}^2\leq \|u_n\|_{L^2((0,T);H^{1}(M))}^2 \leq C<\infty,
$$ 
it is enough to bound the second term on the right-hand side of \eqref{H-1}.

Here, using \eqref{app-12} and the continuity of the differential operator $\Div:L^2(M,TM)\to H^{-1}(M)$,
\begin{align*}
\int_0^T & \| \partial_t u_n(t,\mx) \|_{H^{-1}(M)}^2\, dt  =  \int_0^T \| -\Div  \mff_{\mx}(u_n) 
+ \Div (\Div A_{\mx}(u_n)) \|_{H^{-1}(M)} ^2\,dt\\
&\leq 2\int_0^T \left( \|\Div \mff_{\mx}(u_n)\|^2_{H^{-1}(M)} +\|\Div( \Div A_{\mx}(u_n))\|^2_{H^{-1}(M)} \right) dt\\
& \leq  \tilde C \int_0^T\left( \|\mff_{\mx}(u_n)\|^2_{L^2(M)} +\| \Div A_{\mx}(u_n)\|^2_{L^{2}(M)} \right)\, dt. 
\end{align*}
for some $\tilde{C}>0$. Finally, \eqref{bnd-1}, together with Lemma \ref{L2}, imply the boundedness of this latter expression. 
\end{proof}

Now, we can prove the existence theorem for \eqref{main-eq}, \eqref{ic}.

\begin{theorem}
\label{THM1}
The Cauchy problem \eqref{main-eq}, \eqref{ic} admits a weak solution belonging to $L^2((0,T);H^{1}(M))$ 
if $\mff \in C^{1}(M\times \R)$ and $A \in C^{2}(M\times \R)$ 
satisfy \eqref{bnd-1}.
\end{theorem}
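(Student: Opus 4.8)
The plan is to extract a convergent subsequence from the Galerkin approximations $(u_n)$ and to pass to the limit in the weak formulation encoded by \eqref{app-2}. By Lemma \ref{L2} the sequence $(u_n)$ is bounded in $L^2((0,T);H^1(M))$, and by Lemma \ref{Lt-der} the time derivatives $(\pa_t u_n)$ are bounded in $L^2((0,T);H^{-1}(M))$. I would therefore invoke Theorem \ref{th:aubin} with the Gelfand triple $E_1 = H^1(M) \subset\subset E = L^2(M) \subset E_0 = H^{-1}(M)$, the first embedding being compact by Rellich's theorem, and with $p=q=2$. Part (i) then yields that, after passing to a subsequence (not relabeled), $u_n \to u$ strongly in $L^2((0,T);L^2(M)) = L^2((0,T)\times M)$ for some limit $u$; simultaneously, the bounds of Lemmas \ref{L2} and \ref{Lt-der} give (along a further subsequence) $u_n \rightharpoonup u$ weakly in $L^2((0,T);H^1(M))$ and $\pa_t u_n \rightharpoonup \pa_t u$ weakly in $L^2((0,T);H^{-1}(M))$, so that $u$ inherits the regularity $u\in L^2((0,T);H^1(M))\cap H^1((0,T);H^{-1}(M))$.

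The crucial step is passing to the limit in the nonlinear terms. From the strong $L^2$-convergence I may assume, after a further subsequence, that $u_n\to u$ almost everywhere on $(0,T)\times M$, whence by continuity of $\mff$ and $A$ we get $\mff_\mx(u_n)\to\mff_\mx(u)$ and $A_\mx(u_n)\to A_\mx(u)$ pointwise a.e., while the growth bound \eqref{bnd-1} yields $\|\mff_\mx(u_n)\|_g^2 + \|A_\mx(u_n)\|_g^2 \le C(1+|u_n|^2)$. Since the $L^2$-convergent family $\{|u_n|^2\}$ is uniformly integrable, so are $\{\|\mff_\mx(u_n)\|_g^2\}$ and $\{\|A_\mx(u_n)\|_g^2\}$, and the Vitali convergence theorem upgrades pointwise convergence to $\mff_\mx(u_n)\to\mff_\mx(u)$ and $A_\mx(u_n)\to A_\mx(u)$ strongly in $L^2((0,T)\times M)$. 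Now I fix $j\in\N$ and $\psi\in C^1([0,T])$ with $\psi(T)=0$, use that $\alpha^n_j(t)=\langle u_n(t),e_j\rangle_{L^2}$, multiply \eqref{app-2} by $\psi$ and integrate by parts in $t$, obtaining
\begin{equation*}
-\int_0^T \langle u_n, e_j\rangle_{L^2}\,\dot\psi\,dt - \langle u_0, e_j\rangle_{L^2}\,\psi(0)
= \int_0^T \psi\Big( \int_M \langle \mff_\mx(u_n), \nabla e_j\rangle\, dV + \int_M \mathrm{tr}\big(A_\mx(u_n)\,\tilde H^{e_j}\big)\, dV\Big)\, dt,
\end{equation*}
where I used $\alpha^n_j(0)=\langle u_0,e_j\rangle_{L^2}$ for $n\ge j$. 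Since $e_j$ is smooth, $\nabla e_j$ and $\tilde H^{e_j}$ are bounded on the compact $M$, so the strong $L^2$-convergence just established lets me pass to the limit in every term.

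This produces the weak formulation for $u$ tested against each $\psi e_j$; by linearity and the completeness of the system $(e_j)$ in $H^2(M)$ (cf.\ \eqref{eq:Hs_basis}), together with $\mff_\mx(u),A_\mx(u)\in L^2((0,T)\times M)$, it extends to all test functions of the form $\psi\varphi$ with $\varphi\in H^2(M)$, which is the desired notion of weak solution in $L^2((0,T);H^1(M))$. It remains to check the initial condition: because $u\in L^2((0,T);H^1(M))\cap H^1((0,T);H^{-1}(M))$, the standard embedding into $C([0,T];L^2(M))$ makes $u(0)$ well defined in $L^2(M)$, and choosing $\psi$ with $\psi(0)\ne 0$ and comparing the boundary terms forces $\langle u(0)-u_0,e_j\rangle_{L^2}=0$ for all $j$, hence $u(0)=u_0$. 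I expect the main obstacle to be precisely this passage to the limit in the nonlinearities: weak $L^2$-convergence alone would not suffice for $\mff_\mx(u_n)$ and $A_\mx(u_n)$, so the whole argument hinges on converting the Aubin--Lions compactness into strong (hence a.e.) convergence and then controlling the nonlinear growth through the uniform integrability supplied by \eqref{bnd-1} via Vitali's theorem. A secondary technical point is the correct identification of the initial trace, which relies on the time-continuity afforded jointly by the two a priori bounds.
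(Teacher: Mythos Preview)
Your proof is correct and follows essentially the same route as the paper: Aubin--Lions--Simon compactness from the a~priori bounds of Lemmas~\ref{L2} and~\ref{Lt-der}, followed by passage to the limit in the Galerkin weak formulation. The only technical differences are that the paper tests against the projections $\varphi_n$ of an arbitrary $\varphi\in C^\infty_c([0,T)\times M)$ (so the density step is built in), whereas you test against each $\psi(t)e_j(\mx)$ and then extend by completeness of $(e_j)$ in $H^2(M)$; and you make explicit, via Vitali, the passage $\mff_\mx(u_n)\to\mff_\mx(u)$, $A_\mx(u_n)\to A_\mx(u)$ in $L^2$ and the identification of the initial trace, both of which the paper leaves implicit.
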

\begin{proof} By Lemmas \ref{L2} and \ref{Lt-der} we have that $(u_n)$ is bounded in $L^2((0,T),H^1(M))$ and that
$(\pa_t u_n)$ is bounded in $L^2((0,T),H^{-1}(M))$. We may therefore apply Theorem \ref{th:aubin} (i) to the triple
$H^1(M)\subset\subset L^2(M) \subset H^{-1}(M)$ with $p=q=2$ to conclude that $(u_n)$ possesses a subsequence
(again denoted by $(u_n)$) that converges strongly in $L^2((0,T),L^2(M))$ to some $u\in L^2((0,T),L^2(M))$.
%From Lemma \ref{L2} and Lemma \ref{Lt-der}, we conclude by interpolation that $(u_n)$ is bounded in $H^s((0,T);H^{1-s}(M))$ [Taylor]. 
%Taking $s=1/2$, we conclude that $(u_n)$ is strongly precompact in $L^2((0,T)\times M)$ and it admits a subsequence 
%(again denoted by $(u_n)$) strongly converging towards $u\in L^2(M)$. 

Let us show that $u$ will represent a weak solution to \eqref{main-eq}, \eqref{ic}. 
Take an arbitrary $\varphi\in C^\infty_c([0,T)\times M)$ and denote by $\varphi_n$ its projection on 
$\mathrm{span}\{e_k\mid 1\le k \le n\}$, i.e.\ (cf.\ \eqref{eq:projection_independent_of_s}), %in the sense of \eqref{app-1}. 
\[
\vphi_n(t,\mx) := \sum_{k=1}^n \langle \vphi(t,\,.\,),e_k^{(2)} \rangle_2 e_k^{(2)}(\mx) = 
\sum_{k=1}^n \langle \vphi(t,\,.\,),e_k \rangle_{L^2} e_k(\mx).
\]
We then have, taking into account that $u_n$ 
satisfies \eqref{main-eq} in the space $\mathrm{span}\{e_k\mid 1\le k \le n\}$ and using Lemma \ref{lem:trace_identity}:
\begin{align*}
0 &= -\int_0^T\int_M \big( \pa_t u_n + \Div(\mff_{\mx}(u_n)) -\Div(\Div(A_{\mx}(u_n)))\big) \vphi_n\, dV(\mx)\, dt\\
&= \int_0^T\int_M \Big( u_n \pa_t \varphi_n+ \mff_{\mx}(u_n)\cdot\nabla \varphi_n + \mathrm{tr}(A_{\mx}(u_n)\circ 
\tilde H^{\vphi_n})\Big) dV(\mx) dt\\
&\hphantom{---} + \int_M u_0(\mx) \vphi_n(0,\mx)\, dV(\mx).
\end{align*}
Consequently, for any $n\in \N$ we obtain
\begin{align*}
&\int_0^T\int_M \big(u \pa_t \varphi+ \mff_{\mx}(u)\cdot\nabla \varphi + \mathrm{tr}(A_{\mx}(u)\circ 
\tilde H^{\vphi} )\, dV(\mx)\, dt\\& + \int_M u_0(\mx) \vphi(0,\mx)\, dV(\mx) \\
&=\int_0^T\int_M \Big( (u-u_n) \pa_t \varphi+ (\mff_{\mx}(u)-\mff_{\mx}(u_n))\nabla \varphi\Big) dV(\mx) dt\\
&+ \int_0^T \int_M \mathrm{tr}( (A_{\mx}(u)-A_{\mx}(u_n)) \circ \tilde{H}^\varphi)dV(\mx) dt+\int_M u_0(\mx)(\varphi(0,\mx)-\vphi_n(0,\mx))\, dV(\mx)\\
&+\int_0^T\int_M \Big( u_n \pa_t (\varphi-\varphi_n)+ \mff_{\mx}(u_n)\nabla (\varphi-\varphi_n)\Big) dV(\mx) dt\\
&+ \int_0^T \int_M \mathrm{tr}(A_{\mx}(u_n) \circ \tilde{H}^{(\varphi-\varphi_n)})dV(\mx) dt
\end{align*}
Letting $n\to \infty$ and using $u_n\to u$ in $L^2((0,T)\times M)$ and $\varphi_n\to \varphi$ in $H^2((0,T)\times M)$, 
we conclude that $u$ is indeed a weak solution to \eqref{main-eq}, \eqref{ic}.
\end{proof}
%%%%%%%%%%%%%%
If we additionally assume that the initial value is $C^1$, as well as stronger 
boundedness assumptions than \eqref{bnd-1} on flux and diffusion tensor, we can prove that the solution 
also increases its regularity:
\begin{theorem}\label{th:increased_regularity}
Suppose that 
\begin{equation}\label{eq:stronger_boundedness}
\begin{split}
\|A_\mx(\lambda)\|_{C^1(\R\times M)} 
+ \|\lambda A'_\mx(\lambda)\|_{L^\infty(\R\times M)} + \|A''_\mx(\lambda)&\|_{L^\infty(\R\times M)}\\
&+ \|\mff_{\mx}(\lambda) \|_{L^\infty(\R\times M)}  \leq C
\end{split}
\end{equation}
for some $C>0$. Moreover, let $u_0\in C^1(M)$. Then the
weak solution to the initial value problem \eqref{main-eq}, \eqref{ic} constructed in Theorem \ref{THM1} belongs to 
$H^{1,2}((0,T)\times M)$ (i.e., Sobolev order $1$ in $t$ and order $2$ in $\mx$).
\end{theorem}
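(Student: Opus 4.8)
The plan is to follow the remark recorded after \eqref{stand} and \emph{localize} the problem. On a coordinate chart, writing the one-form $\Div A_\mx(u)$ and raising its index, the right-hand side of \eqref{main-eq} takes the divergence form $\Div(\tilde a\,\nabla u + F)$, where $\tilde a(\mx,t):=A'_\mx(u(t,\mx))$ acts on $\nabla u$ as a uniformly elliptic endomorphism by \eqref{strict}, and where the vector field $F$ collects the non-principal contributions, namely the metric/Christoffel terms produced by $\nabla_a A^a{}_b(\mx,u)$ together with $-\mff_\mx(u)$. Since \eqref{eq:stronger_boundedness} bounds $\|A\|_{C^1}$ and $\|\mff\|_{L^\infty}$ and the Christoffel symbols are smooth on the compact $M$, the field $F$ is bounded. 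Thus the weak solution $u$ of Theorem \ref{THM1} is, locally, a weak solution of a uniformly parabolic divergence-form equation with bounded measurable coefficients and bounded inhomogeneity, while $u_0\in C^1(M)\subset L^\infty(M)$.

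First I would apply De Giorgi--Nash--Moser theory (cf.\ \cite{LSU}) to this divergence-form equation to obtain that $u$ is locally bounded and locally H\"older continuous; by compactness of $M$ and the boundedness of $u_0$ this yields $u\in L^\infty((0,T)\times M)$ and $u\in C^{\alpha,\alpha/2}$ up to $t=0$. Since $(\mx,\lambda)\mapsto A'_\mx(\lambda)$ is continuous and $u$ is H\"older, the frozen coefficient $\tilde a(\mx,t)=A'_\mx(u(t,\mx))$ is itself continuous (indeed H\"older), so I would next invoke interior gradient regularity for divergence-form parabolic equations to upgrade this to $\nabla u\in L^\infty_{\mathrm{loc}}$, a H\"older gradient estimate.

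The a priori bound $\nabla u\in L^\infty$ is the decisive gain. Rewriting the equation locally in \emph{non-divergence} form, $\pa_t u = (A'_\mx(u))^{ij}\pa_i\pa_j u + b(\mx,u,\nabla u)$, the lower-order term $b$ contains the genuinely quadratic gradient contribution $\sim A''_\mx(u)\,|\nabla u|^2$. This term is precisely what defeats a direct Galerkin energy estimate---Lemma \ref{Lpar}(ii) is available only when $\lambda\mapsto A_\mx(\lambda)$ is linear---and controlling it is the main obstacle of the proof. With $\nabla u\in L^\infty$ already in hand and $\|A''\|_{L^\infty}$ bounded by \eqref{eq:stronger_boundedness}, the term $b$ is now bounded, so the equation becomes linear with a uniformly elliptic, H\"older coefficient $(A'_\mx(u))^{ij}$ and an $L^\infty$ right-hand side. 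Classical $L^p$ parabolic regularity then yields $u\in W^{2,1}_p$ locally for every $p<\infty$, and in particular $u\in H^{1,2}$ locally; the $C^1$-regularity of $u_0$ supplies the data regularity needed to carry these estimates up to $t=0$.

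Finally I would globalize by covering $M$ with a finite atlas, multiplying by a subordinate partition of unity, and summing the resulting local $H^{1,2}$ bounds to conclude $u\in H^{1,2}((0,T)\times M)$, i.e.\ Sobolev order $1$ in $t$ and $2$ in $\mx$, as claimed. The two points that require care are verifying that the raised-index rewriting really produces bounded lower-order data under \eqref{eq:stronger_boundedness}, and the passage from the H\"older gradient estimate to $W^{2,1}_2$, which works only because the a priori $L^\infty$-control of $\nabla u$ renders the quadratic gradient nonlinearity harmless.
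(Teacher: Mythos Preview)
Your strategy is essentially the paper's: localize to a chart, rewrite the right-hand side in divergence form $\partial_i(a^{ik}\partial_k u)+\text{lower order}$ with $a^{ik}$ uniformly elliptic by \eqref{strict} and the remaining data bounded by \eqref{eq:stronger_boundedness}, then invoke the classical regularity machinery of \cite{LSU} (first $L^\infty$ and $C^\alpha$ via De~Giorgi--Nash--Moser, then bootstrap to $H^{1,2}$) and patch by compactness. The overall architecture matches.

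Two points of execution differ. First, the paper does not rely on purely interior estimates: it multiplies by a cutoff $\varphi\in C^2_c$ supported in the chart and computes the equation satisfied by $\varphi u$ explicitly (your ``raised-index rewriting'' carried out in detail), thereby producing a parabolic problem with \emph{zero} lateral boundary data on a hypercube. This gives direct access to the global-in-space $L^\infty$ bound \cite[Ch.~III, Th.~7.1]{LSU} and then $C^\alpha$ \cite[Ch.~III, Th.~10.1]{LSU}. Second---and this is the main divergence---once $u$ is H\"older the paper does \emph{not} insert your intermediate Schauder step to obtain $\nabla u\in L^\infty$ and then freeze the quadratic gradient term. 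Instead it observes that on $\{\varphi\equiv 1\}$ the function $u$ solves the original \emph{quasilinear} equation with H\"older lateral trace and $C^1$ initial data, and applies the quasilinear $H^{1,2}$-theorem \cite[Ch.~V, Th.~6.4]{LSU} directly; that result already absorbs the $A''_\mx(u)|\nabla u|^2$ nonlinearity you isolate. Your route is a valid alternative and makes the mechanism more explicit, but note that the gradient Schauder step needs $F$ to be H\"older, not just bounded---this is available here since $A\in C^2$, $\mff\in C^1$ and $u\in C^\alpha$, but your write-up only records $F\in L^\infty$, which would not suffice for that step.
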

\begin{proof}
Since $M$ is compact, it suffices to show that for any $\vphi\in C^2_c(M)$ which is compactly supported
in the domain of a chart $(U,w)$ of $M$ we have $\vphi u\in H^{1,2}([0,T]\times U)$. To reduce the
notational burden we will use the same letters for the local expressions of all involved 
quantities in the chart $(U,w)$ (i.e., $A$ for $w_*A$, $u$ for $u\circ w^{-1}$, etc.).
We may without loss of generality suppose that $\supp\vphi \subseteq K \Subset w(U)$, where
$K$ is a hypercube. A straightforward calculation shows that $\vphi u$ satisfies
\begin{equation}\label{cut}
\begin{split}
\pa_t(\vphi u) &= \vphi \pa_t u\\ 
&= \nabla^a  ({A'_{\mx}(u)^b}_a  \nabla_b(\vphi u)) - 
\nabla^a({A'_{\mx}(u)^b}_a \nabla_b\vphi\cdot  u - \vphi \nabla_b {A_\mx(\lambda)^b}_a|_{\lambda=u}) \\
&\hspace*{2em} - \nabla^a\vphi \nabla_b({A_\mx(u)^b}_a) - \nabla_a(\vphi \mff^a_\mx(u)) + \nabla_a\vphi \mff^a_\mx(u).
\end{split}
\end{equation}
Due to our assumption \eqref{strict}, this means that $\vphi u$ is the solution of a parabolic equation with
zero boundary conditions on the lateral faces of the hypercube $K$ and initial data \eqref{ic} 
multiplied by $\varphi$. Indeed, if we rewrite each of the summands in \eqref{cut} in terms of local coordinates, 
we have (using \eqref{divom})
\begin{equation}
\begin{split}
\label{(1)}
\nabla^a  ({A'_{\mx}(u)^b}_a  \nabla_b(\vphi u)) &= \Div({A'_{\mx}(u)^r}_j {g^s}_r \pa_s(\vphi u)dx^j)\\ 
&= g^{ij} \pa_i({A'_{\mx}(u)^r}_j {g^s}_r \pa_s(\vphi u)) - \Gamma^k_{il} g^{il} {A'_{\mx}(u)^r}_k {g^s}_r \pa_s(\vphi u)\\
&= \pa_i( g^{ij}{A'_{\mx}(u)^r}_j {g^s}_r \pa_s(\vphi u)) - \pa_i g^{ij} {A'_{\mx}(u)^r}_j {g^s}_r \pa_s(\vphi u)\\
& \hspace*{5em} - \Gamma^k_{il} g^{il} {A'_{\mx}(u)^r}_k {g^s}_r \pa_s(\vphi u) \\
&=:\pa_i(a^{ik} \pa_k(\varphi u)) + b^s\pa_s(\vphi u),
\end{split}
\end{equation}
with 
\begin{align*}
a^{ik} &:=g^{ij}{A'_{\mx}(u)^r}_j {g^k}_r\\
b^s &:=- (\pa_i g^{ij}+\Gamma^j_{il} g^{il}) {A'_{\mx}(u)^r}_j {g^s}_r.
\end{align*}
For the next term on the right hand side of \eqref{cut}, we have (using \eqref{divom})
\begin{align*}
&\nabla^a\big{(}{A'_{\mx}(u)^b}_a \nabla_b\vphi\cdot u - \vphi \nabla_b {A_\mx(\lambda)^b}_a|_{\lambda=u}\big{)}\\
&=
\Div\big( (u{A'_{\mx}(u)^r}_k{g^s}_r\pa_s\vphi 
-(\vphi \Div A)_k|_{\lambda=u})dx^k \big) \\
&= g^{ij}\pa_i \left( u{A'_{\mx}(u)^r}_j{g^s}_r\pa_s\vphi 
-(\vphi \Div A)_j|_{\lambda=u}
\right)
\\&\hspace*{12em}
-\Gamma_{il}^k g^{il}\left(u{A'_{\mx}(u)^r}_k{g^s}_r\pa_s\vphi 
-(\vphi \Div A)_k|_{\lambda=u}\right)
%\\&=: g^{ij}\pa_i C_j - D = \pa_i(g^{ij}C_j) - C_j \pa_ig^{ij} - D.
\end{align*}
%From here, if we denote 
%$T_1(u):= \begin{cases} 1, & u>1\\
%0, & u\leq 1
%\end{cases} $ and 
Then setting
\begin{align*} 
%a^i&:= -g^{ij} T_1(u) \frac{(\Div A)_j|_{\lambda=u}}{u}\\
%b&:= -(\Gamma^k_{il} g^{il} +\pa_i g^{ik}) T_1(u)\frac{(\Div A)_k|_{\lambda=u}}{u}\\
c&:= -(\Gamma^k_{il} g^{il} +\pa_i g^{ik}) \left(u {A'_{\mx}(u)^r}_k{g^s}_r\pa_s\vphi  - (\Div A)_k|_{\lambda=u}\right)\\
c^i&:= g^{ij}\big(u {A'_{\mx}(u)^r}_j{g^s}_r\pa_s\vphi 
-\varphi  (\Div A)_j\big|_{\lambda=u}\big)%\\
%c&:= -(\Gamma^k_{il} g^{il} +\pa_i g^{ik}) \big( {A'_{\mx}(u)^r}_k{g^s}_r\pa_s\vphi \cdot u + 
%[(1-T_1(u)) \varphi (\Div A)_k]\big|_{\lambda=u} \big)
%c&:= -(\Gamma^k_{il} g^{il} +\pa_i g^{ik}) {A'_{\mx}(u)^r}_k{g^s}_r\pa_s\vphi \cdot u 
\end{align*} we conclude
\begin{equation}
\label{(2)}
\begin{split}
\nabla^a\big{(}{A'_{\mx}(u)^b}_a \nabla_b\vphi\cdot u - 
\vphi \nabla_b {A_\mx(\lambda)^b}_a|_{\lambda=u}\big{)}
= %-\pa_i (a^i \varphi u) + 
\pa_i c^i + c.
\end{split}
\end{equation}
Similarly, using \eqref{div11}:
\begin{align*}
&\nabla^a\vphi \nabla_b({A_\mx(u)^b}_a)+ \nabla_a(\vphi f^a_\mx(u))\\
&=g^{ks}\pa_s\vphi\big( \pa_r ({A_{\mx}(u)^r}_k) + \Gamma^r_{rl}{A_{\mx}(u)^l}_k 
-\Gamma^l_{rk}{A_{\mx}(u)^r}_l\big) 
+\pa_k(\vphi \mff_{\mx}(u)^k) + \vphi \Gamma^j_{jk} \mff_{\mx}(u)^k\\
&= \pa_r(g^{ks}\pa_s\vphi {A_{\mx}(u)^r}_k ) - {A_{\mx}(u)^r}_k \pa_r(g^{ks}\pa_s\vphi)
+ g^{ks}\pa_s\vphi\big(\Gamma^r_{rl}{A_{\mx}(u)^l}_k 
-\Gamma^l_{rk}{A_{\mx}(u)^r}_l\big)\\
&\hspace*{1em} +\pa_k(\vphi \mff_{\mx}(u)^k) + \vphi \Gamma^j_{jk} \mff_{\mx}(u)^k
\end{align*}
and from here, defining
\begin{align*}
&d^i := g^{ks}\pa_s\vphi {A_{\mx}(u)^i}_k + \vphi \mff_{\mx}(u)^i \\
&d := - {A_{\mx}(u)^r}_k \pa_r(g^{ks}\pa_s\vphi)
+ g^{ks}\pa_s\vphi\big(\Gamma^r_{rl}{A_{\mx}(u)^l}_k 
-\Gamma^l_{rk}{A_{\mx}(u)^r}_l\big) + \vphi \Gamma^j_{jk} \mff_{\mx}(u)^k,
\end{align*}we conclude
\begin{equation}
\label{(3)}
\nabla^a\vphi \nabla_b({A_\mx(u)^b}_a) + \nabla_a(\vphi \mff^a_\mx(u))=\pa_i d^i+d.
\end{equation} 
Finally, if we set 
$$
f=-c-d+\langle \grad\vphi,\mff_{\mx}(u) \rangle_g
%\nabla_a\varphi \, \mff_{\mx}^a(u), 
\ \ {\rm and} \ \ f^i=-c^i-d^i,
$$ 
we see that in terms of local coordinates, \eqref{cut} satisfies (see \eqref{(1)}, \eqref{(2)}, \eqref{(3)})
\begin{equation*}
\pa_t(\varphi u)=\pa_i(a^{ik}\pa_k(\varphi u)) + b^i\pa_i(\vphi u) + \pa_i f^i + f.
\end{equation*}  
Due to \eqref{eq:stronger_boundedness} we have
$$
f,\ f^i,\ a^{ik}, \ b^i \in L^\infty([0,T)\times K).
$$ 
Using the analog of \eqref{H1-1} for $u$, it follows that $(x,t)\mapsto (\vphi u)(t,x)\in V^{1,0}_2([0,T]\times U)$
(in the notation of \cite{LSU}). We may therefore apply \cite[Ch.\ III, Th.\ 7.1 and Cor.\ 7.1]{LSU},
which, due to Theorem \ref{THM1}, \eqref{strict} and the fact that we have zero boundary conditions,
leads to
\begin{equation}
\label{interp1}
\| \varphi u \|_{L^\infty([0,T]\times U)} \leq C <\infty
\end{equation} From here and \cite[Ch.\ III, Th. 10.1]{LSU}, we obtain that for some $\alpha>0$
\begin{equation}
\label{regularity}
\varphi u \in C^{\alpha}((0,T)\times U'), \ \ U'\Subset U.
\end{equation} Thus, by taking $U'$ on which $\varphi \equiv 1$, we see that $u$ satisfies the following boundary value problem for a quasi-linear equation on $[0,T]\times U'$ (we denote $S_T=(0,T)\times \pa U'$):
\begin{align*}
\pa_t  u &=\pa_i(a^{ik}\pa_k u) + b^i\pa_iu + \pa_i f^i + f \\
u|_{S_T}&= \gamma_{S_T}(u) \in C^{\alpha}(S_T), \\
 u|_{t=0}&=u_0
\end{align*} where $\gamma_{S_T}$ is the trace operator. 

According to \cite[Ch.\ V, Th.\ 6.4]{LSU}, our regularity assumptions on the initial data, and again invoking \eqref{eq:stronger_boundedness}, 
we conclude that $u\in H^{1,2}((0,T)\times K')$ for any $K'\subset\subset K$. Since we have chosen an arbitrary chart and the manifold $M$ is compact, we conclude that $u\in H^{1,2}((0,T)\times M)$.
\end{proof} Uniqueness of such weak solutions is not automatic, unless we assume additional properties of the coefficients as we shall see in the next section.

\begin{remark}
\label{LB}
We remark that the same method can be used if we add the Laplace-Beltrami operator $\Delta u$ or even the 
bi-harmonic operator $\Delta^2 u$ on the right hand side of \eqref{main-eq}, in which case we merely need to require 
semi-definiteness of the tensor $A'_{\mx}$. In other words, we may consider the equation 
\begin{equation}
\label{non-strict}
\pa_t u +\Div \mff_{\mx}(u)=\Div(\Div ( A_{\mx}(u) ) )+\epsilon \Delta u, \ \ \mx \in M
\end{equation} 
for a positive parameter $\eps>0$ and a $(1,1)$-tensor $\mx\mapsto A_\mx(\lambda)\in \mathcal{T}^1_1(M)$ that satisfies:
\begin{equation}
\label{non-strict-cond}
\langle  A'_\mx(\lambda) \mxi, \mxi \rangle \geq 0,
\end{equation} 
and supplement it with the initial conditions \eqref{ic}. We can then apply the same method as for 
problem \eqref{main-eq}, \eqref{ic}. Indeed, in the case of equation \eqref{non-strict}, the system of 
ODEs obtained after inserting the approximation \eqref{app-1} has the form
%\miketodo{Inserted $\eps$ in the last term below}
\begin{equation}
\label{ns-app-2}
\begin{split}
\dot{\alpha}^n_j =\int_M & \langle\mff_{\mx}(u_n),\nabla e_j(\mx)\rangle \,dV(\mx)+
\int_M  \mathrm{tr} (A_{\mx}(u_n) \circ \tilde{H}^{ e_j})(\mx) dV(\mx)\\ 
&+\eps\lambda_j \int_M e_j^2(\mx) d\mx.
\end{split}
\end{equation} Then, Lemma \ref{L1} remains valid and the rest of the proof is the same since \eqref{non-strict} is 
a strictly parabolic PDE. We note that the estimates given in Lemma \ref{L2} and Lemma \ref{Lt-der} are based 
on the strict parabolicity of the equation, and that the existence proof in turn is based on those Lemmas.
{Alternatively, we may also reduce
directly to the situation studied previously by defining a new $(1,1)$-tensor field 
$\tilde A_\mx(\lambda) := A_\mx(\lambda) + \eps\lambda I$, where $I$ is the identity tensor, $I={\delta^a}_b$. Indeed, then $\Div(\Div ( \tilde A_{\mx}(u) ) )
= \Div(\Div ( A_{\mx}(u) ) ) + \eps \Delta u$.} 

Moreover, we note that the method of proof of Theorem \ref{THM1} applies on question of $L^2((0,T);H^1(M))$ convergence of any sequence of approximate solutions to \eqref{main-eq} satisfying bounds from Lemma \ref{L2} and Lemma \ref{Lt-der}.
\end{remark}

\section{The case of unbounded flux and diffusion}\label{sec:unbounded_flux}

In the previous section we proved existence of a global solution to \eqref{main-eq}, \eqref{ic} 
under the assumption that the flux $\mff_{\mx}$ and the diffusion $A_{\mx}$ are bounded 
in the sense of \eqref{bnd-1}. Here, we shall show that the initial value problem \eqref{main-eq}, \eqref{ic} 
also has a global solution also in the absence of \eqref{bnd-1}, 
when imposing additional assumptions that induce a suitable maximum principle.

More precisely, we suppose
\begin{itemize}

\item The initial condition satisfies $0 \leq u_0 \leq 1$;

\item The geometry compatibility condition \cite{BLf,GKM} is satisfied 
\begin{equation}
\label{geomcomp}
\Div \mff_{\mx}(\lambda)=\Div (\Div (A_{\mx}(\lambda)) \ \ \text{for every $\lambda\in \R$}.
\end{equation}

\end{itemize} Condition \eqref{geomcomp} means that the divergence of the (diffusive) flux $\mff_{\mx}(\lambda)-\Div (A_{\mx}(\lambda))$ is zero. If we model the dynamics of a fluid in the Euclidean case, the latter means that the fluid is incompressible. Indeed, if we denote by $\rho$ the density of the fluid, then its change in a control volume is given by:
\begin{equation}
\label{1}
\frac{D\rho }{Dt}={\Div}(a_\mx(\rho)\cdot\nabla \rho), \ \ a_\mx(\lambda) =\pa_\lambda A(\mx,\lambda)  
\end{equation}i.e. the change of the density occurs only due to the diffusion effects. In \eqref{1} we use the standard convention in the frame of which $\frac{D\rho }{Dt}=\frac{\pa \rho}{\pa t}+\frac{d\mx}{dt} \cdot \nabla \rho$ is the material derivative for the flow velocity $\frac{d\mx}{dt}=(\frac{dx_1}{dt},\dots,\frac{dx_d}{dt})$. 
Under the assumption that we are in the Euclidean situation, equation \eqref{main-eq} can be rewritten as
\begin{equation}
\begin{split}
\frac{\pa \rho}{\pa t}+\pa_\lambda \big(\mff_\mx(\lambda)-{\Div}A_\mx(\lambda)\big)\big{|}_{\lambda=\rho} \cdot \nabla \rho & +  {\rm Div} (\mff_\mx(\lambda)-{\Div}A_\mx(\lambda))\big{|}_{\lambda=\rho}\\
&={\Div} (a_\mx(\rho) \cdot \nabla \rho)
\label{2}
\end{split}
\end{equation} Then, since the velocity of the fluid point is
$
\frac{d\mx}{dt}=\pa_\lambda \big(\mff_\mx(\xi)-{\Div}A_\mx(\xi)\big)\big{|}_{\xi=\rho}
$ we get by subtracting \eqref{2} from \eqref{1}:
$$
(\Div \mff_\mx(\lambda)-\Div ( \Div ( A_\mx(\lambda))))\big{|}_{\lambda=\rho}=0,
$$ which is the geometry compatibility condition (since the latter must hold for any possible density $\rho$).

The numerical values in the condition $0 \leq u_0 \leq 1$ are not essential, 
but often are a natural choice as the unknown function may describe, for instance, the concentration 
of fluids in porous media. We refer to \cite{GKM}, where this assumption is imposed and 
where additional context is provided. In particular, we will use the following
result (a modification of \cite[Th.\ 1]{GKM}):
\begin{theorem}\label{th:GKM_theorem1}
	\label{thm1}Assume that the geometry compatibility condition \eqref{geomcomp} holds and that 
	$u: \R^+\times M \to \R$ is a $H^{1,2}([0,T]\times M)$, $T>0$, solution to 
	\eqref{non-strict}. Then for any convex function $S\in C^2(\R)$ such that $S(0)=0$ we have
	\begin{align}\label{a1}
	\begin{split}
	&\pa_t S(u)+ \Div \int_0^{u(t,\mx)} \mff'_{\mx}(\xi)S'(\xi)\, d\xi=\Div \Div \Big(\int_0^{u(t,\mx)} A_{\mx}'(\xi)S'(\xi)\, d\xi\Big)\\
	&+\epsilon \Delta S(u)-\eps S''(u) |\nabla u|^2 -S''(u) \langle A'_{\mx}(u) \nabla u,\nabla u \rangle,
	\end{split}
	\end{align} 
	where $\mff'=\pa_\xi \mff$ and $A'=\pa_\xi A$.
\end{theorem}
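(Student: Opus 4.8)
The plan is to derive \eqref{a1} by multiplying the parabolic equation \eqref{non-strict} by $S'(u)$ and rewriting each term, the only structural inputs being the geometry compatibility condition \eqref{geomcomp} and its $\lambda$-derivative. Since every step is a chain-rule manipulation, I would first carry out the computation pointwise for smooth $u$ and then recover the stated generality: in the present maximum-principle setting an $H^{1,2}$ solution is bounded (recall $0\le u_0\le 1$, cf.\ also Theorem \ref{th:increased_regularity}), so $S'(u)$, $S''(u)$ and the compositions of the $C^1$/$C^2$ coefficients with $u$ are bounded; together with $\nabla u,\nabla^2 u\in L^2$ this makes every product below an $L^1_{\mathrm{loc}}$ function, and one passes to the limit in the distributional formulation after approximating $u$ by smooth functions. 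Thus \eqref{a1} is to be read as an identity of distributions.

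For smooth $u$ the starting point is $\pa_t S(u)=S'(u)\pa_t u$, into which I substitute \eqref{non-strict} to get $\pa_t S(u)=-S'(u)\Div\mff_{\mx}(u)+S'(u)\Div\Div A_{\mx}(u)+\eps S'(u)\Delta u$. The viscous term is immediate, since the chain rule $\Delta S(u)=S'(u)\Delta u+S''(u)|\nabla u|^2$ converts $\eps S'(u)\Delta u$ into $\eps\Delta S(u)-\eps S''(u)|\nabla u|^2$, exactly the corresponding terms on the right of \eqref{a1}. Everything therefore reduces to the purely spatial identity
\[
-S'(u)\Div\mff_{\mx}(u)+S'(u)\Div\Div A_{\mx}(u)
=-\Div\int_0^{u}\mff'_{\mx}(\xi)S'(\xi)\,d\xi+\Div\Div\int_0^{u}A'_{\mx}(\xi)S'(\xi)\,d\xi-S''(u)\langle A'_{\mx}(u)\nabla u,\nabla u\rangle,
\]
which no longer refers to the PDE and holds for any smooth function $u$.

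To verify this I would expand both sides with the chain rule, writing $\Div\mff_{\mx}(u)=(\Div\mff_{\mx})(u)+\mff'_{\mx}(u)\cdot\nabla u$ (the first summand being the divergence taken at frozen $\lambda$ and evaluated at $\lambda=u$), and analogously, but with many more terms, for the second-order object $\Div\Div A_{\mx}(u)=\nabla^b\nabla_a A^a{}_b(\mx,u)$; on the entropy side I differentiate under the integral sign, each upper limit producing exactly one boundary contribution. After this expansion all the terms carrying $\nabla u$ linearly or quadratically — the convective term $\mff'_{\mx}(u)\cdot\nabla u$, the first-order diffusion terms, and the Hessian term ${A'_{\mx}(u)^a}_b\nabla^b\nabla_a u$ — appear identically (each times $S'(u)$) on both sides and cancel. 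It is worth noting that the Hessian term matches \emph{directly}, so no curvature commutator is generated, in contrast to Lemma \ref{Lpar}(ii). The sole term produced by differentiating the factor $S'(u)$ inside $\Div\Div\int_0^u A'_{\mx}(\xi)S'(\xi)\,d\xi$ is $S''(u)\langle A'_{\mx}(u)\nabla u,\nabla u\rangle$, which cancels the last term on the right.

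What then remains is, on the left, $S'(u)\big[(\Div\Div A_{\mx})(u)-(\Div\mff_{\mx})(u)\big]$ (divergences at frozen $\lambda$), which vanishes by \eqref{geomcomp}; and, on the right, the single $\xi$-integral $\int_0^u\big[(\Div\Div A'_{\mx})(\xi)-(\Div\mff'_{\mx})(\xi)\big]S'(\xi)\,d\xi$, whose integrand vanishes because differentiating \eqref{geomcomp} in $\lambda$ gives $\Div\mff'_{\mx}(\lambda)=\Div\Div A'_{\mx}(\lambda)$ (legitimate since $\mff\in C^1$, $A\in C^2$, so $\pa_\lambda$ commutes with the spatial divergences). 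Both leftovers being zero, the spatial identity and hence \eqref{a1} follow. I expect the main obstacle to be precisely this bookkeeping: organising all first- and second-order terms in the two expansions of $\Div\Div A_{\mx}(u)$ and $\Div\Div\int_0^u A'_{\mx}(\xi)S'(\xi)\,d\xi$ so that the cancellations are transparent. The only conceptual inputs are \eqref{geomcomp} and its $\lambda$-derivative; convexity of $S$ and the normalisation $S(0)=0$ are not needed for the equality itself and matter only when \eqref{a1} is later exploited as an entropy inequality.
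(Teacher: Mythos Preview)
Your proposal is correct and follows essentially the same route as the paper: multiply \eqref{non-strict} by $S'(u)$, treat the Laplace term via $S'(u)\Delta u=\Delta S(u)-S''(u)|\nabla u|^2$, and handle the flux/diffusion terms by chain-rule expansions together with the geometry compatibility condition. The paper outsources the latter bookkeeping to \cite[Th.~1]{GKM}, whereas you make explicit that both \eqref{geomcomp} and its $\lambda$-derivative are needed; your observation that convexity and $S(0)=0$ play no role in the \emph{equality} is also correct.
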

\begin{proof}
By multiplying \eqref{non-strict} by $S'(u)$ %where $S\in C^2(\R)$ is a convex function, 
we get after applying the exact same procedure as in the proof of  \cite[Th.\ 1]{GKM}:
\begin{equation}
\label{deg-par}
\begin{split}
    & 0= S'(u)\pa_t u+ S'(u) \Div \mff_{\mx}(u)-S'(u)\Div \Div A_{\mx}(u) -\eps S'(u)\Delta u\\
	&=\pa_t S(u)+ \Div \int_0^{u(t,\mx)} \mff'_{\mx}(\xi)S'(\xi)\, d\xi-\Div \Div \Big(\int_0^{u(t,\mx)} A_{\mx}'(\xi)S'(\xi)\, d\xi\Big)\\
	&+S''(u) \langle A'_{\mx}(u) \nabla u,\nabla u \rangle -\eps S'(u)\Delta u.
\end{split}
\end{equation} As for the Laplace-Beltrami operator, we have
\begin{equation}
\label{lb-1}
S'(u) \Delta u=\Delta S(u)-S''(u) |\nabla u|^2.
\end{equation}
Combining \eqref{deg-par} and \eqref{lb-1}, the claim follows.
\end{proof}

\begin{remark} In \cite{GKM}, this result is proved supposing that $u$ be bounded 
and non-negative, but an inspection of the proof shows that these assumptions are not required. %\miketodo{We should all check this once more.}
\end{remark}

\begin{theorem} Let $\mff \in C^{2}(M\times \R)$ and $A \in C^{2}(M\times \R)$. Then 
under the assumptions $u_0\in C^1(M)$, $0 \leq u_0 \leq 1$ and \eqref{geomcomp}, the Cauchy problem \eqref{main-eq}, 
\eqref{ic} admits a unique bounded weak solution belonging to $H^{(1,2)}((0,T)\times M)$.
\end{theorem}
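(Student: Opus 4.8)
The plan is to reduce to the bounded setting of Section \ref{sec:galerkin} by first replacing $\mff,A$ with bounded coefficients that agree with them on $[0,1]$, then to produce solutions confined to $[0,1]$ by an $\eps$-regularization combined with the entropy inequality of Theorem \ref{th:GKM_theorem1}, and finally to pass to the limit. Writing $G_\mx(\lambda):=\mff_\mx(\lambda)-\Div A_\mx(\lambda)$, condition \eqref{geomcomp} says exactly that $G_\mx(\lambda)$ is divergence free for every $\lambda$. I would fix a smooth nondecreasing truncation $\chi\colon\R\to\R$ with $\chi(\lambda)=\lambda$ on $[0,1]$ and $\chi$ constant for large $|\lambda|$, and set $\tilde A_\mx(\lambda):=A_\mx(\chi(\lambda))$, $\tilde G_\mx(\lambda):=G_\mx(\chi(\lambda))$ and $\tilde\mff:=\tilde G+\Div\tilde A$. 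Then $\tilde A$ is bounded in $C^2(\R\times M)$ with $\tilde A'_\mx(\lambda)=A'_\mx(\chi(\lambda))\,\chi'(\lambda)\geq 0$ and $\tilde A'_\mx=A'_\mx$ on $[0,1]$; $\tilde G$ is bounded and still divergence free; $\tilde\mff$ is bounded with $\Div\tilde\mff=\Div\Div\tilde A$; and $\tilde\mff=\mff$, $\tilde A=A$ on $[0,1]$. In particular $\tilde\mff,\tilde A$ satisfy \eqref{eq:stronger_boundedness} and the semidefiniteness \eqref{non-strict-cond}, with the strict bound \eqref{strict} retained on $[0,1]$.

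For fixed $\eps>0$ I would solve $\pa_t u_\eps+\Div\tilde\mff_\mx(u_\eps)=\Div(\Div(\tilde A_\mx(u_\eps)))+\eps\Delta u_\eps$ with data \eqref{ic}. As $\tilde A'_\mx$ satisfies \eqref{non-strict-cond} and the added $\eps\Delta$ restores global strict parabolicity, Remark \ref{LB} provides a weak solution $u_\eps\in L^2((0,T);H^1(M))$, and the boundedness \eqref{eq:stronger_boundedness} of $\tilde\mff,\tilde A$ feeds the local parabolic regularity argument in the proof of Theorem \ref{th:increased_regularity} (uniformly parabolic now by virtue of $\eps\Delta$) to give $u_\eps\in H^{1,2}((0,T)\times M)$. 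I would then apply Theorem \ref{th:GKM_theorem1} to $u_\eps$ with a convex $S\in C^2(\R)$ with $S(0)=0$ and $S\equiv 0$ on $[0,1]$ (a smoothing of $\lambda\mapsto\mathrm{dist}(\lambda,[0,1])^2$). Integrating the identity of Theorem \ref{th:GKM_theorem1} over the closed manifold $M$ annihilates every divergence, double divergence and Laplacian term and leaves $\frac{d}{dt}\int_M S(u_\eps)\,dV=-\eps\int_M S''(u_\eps)\|\nabla u_\eps\|_g^2\,dV-\int_M S''(u_\eps)\langle \tilde A'_\mx(u_\eps)\nabla u_\eps,\nabla u_\eps\rangle\,dV\leq 0$, since $S''\geq 0$, $\eps>0$ and $\tilde A'_\mx\geq 0$. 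Hence $\int_M S(u_\eps(t))\,dV\leq\int_M S(u_0)\,dV=0$, so $0\leq u_\eps\leq 1$ a.e.; on this range $\tilde\mff,\tilde A$ coincide with $\mff,A$, so $u_\eps$ in fact solves the original equation with the $\eps\Delta$ term.

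Uniform-in-$\eps$ bounds follow next. Because $0\leq u_\eps\leq 1$ the coefficients are bounded along $u_\eps$, so the estimate of Lemma \ref{L2} (where the extra dissipation $-\eps\int_M\|\nabla u_\eps\|_g^2\,dV$ only helps) bounds $u_\eps$ in $L^2((0,T);H^1(M))$ uniformly, and the argument of Lemma \ref{Lt-der} bounds $\pa_t u_\eps$ in $L^2((0,T);H^{-1}(M))$ uniformly, the contribution $\eps\Delta u_\eps=\eps\Div\nabla u_\eps$ even tending to $0$ there. Theorem \ref{th:aubin}(i) with $H^1(M)\subset\subset L^2(M)\subset H^{-1}(M)$ then extracts a subsequence $u_\eps\to u$ strongly in $L^2((0,T)\times M)$, with $0\leq u\leq 1$ and $u\in L^2((0,T);H^1(M))$. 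Passing to the limit in the weak formulation exactly as in the proof of Theorem \ref{THM1} (using strong $L^2$ convergence together with continuity and boundedness of $\mff,A$ on $[0,1]$, and $\eps\Delta u_\eps\to 0$) shows $u$ is a weak solution of \eqref{main-eq}, \eqref{ic}. Finally, since $0\leq u\leq 1$ the coefficients along $u$ are again bounded, so the local reduction and estimates in the proof of Theorem \ref{th:increased_regularity} apply verbatim to $u$ and yield $u\in H^{1,2}((0,T)\times M)$.

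The main obstacle is uniqueness. The naive energy method — subtracting two bounded $H^{1,2}$ solutions and testing $\pa_t w+\Div(\mff_\mx(u_1)-\mff_\mx(u_2))=\Div\Div(A_\mx(u_1)-A_\mx(u_2))$ with $w:=u_1-u_2$ — produces the coercive term $-\int_M\langle A'_\mx(u_1)\nabla w,\nabla w\rangle\,dV\leq -c\|\nabla w\|_{L^2}^2$ but also the cross term $-\int_M\langle(A'_\mx(u_1)-A'_\mx(u_2))\nabla u_2,\nabla w\rangle\,dV$, bounded only by $C\int_M|w|\,\|\nabla u_2\|_g\,\|\nabla w\|_g\,dV$, which cannot be absorbed without an $L^\infty$-bound on $\nabla u_2$ that is unavailable for merely $H^{1,2}$ solutions when $d\geq 2$. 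I would therefore argue at the level of entropies: both solutions are bounded $H^{1,2}$ strong solutions, hence by Theorem \ref{th:GKM_theorem1} with $\eps=0$ they satisfy the Kruzhkov entropy identities for all admissible convex $S$, and a doubling-of-variables argument on $M$ as in \cite{GKM} — where \eqref{geomcomp} is precisely what makes the flux and diffusion entropy fluxes match — yields the contraction $\|u_1(t)-u_2(t)\|_{L^1(M)}\leq\|u_1(0)-u_2(0)\|_{L^1(M)}=0$, so $u_1=u_2$. Setting up this doubling cleanly on the manifold, and checking that the modified coefficients can be made bounded, globally semidefinite, strictly positive on $[0,1]$, and compatible with \eqref{geomcomp} all at once, are the two technical crux points.
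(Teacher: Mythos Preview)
Your proposal is correct and follows essentially the same route as the paper: truncate $\mff,A$ via a smooth cutoff $\chi$ (your detour through $G=\mff-\Div A$ yields exactly $\tilde\mff_\mx(\lambda)=\mff_\mx(\chi(\lambda))$, which already preserves \eqref{geomcomp}), add $\eps\Delta$, invoke Remark~\ref{LB} and Theorem~\ref{th:increased_regularity} for $H^{1,2}$-regularity, use the entropy identity of Theorem~\ref{th:GKM_theorem1} to confine $u_\eps$ to $[0,1]$ (the paper uses the semi-entropies $|\cdot-1|_+$ and $|\cdot|_-$ instead of your single smooth $S$, but the effect is identical), and pass to the limit via Aubin--Lions exactly as in Theorem~\ref{THM1}. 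The uniqueness step you flag as the main obstacle is dispatched in the paper simply by observing that a bounded $H^{1,2}$ solution is automatically an entropy solution in the sense of \cite[Def.~3]{GKM} and then citing the $L^1$-contraction \cite[Cor.~14]{GKM}, which is precisely the doubling-of-variables result on manifolds you were aiming for.
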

\begin{proof} To begin with, we consider an alternative problem to \eqref{main-eq}, where $\mff$ and $A$ 
are replaced by suitably truncated versions $\tilde \mff$ and $\tilde A$, respectively, satisfying the global bounds \eqref{eq:stronger_boundedness}.
Concretely, set 
\begin{equation}\label{tilmf}
\tilde{\mff}_{\mx}(\lambda):=\mff_{\mx}(\chi(\lambda))
\end{equation}
and 
\begin{equation}\label{tilA}
\tilde{A}_{\mx}(\lambda)=:A_{\mx}(\chi(\lambda)),
\end{equation}
where $\chi :\R \to \R$ is a smooth function such that $\chi(\lambda)=\lambda $ for $\lambda \in [0,1]$, $\chi $ is constant on $(-\infty,-1]$ and on $[2,\infty ) $ and such that $\chi'\geq 0$ everywhere. 

We consider the following equation
\begin{equation}
\label{appr-1}
\pa_t \tilde{u}_\eps +\Div \tilde\mff_{\mx}(\tilde{u}_\eps)=\Div(\Div ( \tilde{A}_{\mx}(\tilde{u}_\eps) ) )+\eps \Delta \tilde{u}_\eps.
\end{equation} 
Then \eqref{eq:stronger_boundedness}
is satisfied for $\tilde\mff \in C^2$ and $\tilde{A}\in C^2$, and the geometry compatibility condition
\eqref{geomcomp} holds. By Theorem \ref{th:increased_regularity} and Remark \ref{LB}, the initial value problem corresponding to \eqref{appr-1} 
with $0\le u_0\le 1$ has a solution $\tilde{u}_\eps\in H^{1,2}((0,T)\times M))$.
We may now apply Theorem \ref{th:GKM_theorem1} to $\tilde{u}_\eps$. More precisely, we insert for $S$ as limiting
cases for the regularity (cf.\ \cite[Sec.\ 2]{[CP]}) the semi-entropies
$$
\eta_+(\tilde{u}_\eps)=|\tilde{u}_\eps-1|_+=\begin{cases} 
\tilde{u}_\eps-1, &\tilde{u}_\eps-1>0\\
0, & \text{otherwise}
\end{cases}  \quad  \eta_-(\tilde{u}_\eps)=|\tilde{u}_\eps|_-=\begin{cases} 
|\tilde{u}_\eps|, &\tilde{u}_\eps<0\\
0, & \text{otherwise}
\end{cases}
$$ 
Then, integrating \eqref{a1} over $[0,T)\times M$, we get, taking into account the positive definiteness of 
$(\tilde A^\eps)'_\mx(\lambda) + \eps  I$,
%(i.e., \eqref{strict}), 
\begin{equation}
\label{max-princ}\begin{split}
&\int_M |\tilde{u}_\eps(T,\mx)-1|_+ \,dV(\mx) \leq \int_M |u_0(\mx)-1|_+ \,dV(\mx)=0, \ \ {\rm and} \\ 
&\int_M |\tilde{u}_\eps(T,\mx)-0|_- \,dV(\mx) \leq \int_M |u_0(\mx)-0|_- \,dV(\mx)=0,
\end{split}
\end{equation}
implying that $0\leq \tilde{u}_\eps \leq 1$. Thus $\tilde{u}_\eps$ never leaves the $\lambda$-region in which, according to \eqref{tilmf}, \eqref{tilA}, and the choice of $\chi $, $\tilde{\mff}_{\mx}\equiv \mff_{\mx}$ 
and $\tilde{A}_{\mx}\equiv A_{\mx}$.

Thus,  in the range of $u_\eps$, the equation \eqref{appr-1} is strictly parabolic with the parabolicity constant $c$ from \eqref{strict} independent of $\eps$. Therefore, repeating the proof of Theorem \ref{THM1}, we conclude that the sequence $(\tilde u_\eps)$ converges in $L^2((0,T);H^1(M))$ toward a function $\tilde{u}$ 
solving the problem
\begin{equation}
\label{appr-2}
\begin{split}
&\pa_t \tilde{u} +\Div \mff_{\mx}(\tilde{u})=\Div(\Div ( {A}_{\mx}(\tilde{u}) ) )\\
&u|_{t=0}=u_0(\mx).
\end{split}
\end{equation} Hence $\tilde u$ is a solution to the original initial value problem \eqref{main-eq}, \eqref{ic}. From here and since the conditions of Theorem \ref{th:increased_regularity} are satisfied on the range of $\tilde{u}$, we conclude that the solution constructed above is in $H^{1,2}((0,T)\times M)$ and bounded. Consequently, it is an entropy admissible solution in the sense of \cite[Def.\ 3]{GKM} (due to \cite[Th.\ 1 and Th.\ 2]{GKM}).
It is therefore unique by \cite[Cor.\ 14]{GKM}.  
\end{proof}

\begin{remark}
The assumption $\mff \in  C^{2}(M\times \R)$ can be relaxed by involving mollification with respect to $\lambda \in \R$ into the construction. The mollification does not affect the geometry compatibility conditions and thus it does not affect the proof of the previous theorem. However, in this case we would have several additional technical steps which blur the ideas of the proof. Let us just briefly explain the main steps.

First, we mollify both $\mff_{\mx}(\lambda)$ and $A_{\mx}(\lambda)$ with respect to $\lambda \in \R$ via the standard convolution kernel, say $\rho_\eps$. This does not affect the geometry compatibility condition. For this new flux $\mff_{\mx}^\eps(\lambda)$ and diffusion $A_{\mx}^\eps(\lambda)$ we solve \eqref{main-eq}, \eqref{ic}, 
obtaining a family of bounded functions $(u_\eps)$ satisfying
\begin{align*}
%\label{main-eq}
\pa_t u_\eps +\Div \mff^\eps_{\mx}(u_\eps)&=\Div(\Div ( A^\eps_{\mx}(u_\eps) ) ), \ \ \mx \in M,\\
u_\eps|_{t=0} &=u_0(\mx) \in L^\infty(M).
\end{align*} 
This family $(u_\eps)$ satisfies the estimates from Lemma \ref{L2} and Lemma \ref{Lt-der} and therefore is strongly precompact in $L^2([0,T]\times M)$. A limiting function $u$ (along a subsequence of $(u_\eps)$) then is the solution of \eqref{main-eq}, \eqref{ic}.
\end{remark}

%\textcolor{red}{
%We need that $\tilde A_x'$ is positive definite. Let us first fix x. Then we can choose $k_2$ still satisfying all the previous conditions, but additionally such that $h2'(\lambda)$ times the smallest eigenvalue $\mu_min of A_x'(1)$ minus $k_2'(\lambda)$ times the operator norm of $A_x''(1)$ is greater than 0 for all $\lambda$ (this works because $\mu_{min}\cdot h_2'(1) = \mu_{min} > 0$ and $k_2'(1) = 0$). Then
%$$
%\xi^T [h_2'(\lambda)A'_x(1) + k_2'(\lambda) A_x''(1)] \xi  \ge [\mu_{min} h_2' - k_2' ||A_x''(\lambda)||_op] ||\xi||^2 > 0
%$$
%For x varying, we can do the same since $\mu_min$ depends continuously on x and we can find $h_2$ and $k_2$ that additionally depend smoothly on x and still have all the above properties.}

\begin{remark}\label{rem:general_Q}
We note that we can also consider a general parabolic equation of the form (see \cite{MM})
\begin{equation}
\label{general}
\pa_t u =Q[u] \ \ {\rm on} \ \ (0,T)\times M,
\end{equation} again with the initial data \eqref{ic} on a $d$-dimensional compact orientable Riemannian manifold. 
The operator $Q$ has the following local expression:
\begin{align*}
Q[u](t,\mx) = &\pa_{j_1\dots j_{p}} \left( A^{i_1\dots i_{p} j_1\dots j_p}(t,\mx,\nabla u, \dots, \nabla^{p-1} u)\pa_{i_1\dots i_{p}}u(t,\mx)\right)\\
&+b(t,\mx,u,\nabla u, \dots, \nabla^{p-1} u),
\end{align*} 
where the tensor $A=(A^{i_1\dots i_{p} j_1\dots j_p})$ is a locally elliptic smooth $(2p,0)$-tensor satisfying uniformly with respect to all the variables
$$
-(-1)^{p}\sum\limits_{i_1,\dots,i_p,j_1,\dots,j_p} A^{i_1 \dots i_{p} j_1\dots  j_p} \xi_{i_1\dots i_p} \xi_{j_1\dots j_p} \geq c \sum\limits_{i_1,\dots,i_p} |\xi_{i_1\dots i_p}|^2
$$ for some $c>0$ and every $\mxi=(\xi_{i_1\dots i_p})_{i_j\in \{1,\dots,d\}}$.

The Galerkin approximation from Section \ref{sec:galerkin} will induce a system of equations (corresponding to \eqref{app-2}):
\begin{equation}
\label{gen-1}
\dot{\alpha}^n_j=\int_M Q[u_n](t,\mx) e_j(\mx) dV(\mx)=F(t,\malpha_n), \ \ j=1,\dots,n,
\end{equation} 
where $\malpha_n=(\alpha_1^n,\dots,\alpha_n^n)$. Moreover, under appropriate growth assumptions on $A$ and $b$, 
repeating the methods from Lemma \ref{L2} and Lemma \ref{Lt-der}, parabolicity of the operator $Q$ provides 
uniform apriori boundedness of $(u_n)$ in  $L^2((0,T); H^p(M)) \cap H^{1}((0,T);H^{-p}(M))$ which in turn provides strong precompactness of the sequence of approximate solutions (again by means of the Aubin-Lions-Simon lemma).

%We finally remark that if we do not have growth assumptions on $A$ and $b$ then only have 
%local existence of solutions to \eqref{general}, \eqref{ic}. Indeed, if we assume that $A$ and $b$ are 
%regular enough, then the solutions will be in $C^\alpha$ for some $\alpha>0$ (see \eqref{regularity}). \miketodo{This is too vague for my taste. REPLY: OK?} 
%If this is the case, we can replace $A$ and $b$ by the corresponding 
%bounded variants as in \eqref{tilmf} and \eqref{tilA}. Then, the solution to \eqref{general}, \eqref{ic} exists as long the 
%solution and its derivatives remain below the values of $\nabla^k u$,  $k=0,\dots,p-1$, out of which we changed 
%the coefficients $A$ and $b$. \miketodo{Needs to be made more precise/readable. REPLY: We first truncate the coefficients and then solve the equation. The truncated and original equations will have the same solution as long as the solution and its derivatives remain in the interval in which the truncated and the original coefficients coincide. This will be the situation in some (at least short) time interval. }
\end{remark}

\section{Stochastic parabolic differential equation}\label{sec:stochastic}

Let us first introduce the notions that we need. By $(\Omega, {\cal F}, ({\cal F}_t), {\bf P})$ we denote the stochastic basis with the sample space $\Omega$, the $\sigma$-algebra ${\cal F}$, the natural filtration ${\cal F}_t$ generated by the Wiener process $W_t$, and the probability measure ${\bf P}$. 

The Wiener process $W_t$ is a stochastic process which has independent Gaussian increments in the sense that $W_{t+u}-W_t$ is independent of the past values $W_s$, $s<t$, and $W_{t+u}-W_t \sim {\cal N}(0,u)$. Finally, $W_t$ has continuous paths i.e.\ with probability $1$, $W_t$ is continuous in $t$.

The stochastic forcing $dW_t$ is the It\^o integral defined for appropriate functions $g$ \cite[Definition 3.4.1]{Oks} as the following limit in probability:
$$
\int_0^T g(t,\omega) dW_t=\lim\limits_{n\to \infty} \sum\limits_{k=0}^{n}g(t_k,\omega) (W(t_{k+1},\omega)-W(t_{k},\omega)).
$$ We see that the latter integral is actually a random variable with respect to the filtration ${\cal F}_t$. 

We consider the following spaces (not distinguishing between functions {and the corresponding equivalence classes)}:
\begin{align*}
&L^2_{{\bf P}}(\Omega; L^2((0,T); H^1(M)))\\&=\{u: (0,T)\times M \times \Omega \to \R: \, \int_\Omega \int_0^T \|u(t,\cdot,\omega)\|^2_{H^1(M)} dt d{\bf P}(\omega)<\infty  \}\\
&L^2_{{\bf P}}(\Omega; C^{1/2}((0,T); H^{-1}(M))):=\\
&\{u: (0,T)\times M \times \Omega \to \R: \, \int_\Omega \sup\limits_{\Delta t>0} \frac{\|u(t+\Delta t,\cdot,\omega)-u(t,\cdot,\omega)\|^2_{H^{-1}(M)}}{\Delta t} d{\bf P}(\omega)<\infty  \}\\
&L^2_{{\bf P}}(\Omega; L^2(M)))=\{u: M \times \Omega \to \R: \, \int_\Omega \int_M |u(\mx,\omega)|^2 dV(\mx) d{\bf P}(\omega)<\infty  \}.
\end{align*}

%Now, we can introduce the definition of the solution to \eqref{main-eq-stoch}, \eqref{ic}.
%
%
%\begin{definition}
%\label{weak-sol-pb}
%We say that the $H^1(M)$-valued stochastic process $u$ i.e. the measurable function $u: [0,T] \times  M \times \Omega \to \R$ such that $ u(\cdot,\omega) \in C([0,T];H^1(M))$ for ${\bf P}$-a.e. $\omega\in \Omega$ is the weak-strong solution to \eqref{main-eq-stoch}, \eqref{ic} for $u_0\in L^2_{{\bf P}}(\Omega; L^2(M)))$ if for a test function $\varphi \in C^2_0([0,T]\times M)$ it holds almost surely
%\begin{align*}
%&\int_0^T \int_M \left( u \pa_t \varphi  +\Div (\mff (\mx,u))\nabla \varphi \right) dV(\mx) dt+\int_M u_0(\mx) \varphi(0,\mx) dV(\mx)\\
%&= \int_0^T \int_M \varphi \Phi (\mx,u) dW_t- \int_0^T \int_M tr(A_{\mx}(u)  \circ \tilde{H}^{\varphi})   dV(\mx) dt.
%\end{align*}
%\end{definition}

 We assume the following for the coefficients of \eqref{main-eq-stoch}:

\begin{itemize}

\item[(i)] $\mff \in C^1(M\times \R)$ is such that $\sup\limits_{\lambda\in \R}\|\mff(\cdot,\lambda)\|_{L^2(M)}<C$ and $\| \mff\|_{C^1(M\times \R)}<C$, and, for simplicity, $\mff(\mx,0) \equiv 0$;

\item[(ii)] For every $\mx \in M$, the map $\lambda \mapsto A_{\mx}(\lambda)$ is linear;

\item[(iii)] $\Phi\in C^1_0(M\times \R)$. 
%is such that $|\Phi(\mx,\lambda)| \leq C(1+|\lambda|)$.% and $\Phi(\mx,0)\equiv 0$.

\end{itemize}

We now supplement equation \eqref{main-eq-stoch} with the initial condition
\begin{equation}
\label{ic-stoch} 
u|_{t=0}=u_0(\mx,\omega)=\sum\limits_{k=1}^\infty \alpha_k(\omega) e_k(\mx), \ \ (\mx,\omega)\in M \times \Omega,
\end{equation} where $u_0\in L^\infty_{{\bf P}}(\Omega; L^2(M)))$ in the sequel if not stated otherwise. Note that this implies 
\begin{equation}
\label{finite}
 \sum\limits_{k=1}^\infty  |\alpha_k(\omega)|^2  \leq C <\infty \ \ \text{almost surely}.
\end{equation}

We shall prove the following theorem.
\begin{theorem}
\label{thm-main-st}
If the mapping $\lambda \mapsto A_{\mx}(\lambda)$ is linear for every $\mx \in M$, then the Galerkin approximation:
\begin{equation}
\label{un}
u_n(t,\mx,\omega)=\sum\limits_{k=1}^n \alpha^n_k (t,\omega) e_k(\mx), \ \ t\in [0,T), \ \ \mx\in M, \ \ \omega\in \Omega,
\end{equation} converges in $L^2_{{\bf P}}(\Omega; L^2((0,T); H^1(M)))$ towards $u\in L^2_{{\bf P}}(\Omega; L^2((0,T); H^2(M))) \cap  L^2_{{\bf P}}(\Omega; C^{1/2}((0,T); L^2(M)))$ representing a solution to \eqref{main-eq-stoch}, \eqref{ic-stoch} in the sense that for every $\varphi \in H^2(M)$ and every $\Delta t>0$ it holds  %\melanietodo{fixed a sign}\melanietodo{Does this mean that this should hold for all $\Delta t$?} 
\begin{equation}\label{eq:stoch_equation}
\begin{split}
&\int_M \left( u(t+\Delta t,\mx,{\omega})-u(t,\mx,{\omega}) \right) \varphi(\mx)\,dV(\mx)= 
\int_t^{t+\Delta t} \int_M \mff(\mx,u)\nabla \varphi(\mx) \, dV(\mx) dt\\
&-\int_t^{t+\Delta t} \int_M {\Div(A_{\mx}\cdot u)(\nabla\vphi)} dV(\mx) dt+\int_t^{t+\Delta t} \int_M \Phi (\mx,u) \varphi(\mx) \,dV(\mx) dW_t.
\end{split}
\end{equation}%\melanietodo{In the last integral, switch $\int_M$ and $\int_t^{t+\Delta t}$}
\end{theorem}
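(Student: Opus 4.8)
The plan is to mirror the deterministic Galerkin argument of Theorem \ref{THM1}, replacing the energy identities by their It\^o analogues and compensating for the loss of temporal regularity (the stochastic integral being only H\"older-$\tfrac12$ in time, so that Theorem \ref{th:aubin} cannot be applied pathwise) by a stochastic compactness argument. First I would insert \eqref{un} into \eqref{main-eq-stoch} and project onto $\mathrm{span}(e_1,\dots,e_n)$, using Lemma \ref{lem:trace_identity} for the $\Div\Div$ term and orthonormality of $(e_k)$, to obtain the It\^o system
\[
d\alpha^n_j = F_j(\malpha_n)\,dt + G_j(\malpha_n)\,dW_t, \qquad \alpha^n_j(0)=\alpha_j(\omega),
\]
with $F_j = \int_M\langle\mff_\mx(u_n),\nabla e_j\rangle\,dV + \int_M\mathrm{tr}(A_\mx(u_n)\circ\tilde H^{e_j})\,dV$ and $G_j = \int_M\Phi(\mx,u_n)\,e_j\,dV$. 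Assumptions (i)--(iii) make $F_j,G_j$ locally Lipschitz with linear growth, so \cite{Oks} yields unique global $\mathcal F_t$-adapted solutions with continuous paths, the initial data being controlled by \eqref{finite}.

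Next I would derive the a priori estimates. Applying It\^o's formula to $\|u_n(t)\|_{L^2(M)}^2=\sum_{k\le n}|\alpha^n_k(t)|^2$, the drift reproduces the left-hand side of Lemma \ref{Lpar}(i'), producing the coercive term $-c\|\nabla u_n\|_g^2$, while the It\^o correction equals $\|P_n\Phi(\mx,u_n)\|_{L^2(M)}^2\le\|\Phi\|_{L^\infty}^2\,\mathrm{vol}(M)$, where $P_n$ is the projection (by \eqref{eq:projection_independent_of_s} the same in every $H^s$). Taking expectations (the martingale term vanishing) and using Gronwall gives a bound in $L^2_{\mathbf{P}}(\Omega;L^2((0,T);H^1(M)))$. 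I would then apply It\^o to $\|u_n(t)\|_{H^1(M)}^2$; since $\sum_k\lambda_k^2\alpha^n_k e_k=(I-\Delta)u_n$, the drift is tested against $(I-\Delta)u_n$, and Lemma \ref{Lpar}(ii) applied to the $\int_M\Div\Div(A_\mx u_n)(-\Delta u_n)$ contribution yields the coercive term $-k\|\Delta u_n\|_{L^2}^2$, the flux part being absorbed after one integration by parts and Young's inequality and the correction $\|P_n\Phi(\mx,u_n)\|_{H^1}^2\le C(1+\|\nabla u_n\|_{L^2}^2)$ being controlled by the previous step. Gronwall then gives a uniform bound in $L^2_{\mathbf{P}}(\Omega;L^2((0,T);H^2(M)))$, and a Burkholder--Davis--Gundy estimate on the same functionals would upgrade these to the higher moments $\sup_n\mathbb{E}\|u_n\|_{L^2((0,T);H^1)}^{2+\delta}<\infty$ needed later for uniform integrability.

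Because $\int_0^t P_n\Phi\,dW_\tau$ is H\"older-$<\tfrac12$ in time while the drift is (by the $H^2$ bound) even $W^{1,2}$ in time, $(u_n)$ is bounded in $L^2_{\mathbf{P}}(\Omega;W^{\alpha,2}((0,T);H^{-1}(M)))$ for $\alpha<\tfrac12$. Together with the $L^2_t H^1$ bound and $H^1(M)\subset\subset L^2(M)$, an Aubin--Lions--Simon--type criterion (now for the laws) shows that the laws of $(u_n)$ are tight on $L^2((0,T);L^2(M))$. Via Prokhorov and Skorokhod I would pass to a new probability space carrying almost surely convergent copies $\tilde u_n\to\tilde u$ in $L^2((0,T);L^2(M))$ together with $\tilde W_n\to\tilde W$, and then identify $\tilde u$ as a (martingale) solution satisfying \eqref{eq:stoch_equation} by letting $n\to\infty$ in the projected weak form: continuity of $\mff,\Phi$ with the moment bounds gives $\mff(\tilde u_n),\Phi(\tilde u_n)$ convergence by dominated convergence, the $A$-term is linear, and the stochastic integral passes to the limit by the standard convergence lemma for It\^o integrals under Skorokhod. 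I expect this identification step to be the main obstacle, the delicate points being that $\tilde W$ remains a Wiener process for the limiting filtration and that the stochastic integral indeed converges.

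Finally, pathwise uniqueness would follow from one more It\^o estimate on $\|u-v\|_{L^2}^2$ for two solutions: Lemma \ref{Lpar}(i') supplies coercivity from the linear diffusion, the Lipschitz flux and $\Phi$ terms are absorbed, and Gronwall forces $u=v$. Uniqueness together with the Gy\"ongy--Krylov characterization then promotes tightness to convergence in probability of the original sequence $(u_n)$ in $L^2((0,T);L^2(M))$ on the original basis, and the moment bounds yield uniform integrability, whence $u_n\to u$ in $L^2_{\mathbf{P}}(\Omega;L^2((0,T);L^2(M)))$. The interpolation inequality $\|w\|_{H^1}^2\le\|w\|_{L^2}\|w\|_{H^2}$ combined with Cauchy--Schwarz and the uniform $L^2_{\mathbf{P}}(\Omega;L^2_t H^2)$ bound upgrades this to the asserted convergence in $L^2_{\mathbf{P}}(\Omega;L^2((0,T);H^1(M)))$; membership of $u$ in $L^2_{\mathbf{P}}(\Omega;L^2((0,T);H^2(M)))$ follows by weak lower semicontinuity, and the $C^{1/2}$-in-time regularity from the time-increment estimate.
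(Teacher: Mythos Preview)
Your outline is correct, but it takes a substantially more involved route than the paper. The paper's key observation is that linearity of $\lambda\mapsto A_\mx(\lambda)$ permits a \emph{direct Cauchy argument} on the original stochastic basis, bypassing stochastic compactness entirely. Concretely, for $m\ge n$ the paper subtracts the two Galerkin systems, applies It\^o's formula to $|\alpha_j^n-\alpha_j^m|^2$, and sums over $j\le n$; because $\Div\Div(A_\mx\cdot(u_n-u_m))$ is linear in $u_n-u_m$, Lemma~\ref{Lpar}\,(i') supplies coercivity for the difference itself, and the only residual terms involve the orthogonal tail $(u_m)^\perp_n$. These tails are shown to vanish as $n\to\infty$ via the eigenvalue growth $\lambda_n\nearrow\infty$ together with the uniform $H^2$ bound \eqref{H2} (this is precisely where that bound is spent). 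Gronwall then gives $E\|u_n-u_m\|_{L^2_tH^1}^2\to 0$ directly, with no Skorokhod representation, no Gy\"ongy--Krylov lemma, no BDG estimate, and no higher moments. Your tightness\,/\,martingale-solution\,/\,pathwise-uniqueness\,/\,Gy\"ongy--Krylov scheme is the standard machinery for genuinely nonlinear SPDEs and would work here as well, but it purchases generality you do not need: you invoke linearity only at the uniqueness step, whereas the paper exploits it already at the convergence step, which is both shorter and delivers the $H^1$-convergence immediately rather than via interpolation from $L^2$. The paper even flags this contrast explicitly, noting that ``unlike the situation from the previous section'' no compactness is required.
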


In order to prove {Theorem \ref{thm-main-st}}, we shall need {two fundamental results from the It\^o calculus},
the first of which is (see \cite[Th.\ 4.1.2]{Oks},\cite[Th.~5.9]{Deck}):
\begin{lemma}
\label{IL1} (It\^o lemma)
Let $X_t$ be {an It\^o process given by}
\begin{equation}
\label{StocProc}
dX_t = \mu_1 dt + \sigma_1 dW_t.  
\end{equation}  For each {$f\in C^{1,2}([a,b]\times \R)$, $f=f(t,z)$, also $f(t,X_t)$ is an It\^o process, and}
\begin{align}
\label{ItoFormula}
df({t},X_t)&=\left( \frac{\pa f}{\pa t} + \mu_1 \frac{\pa f}{\pa z} + \frac{\sigma^2_1}{2} \frac{\pa^2 f}{\pa z^2} \right) dt + \sigma_1 \frac{\pa f}{\pa z} dW_t
\end{align}
holds.
\end{lemma}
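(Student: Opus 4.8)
The plan is to establish the integrated form of \eqref{ItoFormula}, namely that for $a\le t\le b$,
\[
f(t,X_t)-f(a,X_a)=\int_a^t\Big(\pa_s f+\mu_1\,\pa_z f+\tfrac{\sigma_1^2}{2}\,\pa_{zz}f\Big)\,ds+\int_a^t\sigma_1\,\pa_z f\,dW_s,
\]
with all derivatives evaluated along $(s,X_s)$; the differential notation in \eqref{ItoFormula} is merely shorthand for this identity. First I would reduce, by a standard localization via stopping times together with the density of elementary (step) processes among admissible integrands, to the case where $\mu_1$ and $\sigma_1$ are bounded elementary adapted processes and where $f$ together with $\pa_t f$, $\pa_z f$, $\pa_{zz}f$ is bounded. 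The general statement then follows by passing to the limit in these approximations, using continuity of the It\^o integral in $L^2(\Omega)$.

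On this reduced setting I would fix a partition $a=t_0<t_1<\dots<t_N=t$ of mesh $|\Pi|\to 0$ and telescope
\[
f(t,X_t)-f(a,X_a)=\sum_{k=0}^{N-1}\big(f(t_{k+1},X_{t_{k+1}})-f(t_k,X_{t_k})\big).
\]
To each summand I would apply Taylor's theorem, to first order in the $t$-slot (where $f$ is only $C^1$) and to second order in the $z$-slot (where $f$ is $C^2$), writing $\Delta t_k=t_{k+1}-t_k$ and $\Delta X_k=X_{t_{k+1}}-X_{t_k}$:
\[
f(t_{k+1},X_{t_{k+1}})-f(t_k,X_{t_k})=\pa_t f\,\Delta t_k+\pa_z f\,\Delta X_k+\tfrac{1}{2}\pa_{zz}f\,(\Delta X_k)^2+R_k,
\]
with derivatives taken at $(t_k,X_{t_k})$ and $R_k$ collecting the $o(\Delta t_k)$ and higher-order $z$-remainders. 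Substituting $\Delta X_k=\mu_1\Delta t_k+\sigma_1\Delta W_k$, the linear contributions $\sum\pa_t f\,\Delta t_k$, $\sum\pa_z f\,\mu_1\Delta t_k$ and $\sum\pa_z f\,\sigma_1\Delta W_k$ converge (as Riemann sums, resp.\ in $L^2(\Omega)$) to the three integrals $\int\pa_s f\,ds$, $\int\pa_z f\,\mu_1\,ds$ and $\int\sigma_1\pa_z f\,dW_s$, the last by the very definition of the It\^o integral.

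The decisive term is the quadratic one. Expanding $(\Delta X_k)^2=\mu_1^2(\Delta t_k)^2+2\mu_1\sigma_1\Delta t_k\Delta W_k+\sigma_1^2(\Delta W_k)^2$, the first two sums vanish in the limit (their $L^2$-norms are $O(|\Pi|)$), while the essential step is
\[
\tfrac{1}{2}\sum_k\pa_{zz}f(t_k,X_{t_k})\,\sigma_1^2\,(\Delta W_k)^2\;\longrightarrow\;\tfrac{1}{2}\int_a^t\pa_{zz}f\,\sigma_1^2\,ds\quad\text{in }L^2(\Omega).
\]
\textbf{This is the main obstacle}, and it is exactly where the second derivative enters with weight $\tfrac12$: it rests on the quadratic variation of Brownian motion, i.e.\ on $\mathbb{E}[(\Delta W_k)^2]=\Delta t_k$ and $\mathbb{E}\big[((\Delta W_k)^2-\Delta t_k)^2\big]=2(\Delta t_k)^2$, which, together with independence of increments and adaptedness, permits replacing $(\Delta W_k)^2$ by $\Delta t_k$ in the limit. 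To finish I would verify that the remainder $\sum_k R_k\to 0$, using boundedness of the derivatives and the $L^1$-bound $\mathbb{E}\sum_k(\Delta X_k)^2=O(1)$ to control the higher-order terms, and then undo the localization and elementary-process reductions by approximation. Differentiating the resulting integral identity recovers \eqref{ItoFormula}.
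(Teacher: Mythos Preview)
Your proof sketch is the standard textbook argument for It\^o's formula and is correct in outline: localization, Taylor expansion along a partition, identification of the quadratic-variation term via $\mathbb{E}[(\Delta W_k)^2]=\Delta t_k$ and $\mathrm{Var}[(\Delta W_k)^2]=2(\Delta t_k)^2$, and control of the remainders. This is exactly the approach in the references the paper cites.

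However, the paper itself does not prove this lemma at all. It is stated as a background result with references to \cite[Th.\ 4.1.2]{Oks} and \cite[Th.~5.9]{Deck}, and is used as a tool in the proof of Theorem~\ref{thm-main-st}. So there is no ``paper's own proof'' to compare against: your argument supplies what the paper simply imports from the literature.
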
 We remark here that {\eqref{StocProc}} is actually an informal way of expressing the integral equality
\begin{equation}
\label{integral}
X_{t_0+s}-X_{t_0}=\int_{t_0}^{t_0+s}\mu_1 dt + \int_{t_0}^{t_0+s} \sigma_1 dW_t, \ \ \forall t_0,s >0.  
\end{equation}
{To formulate the second prerequisite, as in \cite[Def.\ 3.1.4]{Oks}, by $\mathcal{V}(S,T)$ we denote the 
set of all $f:[0,\infty)\times\Omega \to \R$ that are measurable, $\mathcal{F}_t$-adapted and satisfy $E\big[ \int_S^T f(t,\omega)^2\,dt \big]<\infty$.
Then by \cite[Cor.\ 3.1.7]{Oks}: 
\begin{lemma}\label{IL2}  (It\^o isometry) For any $f\in \mathcal{V}(S,T)$
\begin{equation*}
E\Big[ \Big( \int_S^T f(t,\omega) dW_t \Big)^2\Big]=E\big[ \int_S^T f(s,\omega)^2 dt \Big].
\end{equation*}
\end{lemma}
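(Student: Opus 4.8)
The plan is to follow the classical two-step strategy: first verify the identity for elementary (simple) integrands, where the It\^o integral reduces to an explicit finite sum, and then pass to a general $f\in\mathcal{V}(S,T)$ by an $L^2$-approximation argument. Note first that the hypothesis $f\in\mathcal{V}(S,T)$, i.e.\ $E[\int_S^T f(t,\omega)^2\,dt]<\infty$, guarantees the right-hand side is finite, so both sides are meaningful.

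For step one I would take $\phi$ of the elementary form $\phi(t,\omega)=\sum_{j} e_j(\omega)\mathbf{1}_{[t_j,t_{j+1})}(t)$, with $S=t_0<t_1<\dots<t_N=T$ and each $e_j$ bounded and $\mathcal{F}_{t_j}$-measurable, so that by definition $\int_S^T \phi\, dW_t=\sum_j e_j(W_{t_{j+1}}-W_{t_j})$. Expanding the square and taking expectations produces a double sum $\sum_{i,j}E[e_ie_j\Delta W_i\Delta W_j]$, where $\Delta W_j:=W_{t_{j+1}}-W_{t_j}$. The off-diagonal terms vanish: for $i<j$ the variables $e_i,e_j,\Delta W_i$ are all $\mathcal{F}_{t_j}$-measurable while $\Delta W_j$ is independent of $\mathcal{F}_{t_j}$ with mean zero, so conditioning on $\mathcal{F}_{t_j}$ gives $E[e_ie_j\Delta W_i\cdot E[\Delta W_j\mid\mathcal{F}_{t_j}]]=0$. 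For the diagonal terms, $e_j^2$ is $\mathcal{F}_{t_j}$-measurable and $E[(\Delta W_j)^2\mid\mathcal{F}_{t_j}]=t_{j+1}-t_j$ by the independence and variance of the Gaussian increments, whence $E[e_j^2(\Delta W_j)^2]=E[e_j^2](t_{j+1}-t_j)$. Summing yields $E[(\int_S^T\phi\,dW_t)^2]=\sum_j E[e_j^2](t_{j+1}-t_j)=E[\int_S^T\phi^2\,dt]$, which is exactly the claimed identity for $\phi$.

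For step two, for general $f\in\mathcal{V}(S,T)$ I would invoke the standard density fact that such adapted, square-integrable integrands can be approximated by elementary functions $\phi_m$ in the norm of $L^2([S,T]\times\Omega)$, i.e.\ $E[\int_S^T(f-\phi_m)^2\,dt]\to 0$. Applying step one to the difference $\phi_m-\phi_k$ shows that $(\int_S^T\phi_m\,dW_t)_m$ is Cauchy in $L^2(\Omega)$, which is precisely how the It\^o integral $\int_S^T f\,dW_t$ is defined, as the $L^2(\Omega)$-limit of $\int_S^T\phi_m\,dW_t$. Since both $\phi\mapsto\|\phi\|_{L^2([S,T]\times\Omega)}$ and $Y\mapsto\|Y\|_{L^2(\Omega)}$ are continuous, letting $m\to\infty$ in the isometry for the $\phi_m$ transfers the equality to $f$.

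The main obstacle is not the algebra of step one, which is routine, but the density statement used in step two: approximating an arbitrary adapted $f\in\mathcal{V}(S,T)$ by elementary integrands requires a careful measure-theoretic argument that crucially preserves $\mathcal{F}_t$-adaptedness at every stage (typically bounded continuous, then bounded measurable, then general $f$, always evaluating at left endpoints so that the $e_j$ remain $\mathcal{F}_{t_j}$-measurable); if adaptedness were lost, the vanishing of the off-diagonal terms would fail. Since the paper already cites \cite[Def.\ 3.1.4]{Oks} for $\mathcal{V}(S,T)$ and \cite[Cor.\ 3.1.7]{Oks} for the isometry, I would rely on Øksendal's construction for this density step and present only the elementary-function computation in detail, emphasizing that it is this very isometry for simple functions that legitimizes the limiting definition of the integral for general $f$.
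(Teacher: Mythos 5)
Your proof is correct and is precisely the standard argument behind the result the paper simply cites without proof (Lemma~\ref{IL2} is quoted from \cite[Cor.~3.1.7]{Oks}): isometry for elementary adapted integrands via the martingale/independence structure of the increments, then transfer to general $f\in\mathcal{V}(S,T)$ by the $L^2$-density of elementary functions, which is exactly \O ksendal's construction. The only cosmetic mismatch is that the paper's introductory text describes the integral as a limit in probability of Riemann-type sums, whereas you use the $L^2(\Omega)$-limit definition; for $f\in\mathcal{V}(S,T)$ these agree, so nothing is lost.
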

}

\subsection{Proof of Theorem \ref{thm-main-st}} 
 
We seek functions $\alpha_k$, $k=1,\dots,n$ such that  \eqref{main-eq-stoch} is satisfied in the following sense (cf.\ \eqref{app-1}): Let 
\begin{equation}
\label{app-1-stoch}
u_n(t,\mx,\omega)=\sum\limits_{k=1}^n \alpha^n_k(t,\omega) e_k(\mx).
\end{equation} 
Then we require that almost surely for any $\varphi\in \mathrm{span}\{e_k\}_{k=1,\dots,n}$ we have:
\begin{equation}
\label{app-1-s}
\begin{split}
\int_M d u_n \varphi dV(\mx)&= \int_M \mff(\mx,u_n)\nabla \varphi \, dV(\mx) dt\\
& {-\int_M \Div(A_{\mx}\cdot u_n)(\nabla\vphi)\, dV(\mx) dt}
%+ \int_M \mathrm{tr}( A_{\mx}(u_n)) \tilde{H}^{\varphi} \, dV(\mx) dt 
+\int_M \Phi (\mx,u_n) \varphi dV(\mx) dW_t.
\end{split}
\end{equation} If we put here $\varphi=e_j$, $j=1,\dots, n$, using orthogonality of $\{e_k\}_{k\in \N}$, we get the following system of stochastic ODEs:
\begin{equation}
\label{system}
\begin{split}
d {{\alpha}_j^n}&=\int_M \mff(\mx,u_n)\nabla e_j(\mx) \, dV(\mx) dt\\
& {-\int_M \Div(A_{\mx}\cdot u_n)(\nabla e_j)\, dV(\mx) dt}
%+ \int_M \mathrm{tr}( A_{\mx}(u_n)) \tilde{H}^{e_j} \, dV(\mx) dt  
+\int_M \Phi (\mx,u_n) e_j(\mx) dV(\mx) dW_t.
\end{split}
\end{equation} According to \eqref{bnd-1} and since $\Phi\in C^1_0(M \times \R)$, from \cite[Theorem 5.2.1]{Oks} we infer that \eqref{system} satisfying finite initial data $\alpha_{j0}(\omega)$ (in the sense that $\alpha_{j0}(\omega)\leq C<\infty$ almost surely) has
{a unique}
globally defined {$t$-continuous} solution. In particular, if 
$u_0(\mx,\omega)=\sum\limits_{j\in \N} \alpha_{j0}(\omega) e_j(\mx)$, and we choose
\begin{equation}
\label{id-fin}
\alpha_j(0,\omega)=\alpha_{j0}(\omega), 
\end{equation} 
we obtain a sequence $(u_n)$ satisfying for every $n\in \N$ the relation \eqref{system} (see \eqref{finite}). 

\subsection*{$H^s$-estimates}

Let us now derive $H^s$-estimates, $s=1,2$, for $(u_n)$. To this end, from \eqref{system} and the It\^o lemma
we infer:
\begin{align}
\label{basic-stoch}
\frac{1}{2}d |{{\alpha}^n_j}|^2&=\int_M \mff(\mx,u_n) {(\nabla e_j) \alpha_j^n} \, dV(\mx) dt\\
&- \int_M \Div( A_\mx \cdot u_n){((\nabla e_j) \alpha_j^n)} \, dV(\mx) dt +
{\frac{1}{2}}\big[ \int_M  \Phi (\mx,u_n) e_j(\mx) dV(\mx) \big]^2 dt \nonumber  
\\&+\int_M \alpha_j^n \Phi (\mx,u_n) e_j(\mx) dV(\mx) dW_t.
\nonumber
\end{align} By summing the latter relation for $j=1,\dots,n$, we conclude from the {fact that the $e_j$ 
form a complete orthonormal system in $L^2(M)$:}
\begin{align*}
\frac{1}{2}d \|u_n(t,\cdot,\omega)\|^2_{L^2(M)}&\leq \int_M \mff(\mx,u_n)\nabla u_n(t,\mx,\omega) \, dV(\mx) dt\\
&- \int_M \Div( A_\mx \cdot u_n)( \nabla u_n(t,\mx,\omega) )\, dV(\mx) dt + \frac{1}{2}\| \Phi (\mx,u_n)\|^2_{L^2(M)}  dt  
\\&+\int_M  \Phi (\mx,u_n) u_n(t,\mx,\omega) dV(\mx) dW_t.
\end{align*} 
From here, integrating the relation over $(0,T)$ and using Lemma \ref{Lpar} (i'), we have the following estimate %(see also derivation of \eqref{ISCL-2}):
\begin{equation}
\label{ISCL-21}
\begin{split}
&\int_M \frac{  |u_n(T,\mx,\omega)|^2}{2} dV(\mx) + c  \int_0^T \int_M \|\nabla u_n\|_g^2 dV(\mx)\\& 
\leq \int_M \frac{  |u_0(\mx,\omega)|^2}{2} dV(\mx)+\int_0^T (\|\mff(\cdot,u_n)\|^2_{L^2(M)} +\| \Phi (\mx,u_n)\|^2_{L^2(M)})  dt  \\&\ \ +C \int_0^T \int_M | u_n |^2 dV(\mx) dt +  \int_0^T\int_M \Phi(\mx,u_n) u_n dV(\mx) dW_t.
\end{split}
\end{equation} 
This implies
\begin{equation*}
%\label{ISCL-21}
\begin{split}
&E\big(\int_M \frac{  |u_n(T,\mx,\cdot)|^2}{2} dV(\mx) \big) \\& 
\leq E\big( \int_M \frac{  |u_0(\mx,\cdot)|^2}{2} dV(\mx)\big)+ T (\|\sup\limits_{\lambda}|\mff(\cdot,\lambda)|\|^2_{L^2(M)} +\| \sup\limits_{\lambda}| \Phi (\mx,\lambda)| \|^2_{L^2(M)})  \\&\ \ +CE\big( \int_0^T \int_M | u_n |^2 dV(\mx) dt\big) .
\end{split}
\end{equation*} Then, using the Gronwall inequality, we get
\begin{equation*}
\begin{split}
E\big( \int_M \frac{  |u_n(T,\mx,\cdot)|^2}{2} dV(\mx) \big)& \leq e^{CT} \Big(E\big( \int_M \frac{  |u_0(\mx,\cdot)|^2}{2} dV(\mx)\big)\\&+ T (\|\sup\limits_{\lambda}|\mff(\cdot,\lambda)|\|^2_{L^2(M)} +\| \sup\limits_{\lambda}| \Phi (\cdot,\lambda)| \|^2_{L^2(M)}) \Big).  
\end{split}
\end{equation*} 
Combining this with \eqref{ISCL-21}, we finally have
\begin{align}
\label{H1}
&E\big( \int_M \frac{  |u_n(T,\mx,\omega)|^2}{2} dV(\mx) + c  \int_0^T \int_M \|\nabla u_n\|_g^2 dV(\mx) \Big)\\& 
\leq E\big( \int_M \frac{  |u_0(\mx,\cdot)|^2}{2} dV(\mx)\big)+ T (\|\sup\limits_{\lambda}|\mff(\cdot,\lambda)|\|^2_{L^2(M)} +\| \sup\limits_{\lambda}| \Phi (\mx,\lambda)| \|^2_{L^2(M)}) \nonumber \\ 
&+CT e^{CT} \Big(E\big( \int_M \frac{  |u_0(\mx,\cdot)|^2}{2} dV(\mx)\big)+ T (\|\sup\limits_{\lambda}|\mff(\cdot,\lambda)|\|^2_{L^2(M)} +\| \sup\limits_{\lambda}| \Phi (\mx,\lambda)| \|^2_{L^2(M)}) \Big).
\nonumber
\end{align}
To prove higher order estimates, we multiply \eqref{basic-stoch} by $\lambda_j^2$ and then sum the obtained expressions. Using \eqref{eq:e_j_sobolev_products}, we get: %(having in mind \eqref{H1}):
\begin{align*}
\frac{1}{2}d \| u_n(t,\cdot,\omega)\|^2_{H^1(M)}=&-\int_M \Div(\mff(\mx,u_n)) \, 
\Big( \sum\limits_{j=1}^n \lambda^2_j\alpha_j(t,\omega) e_j(\mx)\Big) \, dV(\mx) dt\\
&- \int_M \Div( A_\mx \cdot u_n)\Big( \nabla \Big(\sum\limits_{j=1}^n \lambda^2_j\alpha_j(t,\omega) e_j(\mx)\Big)\Big)\, dV(\mx) dt \\
&+ {\frac{1}{2}}\sum\limits_{j=1}^n \lambda^2_j [\int_M \Phi (\mx,u_n)e_j(\mx) dV(\mx)\big]^2 dt  
\\&+\int_M  \Phi (\mx,u_n) \sum\limits_{j=1}^n \lambda^2_j \alpha_j(t,\omega) e_j(\mx) dV(\mx) dW_t.
\end{align*}
Next, since $(I-\Delta)e_j=\lambda^2_j e_j$ we can rewrite the latter in the form
\begin{align*}
\frac{1}{2}d \| u_n(t,\cdot,\omega)\|^2_{H^1(M)}&=-\int_M \Div(\mff(\mx,u_n)) \, 
\Big( \sum\limits_{j=1}^n \lambda^2_j\alpha_j(t,\omega) e_j(\mx)\Big) \, dV(\mx) dt\\
&- \int_M \Div( A_\mx \cdot u_n)(\nabla (u_n - \Delta u_n)) \, dV(\mx) dt \\
&+{\frac{1}{2}}\sum\limits_{j=1}^n \lambda^2_j [\int_M \Phi (\mx,u_n)e_j(\mx) dV(\mx) \big]^2 dt  
\\&+\int_M  \Phi (\mx,u_n) \sum\limits_{j=1}^n \lambda^2_j \alpha_j(t,\omega) e_j(\mx) dV(\mx) dW_t.
\end{align*} 
Using the Peter-Paul inequality and again \eqref{eq:e_j_sobolev_products}, 
we have for appropriate constants $\tilde{K}$ and $k$:
\begin{align*}
\frac{1}{2}d \| u_n(t,\cdot,\omega)\|^2_{H^1(M)}& 
\leq  \tilde{K}\|\mff(\cdot,u_n)\|^2_{H^1(M)}dt+ 
\frac{k}{4}\|u_n(t,\cdot,\omega) \|^2_{H^2(M)} dt
\\
&- \int_M \Div( A_\mx \cdot u_n)(\nabla u_n )\, dV(\mx) dt 
\\&- \int_M \Div \Div(A_\mx \cdot u_n) \, \Delta u_n \, dV(\mx) dt
\\& + {\frac{1}{2}}\sum\limits_{j=1}^n \lambda^2_j [\int_M \Phi (\mx,u_n)e_j(\mx) dV(\mx) \big]^2 dt  
\\&+\int_M  \Phi (\mx,u_n) \sum\limits_{j=1}^n \lambda^2_j \alpha_j(t,\omega) e_j(\mx) dV(\mx) dW_t.
\end{align*} 
Now, we take the expectation of the latter estimate. {Then the last term vanishes and using} Lemma \ref{Lpar} (ii) we conclude
\begin{align*}
& \frac{1}{2} E\big( \| u_n(t,\cdot,\cdot)\|^2_{H^1(M)}\big)+ \tilde{k} E\big(\int_0^t \| u_n(t',\cdot,\cdot)\|^2_{H^2(M)} dt' \big) 
\\
 &\leq \frac{1}{2} E\big( \| u_{n0}(\cdot,\cdot)\|^2_{H^1(M)}\big)+ \tilde{K} E\big( \int_0^t  \|\mff(\cdot,u_n)\|^2_{H^1(M)} dt' \big)
 \\&
 + \tilde K E\big(\int_0^t \max \{ 1, \|u_n(t',\cdot,\cdot)\|^2_{H^1(M)} \,dt' \} \big)
  +E\big( \int_0^t \|\Phi(\cdot,u_n)\|^2_{H^1(M)}  dt' \big),
\end{align*}
where $\tilde{k}, {\tilde K}$ are 
positive constants depending on $k, {K}$ from Lemma \ref{Lpar}. 
According to \eqref{H1}, we see that the right-hand side of the previous expression is bounded and thus
\begin{equation}
\label{H2}
 E\big( \| u_n(t,\cdot,\cdot)\|^2_{H^1(M)}\big)+ E\big(\int_0^t \| u_n(t,\cdot,\cdot)\|^2_{H^2(M)} dt' \big) \leq C \ \ \forall t\geq 0.
\end{equation}

\subsection*{Convergence}

{Using the splitting into $\mathrm{span}\{e_1,\dots,e_n\})$ and $\overline{\mathrm{span}\{e_k\}_{k>n}}$} and \eqref{H2} 
\begin{align}
\label{H1-R}
&E\left(\|(u_n-u_m)^{\bot}_n\|_{L^2([0,T]\times M)}\right)=E\left(\|(u_m)^{\bot}_n\|_{L^2([0,T]\times M)} \right)\\
&=  E\Big( \big[\int_0^T\sum\limits_{j=n+1}^m |\alpha^m_j(t,\cdot,\cdot)|^2 dt\big]^{1/2} \Big)
 = \frac{\lambda_N}{\lambda_N}E\Big( \big[\int_0^T\sum\limits_{j=n+1}^m |\alpha^m_j(t,\cdot,\cdot)|^2 dt\big]^{1/2} \Big)\nonumber \\ 
 & \leq \frac{1}{\lambda_N}E\Big( \big[\int_0^T\sum\limits_{j=n+1}^m \lambda_j^2|\alpha^m_j(t,\cdot,\cdot)|^2 dt\big]^{1/2} \Big) \leq \frac{1}{\lambda_N}E\left( \|u_m\|_{L^2([0,T]; H^1(M))}\right) \to 0 \nonumber 
 \end{align}
 as $N\to \infty$, and analogously
 \begin{equation}\label{H2-R}
 \begin{split}
 &E\left(\|(u_n-u_m)^{\bot}_n\|_{L^2([0,T];H^1(M))}\right)=E\left(\|(u_m)^{\bot}_n\|_{L^2([0,T];H^1(M))} \right)  \\ &\leq \frac{1}{\lambda_N}E\left( \|u_m\|_{L^2([0,T]; H^2(M))}\right) \to 0
\end{split}
\end{equation}
as $N\to \infty$.

%\subsection{Continuation}

Using the above and the linearity of the parabolic term, unlike the situation from the previous section, we can establish directly that  $(u_n)$ is a Cauchy sequence in $L_{{\bf P}}^2(\Omega; L^2([0,T];H^1(M)))$. Then, we shall prove continuity of the limit with respect to $t\in [0,T]$. To this end, we subtract equations \eqref{app-1-s} for $m\geq n \geq N\in \N$ to get for any $j\leq n\leq m$
\begin{equation}
\label{system-mn}
\begin{split}
d ({\alpha}_j^n-{\alpha}_j^m)&=\int_M (\mff(\mx,u_n)-\mff(\mx,u_m))\,\nabla e_j(\mx) \, dV(\mx) dt\\
& {-\int_M \Div\big(A_{\mx}\cdot (u_n-u_m)\big)(\nabla e_j)\, dV(\mx) dt}
\\&+\int_M \big(\Phi (\mx,u_n)-\Phi (\mx,u_m) \big) e_j(\mx) dV(\mx) dW_t.
\end{split}
\end{equation} From here, according to the It\^o lemma, we have
\begin{align*}
\frac{d |{\alpha}_j^n-{\alpha}_j^m|^2}{2}&= \int_M\big(\mff(\mx,u_n)- \mff(\mx,u_m) \big)\, \nabla \big(({\alpha}_j^n-{\alpha}_j^m)e_j(\mx) \big) dV(\mx) dt \\&
-\int_M \big(\Div(A_{\mx}\cdot ( u_n- u_m)\big)\nabla \big(({\alpha}_j^n-{\alpha}_j^m)e_j(\mx) \big) \, dV(\mx) dt\\
&+\big[\int_M \big(\Phi (\mx,u_n)-\Phi (\mx,u_m) \big) e_j(\mx) dV(\mx) \big]^2 dt\\
&+\int_M \big(\Phi (\mx,u_n)-\Phi (\mx,u_m) \big) ({\alpha}_j^n-{\alpha}_j^m) e_j(\mx) dV(\mx) dW_t.
\end{align*}
Keeping in mind \eqref{system-mn}, we get after summing the latter expressions for $j=1,\dots,n$:
\begin{equation}
\label{ISCL}
\begin{split}
&\frac{1}{2} d \int_M|u_n-u_m|^2 dV(\mx)=\int_M (\mff(\mx,u_n)-\mff(\mx,u_m)) \cdot\nabla (u_n-u_m) dV(\mx) dt \\
&\hspace*{1em}-\int_M \Div(A_{\mx}(u_n-u_m))\cdot\nabla (u_n-u_m) \, dV(\mx)dt  \\
&\hspace*{2em}+\int_M {(\Phi(\mx,u_n)-\Phi(\mx,u_m))^2} dV(\mx) dt \\
&\hspace*{3em}+ \int_M (\Phi(\mx,u_n)-\Phi(\mx,u_m)) (u_n-u_m) dV(\mx) dW_t\\
&\hspace*{4em}-\int_M (\mff(\mx,u_n)-\mff(\mx,u_m))\nabla \varphi^{\bot}_n \, dV(\mx) dt\\
&\hspace*{5em}+\int_M \Div(A_{\mx}(u_n-u_m))\cdot\nabla \varphi_n^\bot \, dV(\mx)dt   \\
&\hspace*{6em}-\int_M {\big[(\Phi(\mx,u_n)-\Phi(\mx,u_m))^\bot_n\big]^2} dV(\mx) dt\\ 
&\hspace*{7em}-\int_M (\Phi (\mx,u_n)-\Phi (\mx,u_m)) \varphi^{\bot}_n dV(\mx) dW_t,
\end{split}
\end{equation} where $\varphi^{\bot}_n=(u_n-u_m)^{\bot}_n=-(u_m)^{\bot}_n$. 

Let us first consider the terms containing $dt$ in \eqref{ISCL}. We have:
\begin{align*}
&\big| \int_M (\mff(\mx,u_n)-\mff(\mx,u_m)) \cdot\nabla (u_n-u_m) dV(\mx) dt \big| \\
&\leq K_1 \|\mff'\|^2_{\infty} \|u_n(t,\cdot,\omega)-u_m(t,\cdot,\omega)\|_{L^2(M)}^2 +\frac{c}{2} \|\nabla (u_n(t,\cdot,\omega)-u_m(t,\cdot,\omega))\|^2_{L^2(M)}. 
\end{align*}
Next, from Lemma \ref{Lpar} (i'), we have
\begin{align*}
&-\int_M \Div(A_{\mx}(u_n-u_m))(\nabla (u_n-u_m)) \, dV(\mx)dt\\&=\int_M \Div\Div(A_{\mx}(u_n-u_m)) (u_n-u_m) \, dV(\mx)dt \\& \leq 
-c \|\nabla (u_n(t,\cdot,\omega)-u_m(t,\cdot,\omega))\|^2_{L^2(M)}+C\|u_n(t,\cdot,\omega)-u_m(t,\cdot,\omega)\|^2_{L^2(M)}.
\end{align*} 
Furthermore,
\begin{equation*}
|\int_M {(\Phi(\mx,u_n)-\Phi(\mx,u_m))^2} dV(\mx)| \leq \|\Phi'\|_{\infty} \|u_n(t,\cdot,\omega)-u_m(t,\cdot,\omega)\|^2_{L^2(M)}.
\end{equation*} The rest of the terms containing $dt$ tend to zero as $N\to \infty$ (and thus $n,m\to \infty$) according to \eqref{H1-R} and \eqref{H2-R}. 

 From the above, we get after integrating \eqref{ISCL} over $[0,T]$ and applying the expectation operator (which makes the terms containing $dW_t$ vanish):
\begin{equation}
\label{ISCL-1}
\begin{split}
E\Big(\int_M &\frac{  |(u_n-u_m)(T,\mx,\omega)|^2}{2} dV(\mx)\Big) + \frac{c}{2}  E\Big(\int_0^T \int_M \|\nabla (u_n-u_m)\|_g^2 dV(\mx)\Big)\\& 
\leq E\big(\int_M \frac{  |(u_{0n}-u_{0m})(\mx,\omega)|^2}{2} dV(\mx) \big)\\ 
& \hspace*{10em}+\bar{C} E\big(\int_0^T \int_M | u_n-u_m |^2 dV(\mx) dt\big)+ c_n .
\end{split}
\end{equation}
where $c_n \to 0$ as $N\to \infty$.
%
%\begin{remark}
%Notice that we can actually derive \eqref{H1} by putting $u_m=0$ above. Indeed, in this case we get the following bound from \eqref{ISCL-1}:
%
%\begin{align}
%\label{ISCL-2}
%&\int_M \frac{  |u_n(T,\mx,\omega)|^2}{2} dV(\mx) + \frac{c}{2}  \int_0^T \int_M |\nabla u_n|^2 dV(\mx)\\& 
%\leq \int_M \frac{  |u_0(\mx,\omega)|^2}{2} dV(\mx) +\bar{C} \int_0^T \int_M | u_n |^2 dV(\mx) dt +  \int_M \Phi(\mx,u_n) u_n dV(\mx) dW_t.
%\nonumber
%\end{align} From here, we use the stochastic variant of the Gronwall inequality \cite{jap} to get:
%\begin{equation}
%E\Big( \int_M |u_n(T,\mx)|^2 d\mx)^2 \Big)  \leq \bar{C}_1(\sqrt{T}+1)^2 e^{\bar{C}_2 (\sqrt{T}+1)^2 \|\Phi^2\|^2_{L^\infty(M\times \R) T^2}}.
%\end{equation}  
%\end{remark}

From here, we see that the Gronwal lemma implies:
\begin{equation}
\label{H1-2S}
E\Big( \int_M |(u_n-u_m)(T,\mx,\omega)|^2 d\mx +\int_0^T\int_M |\nabla (u_n-u_m)(t,\mx)|^2 d\mx \Big) \leq C(T)c_n,
\end{equation}  for a locally finite function $C(T)$ depending on $T$ {and again $c_n\to 0$}. 

This proves that the sequence $(u_n)$ is Cauchy  in $L_{{\bf P}}^2(\Omega ;L^2([0,T];H^1(M)))$,  i.e.\ it converges towards some 
function $u\in L_{{\bf P}}^2(\Omega ;L^2([0,T];H^1(M)))$, {which is a solution to \eqref{eq:stoch_equation}}. 
Moreover, according to \eqref{H2}, we have $u\in L_{{\bf P}}^1(\Omega ;L^2([0,T];H^2(M)))$ as well.
It remains to prove that $u\in L^2_{{\bf P}}(\Omega; C^{1/2}((0,T); L^2(M)))$.

%%%%%%%%%%%

Consider the integral formulation of the stochastic differential equation \eqref{eq:stoch_equation}. We have  for every $\varphi\in H^2(M)$:
\begin{align*}
&\int_M \left( u(t+\Delta t,\mx,\omega)-u(t,\mx,\omega) \right) \varphi(\mx)dV(\mx )= 
\int_t^{t+\Delta t} \int_M \mff(\mx,u)\nabla \varphi(\mx) \, dV(\mx) dt\\
&+\int_t^{t+\Delta t} \int_M \Div(A_{\mx}\cdot u)(\nabla\vphi)\, dV(\mx) dt+\int_t^{t+\Delta t} \int_M  \Phi (\mx,u) \varphi(\mx) dV(\mx ) dW_t.
\end{align*} 
{To proceed from here, write $u=\sum_{j\in \N}\alpha_j e_j$, 
choose $\varphi(\mx)=e_k(\mx)$} and use integration by parts to get:
\begin{align*} 
& \alpha_k(t+\Delta t,\omega)-\alpha_k(t,\omega)= -
\int_t^{t+\Delta t} \int_M \Div \mff(\mx,u)  e_k(\mx) \, dV(\mx) dt'\\
&+\int_t^{t+\Delta t} \int_M \Div(\Div ( A_{\mx}(u) ) ) \,  e_k(\mx) dV(\mx) dt'+ \int_t^{t+\Delta t} \int_M \Phi (\mx,u) e_k(\mx) dV(\mx ) dW_{t'}.
\end{align*} 
We square the latter expression, find the expectation, and use the Jensen inequality to infer:
\begin{align*}
E\Big[\big( \alpha_k(t+\Delta t,&\omega)-\alpha_k(t,\omega) \big)^2 \Big] \\
&\leq {C}
\Big( \Delta t E\Big[ \int_t^{t+\Delta t}\left( \int_M \Div f(\mx,u) e_k(\mx) dV(\mx) \right)^2\, dt'\Big] \\
&\qquad\qquad+ \Delta t E\left[ \int_t^{t+\Delta t} \left(\int_M \Div(\Div ( A_{\mx}(u) ) ) e_k(\mx) dV(\mx ) \right)^2 dt'\right]\\
&\qquad\qquad\qquad
+E\Big[\Big(\int_t^{t+\Delta t}\int_M \Phi(\mx,u)e_k(\mx)\, dV(\mx) dW_{t'} \Big)^2\Big]\Big).
\end{align*} 
We divide the expression by $\Delta t$, use here the Ito isometry (Lemma \ref{IL2}), and sum the expression over $k\in \N$. We have:
\begin{equation}
\label{last1}
\begin{split}
&E\big[\|\frac{u(t+\Delta t,\cdot,\cdot)-u(t,\cdot,\cdot)}{\sqrt{\Delta t}}\|^2_{L^2(M)}\big] \\
& \hspace*{2em}
\leq C \Big( E\big[\int_t^{t+\Delta t}\|\Div f(\cdot, u(t',\cdot,\cdot))\|^2_{L^2(M)} dt' \big]\\
& \hspace*{2em}+ E\big[ \int_t^{t+\Delta t} \|\Div(\Div ( A_{\mx}\cdot u ) )\|^2_{L^2(M)}dt'\big]
\\
&\hspace*{2em}+ E \big[\frac{1}{\Delta t} \int_t^{t+\Delta t} \|\Phi(\cdot,u(t',\cdot,\cdot)) \|^2_{L^2(M)} dt'\big]\Big)\\
&\hspace*{2em} \leq C\Big( E\big[\|  u \|^2_{L^2((0,T); H^{2}(M)}\big]+ \sup\limits_{t\in (0,T)}E\big[ \|\Phi(\cdot,u(t,\cdot,\cdot))\|^2_{L^2(M)} \big] \Big) \leq \bar{C},
\end{split}
\end{equation}
since according to \eqref{H1} and \eqref{H2} %\melanietodo{How do we get the first inequality below? I think we haven't assumed enough about f and div(f) and f', see also earlier notes}
\begin{align*}
&E\big[\int_t^{t+\Delta t}\|\Div f(\cdot, u(t',\cdot,\cdot))\|^2_{L^2(M)} dt' \big] \leq C E\big[ \|u\|^2_{L^2((0,T);H^{1}(M)}\big]\leq c <\infty  \\
&E\big[\int_t^{t+\Delta t} \|\Div(\Div ( A_{\mx}(u) ) )\|^2_{L^2(M)}\big] \leq C E\big[\|  u \|^2_{L^2((0,T);H^{2}(M)}\big] \leq c <\infty
\\
&  
E \big[\frac{1}{\Delta t} \int_t^{t+\Delta t} \|\Phi(\cdot,u(t',\cdot,\cdot)) \|^2_{L^2(M)} dt'\big] \leq \sup\limits_{t\in (0,T)}E \big[ \|\Phi(\cdot,u(t,\cdot,\cdot)) \|^2_{L^2(M)} \big] <\infty.
\end{align*}%\melanietodo{The last inequality confuses me too. Why is there still a dt'?}  Thus, by taking $\sup\limits_{\Delta t>0}$ in \eqref{last1}, we get 
\begin{equation}
\label{H-t}
%\begin{split}
E\big[\|u\|^2_{C^{1/2}([0,T);L^2(M)}\big] \leq \bar{C}<\infty \ \ \implies \ \ u\in L^2_{{\bf P}}(\Omega; C^{1/2}((0,T);L^2(M))).
%\end{split}
\end{equation} This concludes the proof.

\subsection*{Acknowledgment} This research was supported by projects P30233 and P35508 of the Austrian Science Fund FWF.


\begin{thebibliography}{10}


%\bibitem{AKR} B.~Andreianov, K.~H.~Karlsen, N.~H.~Risebro,  A theory of
%$L^1$-dissipative solvers for scalar conservation laws with
%discontinuous flux, {\em Arch.\ Ration.\ Mech.\ Anal.} {\bf 201} (2011),
%27--87.

%\bibitem{arena} O.Arena, A Strong Maximum Principle for Quasilinear Parabolic Differential Inequalities, {\em Proc.\ AMS} {\bf 72} (1972), 497--502.

%\bibitem{BV} F.~Bachman, J.~Vovelle, Existence and Uniqueness of Entropy Solution of
%Scalar Conservation Laws with a Flux Function
%Involving Discontinuous Coefficients, {\em Comm.\ PDE}, {\bf 31} (2006), 371--395.

\bibitem{AM} Abraham, R.,  Marsden, J.\ E.,
Foundations of mechanics. Second ed., Benjamin/Cummings, 1978.

\bibitem{BLf}  M.~Ben Artzi, P.~LeFloch, Well-posedness theory for geometry-compatible hyperbolic
conservation laws on manifolds, {\em Ann. I. H. Poincar\'e} {\bf 24} (2007), 989--1008.

\bibitem{benett} B.~Chow, P.~Lu, L.~Ni {\em Hamilton's Ricci Flow}, (Graduate Studies in Mathematics), American Mathematical Society 2006.

\bibitem{BoyFab} F.~Boyer, P.~Fabrie, 
Mathematical tools for the study of the incompressible Navier-Stokes equations and related models.
Applied Mathematical Sciences, 183. Springer, New York, 2013.

\bibitem{[CP]} J.~ Chazarain A.~ Piriou, {\em Introduction to the theory of linear PDEs}, 558 pages, 1982, North Holland.

%\bibitem{BCBT} M.C. Bustos, F. Concha, R. B\"urger and E.M. Tory, 
%{\em Sedimentation and Thickening: Phe\-nomenological Foundation and Mathematical Theory}, Kluwer Academic Publishers, Dor\-drecht, The Netherlands, 1999.

\bibitem{CK} Q.~G.~Chen, K.~H.~Karlsen, Quasilinear Anisotropic Degenerate Parabolic Equations with Time-Space Dependent Diffusion Coefficients, {\em Comm.\  Pure and Applied Analysis} {\bf 4} (2005), 241--266.

% \bibitem{CP} Q.~G.~Chen, B.~Perthame,  Well-Posedness for Non-Isotropic Degenerate Parabolic-Hyperbolic Equations, {\em Ann.\ I.\ H.\ Poincar\'e} {\bf 20} (2003), 645--668.

\bibitem{Deck} T.~Deck, {\em Der It\^{o}-Kalkül. Einführung und Anwendungen}, 2006, Springer.

%\bibitem{D} A.-L. Dalibard, Kinetic formulation for heterogeneous scalar conservation laws, 
%{\em Ann.\ I.\ H.\ Poincar\'e} {\bf 23} (2006), 475--498.

%\bibitem{D1} A.-L. Dalibard, Kinetic formulation for a parabolic conservation law. Application to homogenization, {\em SIAM J.\ Math.\ Anal.} {\bf 39} (2007),  891--915.

%\bibitem{Db} E.~DiBenedetto, {\em Degenerate parabolic equations}, Springer New York, 1993.

%\bibitem{vDP} C.~J.~van Duijn, L.~A.~Peletier, I.~S.~Pop, 
%A new class of entropy solutions of the Buckley-Leverett equation, 
%{\em SIAM J.\ Math.\ Anal.} {\bf 39} (2007), 507--536.

%\bibitem{Evans} L.~C.~Evans, {\em Weak convergence method in partial differential equations}, Conference Board of the Mathematical Sciences
%by the
%American Mathematical Society
%Providence, Rhode Island, Number 74, 1988.

\bibitem{EK} M.~S.~Espedal and K.~H.~Karlsen, Numerical solution of reservoir flow models based on large time step operator splitting algorithms, {\em  Filtration in Porous Media and Industrial Applications}
(Cetraro, Italy, 1998), vol.\ {\bf 1734} of Lecture Notes in Mathematics, p.\ 9--77, Springer,
Berlin, 2000.

\bibitem{GK} L.~Galimberti, K.H.Karlsen, {\em Well-posedness theory for stochastically forced conservation laws on Riemannian manifolds
}, J. Hyp. Diff. Eq.  {\bf 16} (2019), 519--593. 

%\bibitem{GKOS} M.\ Grosser, M.\ Kunzinger, M.\ Oberguggenberger, R.\
%Steinbauer, \emph{Geometric theory of generalized functions},
%Kluwer, Dordrecht, 2001. 

%\bibitem{H3} L.\ H\"ormander, {\em The Analysis of Linear Partial Differential Operators III}, Classics in Mathematics, Springer, 2007.



% \bibitem{Fed1} H.Federer, {\em Geometric measure theory}, Die Grundlehren der mathematischen Wissenschaften, Band. 153,  1969, New York: Springer-Verlag New York Inc.

\bibitem{[G]} M.~P.~Gaffney, {\em Hilbert space methods in the theory of harmonic integrals}, Transactions of AMS {\bf 78} (1955), 426--444.

\bibitem{GKM} M.~Graf, M.~Kunzinger, D.~Mitrovic, {\em Well-posedness theory for degenerate parabolic equations on Riemannian manifolds}, J. Differential Equations {\bf 263} (2017), 4787--4825


\bibitem{GKOS} M.\ Grosser, M.\ Kunzinger, M.\ Oberguggenberger, R.\
Steinbauer, \emph{Geometric theory of generalized functions},
Kluwer, Dordrecht, 2001. 

\bibitem{Gess} B.~Gess, M.~Hofmanova, {Well-posedness and regularity for quasilinear degenerate parabolic-hyperbolic SPDE}, Ann. Probab. {\bf 46} (2018), 2495--2544.


\bibitem{[H]} E.~Hebey, {Sobolev spaces on Riemannian manifolds}, 120 pages, 1996, Springer-Verlag Berlin Heidelberg.

% \bibitem{Hof} M.~Hofmanova, {\em
% Strong solutions of semilinear stochastic partial differential equations}, NoDEA Nonlinear Differential Equations Appl. {\bf 20} (3) (2013) 757--778.

\bibitem{med} D.~ Holcman, Z~ Schuss, {\em Asymptotics of Elliptic and Parabolic PDEs: and their Applications in Statistical Physics, Computational Neuroscience, and Biophysics},  (Applied Mathematical Sciences) 1st ed. 2018, Springer.


\bibitem{HP} G.~ Huisken and A.~ Polden, {\em Geometric evolution equations for hypersurfaces, Calculus of variations and geometric evolution problems}, (Cetraro, 1996), Springer Verlag, Berlin, 1999, pp. 45--84.

%\bibitem{Dpe} R.~DiPerna, Measure-valued solutions to conservation laws, {\em Archive Rat.\ 
%Mech.\ Anal.} {\bf 88} (1985), 223--270.

% \bibitem{jap} A.~Kazuo, {\em A Stochastic Gronwall inequality and its applications}, Journal of Inequalities in Pure and Applied Mathematics {\bf 6} (2005), 1--5.

\bibitem{kreut} M.\ Kreuter, {\em Sobolev Spaces of Vector-Valued Functions}, MSc.\ Thesis, University of Ulm, 2015,
{\tt https://www.uni-ulm.de/fileadmin/website\_uni\_ulm/mawi.inst.020\\
	/abschlussarbeiten/MA\_Marcel\_Kreuter.pdf}

\bibitem{MM} L.~ Martinazzi, C.~Mantegazza, {\em A note on quasilinear parabolic equations on manifolds}, Ann. Scuola Norm. Sup. Pisa. Cl. Sci. {\bf  XI}  (2012), 1--18.

\bibitem{LSU} O.~A.~Ladyzhenskaja, V.~A.~Solonnikov, N.~N.~Ural'ceva, (1968), {\em Linear and quasi-linear equations of parabolic type}, Translations of Mathematical Monographs 23, Providence, RI: American Mathematical Society.

% \bibitem{Fed} F. Ledrappier, L.S. Young, {\em The Metric Entropy of Diffeomorphisms: Part I: Characterization of Measures Satisfying Pesin's Entropy Formula}, Annals of Mathematics {\bf 122} (1985), 509--539.

\bibitem{Mar} J.\ E.\ Marsden,  Generalized Hamiltonian mechanics, Arch.\ Rat.\ Mech.\ Anal., 28(4) (1968) 323--361.

%\bibitem{LM} D.~Lengeler, T.~M\"uller, Scalar conservation laws on constant and time-dependent
%Riemannian manifolds, {\em J.\ Differential Equations} {\bf 254} (2013), 1705--1727.

\bibitem{JMN}  J.~M.~Nordbotten,  M.~A.~Celia, {\em Geological Storage of CO$_2$:
Modeling Approaches for Large-Scale Simulation}, John Wiley and
Sons, 2011.

\bibitem{bio} B.~Perthame, {\em Parabolic Equations in Biology}, 199 pages, 2013, Springer, Cham.

%\bibitem{ON83}  B. O'Neill, {\em Semi-Riemannian Geometry. With Applications to
%Relativity}. Pure and Applied Mathematics 103. Academic Press, New York, 1983.

\bibitem{polden} A.~Polden, {\em Curves and Surfaces of Least Total Curvature and Fourth-order Flows}, Ph.D. thesis, Mathematisches Institut, Univ. T\"ubingen, 1996, Arbeitsbereich Analysis Preprint Server Univ. T\"ubingen, http://poincare.mathematik.uni- tuebingen.de/mozilla/home.e.html.
%\bibitem{Pet06}  P.~Petersen, {\em Riemannian geometry}. Second edition. 
%Graduate Texts in Mathematics, 171. Springer, New York, 2006.

\bibitem{S} J.~J.~Sharples, {\em Linear and quasilinear parabolic equations in Sobolev space}, J.Differential Equations {\bf 202} (2004), 111--142.

\bibitem{[S]} R.~T.~Seeley, {\em Complex powers of an elliptic operator}, Proc.Symp.Pure Math. {\bf 10} (1967), 288--307

\bibitem{tay} M.Taylor, {\em Partial Differential Equations III}, 711 pages, 2011, Springer.

\bibitem{[T]} M.~Taylor, {\em Pseudo-differential operators}, 160 pages, 1974, Springer-Verlag Berlin Heidelberg.

\bibitem{Oks} B.Oeksendal, {\em Stochastic differential equations}, 357 pages, 2003, Springer.

\bibitem{wald} R.\ Wald,
{\em General relativity}, University of Chicago Press, Chicago, IL, 1984.

\bibitem{link}  http://www.ltcc.ac.uk/media/london-taught-course-centre/documents/Applied-Computational-Methods---Notes-4.pdf



\end{thebibliography}
\end{document}